\newcounter{ipotesi}
 \makeatletter \@addtoreset{equation}{section}
\newtheorem{theorem}{Theorem}[section]
\newtheorem{hyp}[theorem]{Hypotheses}{\rm}
{\rm}
\newtheorem{lemma}[theorem]{Lemma}
\newtheorem{proposition}[theorem]{Proposition}
\newtheorem{remark}[theorem]{Remark}{\rm}
\newtheorem{example}[theorem]{Example}
\newcounter{parentenv}
\newcommand{\R}{{\mathbb R}}
\newcommand{\N}{{\mathbb N}}
\newcommand{\Rd}{\mathbb R^d}
\newcommand{\Rm}{\mathbb R^m}
\newcommand{\T}{{\bm T}}
\newcommand{\f}{{\bm f}}
\newcommand{\bb}{{\bm b}}
\newcommand{\uu}{{\bm u}}
\newcommand{\A}{\bm{\mathcal A}}
\newcommand{\vv}{{\bm v}}
\newcommand{\q}{{\mathfrak q}}
\newcommand{\C}{\mathbb C}
\begin{document}

\title[Generation results for vector-valued elliptic operators]{Generation results for vector-valued elliptic operators with unbounded coefficients\\ in $L^p$ spaces}
\author[L. Angiuli, L. Lorenzi, E.M. Mangino and A. Rhandi]{Luciana Angiuli, Luca Lorenzi$^*$, Elisabetta M. Mangino, Abdelaziz Rhandi}\thanks{$^*$Corresponding author}
\address{L.A. \& E.M.M.:  Dipartimento di Matematica e Fisica ``Ennio De Giorgi'', Universit\`a del Salento, Via per Arnesano, I-73100 LECCE, Italy}
\address{L.L.: Dipartimento di Scienze Matematiche, Fisiche e Informatiche, Plesso di Mate\-matica, Universit\`a degli Studi di Parma, Parco Area delle Scienze 53/A, I-43124 PARMA, Italy}
\address{A.R.:  Dipartimento di Ingegneria dell'Informazione, Ingegneria Elettrica e Mate\-ma\-tica Applicata, Universit\`a degli Studi di Salerno, Via Giovanni Paolo II  132, I- 84084 FISCIANO (SA), Italy}
\email{luciana.angiuli@unisalento.it}
\email{luca.lorenzi@unipr.it}
\email{elisabetta.mangino@unisalento.it}
\email{arhandi@unisa.it}

\keywords{Vector-valued elliptic operators, unbounded coefficients, semigroups of bounded operators, improving summability properties}
\subjclass[2020]{35J47, 35K45, 47D06}

\begin{abstract}
We consider a class of vector-valued elliptic operators with unbounded coefficients, coupled up to the first-order, in the Lebesgue space $L^p(\Rd;\Rm)$
with $p \in (1,\infty)$. Sufficient conditions to prove generation results of an analytic $C_0$-semigroup $\T(t)$, together with a characterization of the domain of its generator, are given. Some results related to the hypercontractivity and the ultraboundedness of the semigroup are also established.
\end{abstract}

\maketitle

\section{Introduction}
\label{intro}
In this paper, we consider vector-valued elliptic operators with unbounded coefficients acting on smooth functions $\f:\Rd\to \R^m$ ($m \ge 2$) as follows:
\begin{equation}
\A\f= {\rm div}(Q\nabla \f)+\sum_{i=1}^d B^iD_i\f-V\f=:\A_0 \f+\sum_{i=1}^d B^iD_i\f-V\f,
\label{opA-intro}
\end{equation}
where $Q$, $B^i (i=1,\ldots,d)$ and $V$ are matrix-valued functions, and we study their realizations in the $L^p$-spaces with respect to the Lebesgue measure.
\par
In recent years, the interest on systems of elliptic and parabolic equations with unbounded coefficients has considerably grown motivated by a wide variety of mathematical models for physical and financial problems where they appear (backward-forward stochastic differential equations in connection with Nash equilibria in the theory of games, Navier Stokes equations, etc., see e.g., \cite{AALT,BGT,Dall,han-rha,Ha-He,hieber2,HRS}). Beside the analysis in spaces of bounded and continuous functions and the study of the so-called invariant measures (see e.g., \cite{AALT,AAL_Inv,AAL_Inv1,AL,DelLor11OnA}), the research on such systems of PDEs has been devoted to the $L^p$-setting with respect to the Lebesgue measure.
As it is known from the scalar case, the presence of an unbounded drift term produces additional difficulties to prove generation results of strongly continuous or analytic semigroups on the usual Lebesgue space $L^p(\Rd)$. Usually, one has to require strong conditions on the growth of the drift term or, as an alternative, to assume the existence of a dominating potential term.
The $L^p$-theory of parabolic systems with unbounded coefficients is not well developed and the literature
concerns essentially weakly coupled elliptic operators (i.e. the coupling between the equations is through a potential term), whose diffusion coefficients are
assumed to be uniformly elliptic and bounded. Moreover, often the techniques used to study these problems and prove generation results are based on perturbation methods, see \cite{HLPRS,KLMR,KMR,MR}.
The assumptions in \cite{HLPRS}  allow the drift term to grow like $|x|\log(1+|x|)$
and the potential term as $\log(1+|x|)$ as $|x|\to \infty$, and the generation result is proved via a Dore-Venni type theorem on sums of noncommuting operators,
due to Monniaux and Pr\"uss \cite{monniaux-pruss}.
This method works also in the case considered in \cite{KLMR} to prove generation results in $L^p$ spaces for vector-valued Schr\"odinger operators of the form $\A\uu={\rm div}(Q\nabla\uu)-V\uu$.
In that paper the operator is nondegenerate and the diffusion matrix $Q$ is symmetric, with entries which are bounded and continuously differentiable with bounded derivatives. The entries of the potential $V$ are locally Lipschitz continuous on $\Rd$ and satisfy the conditions $\langle V(x)\xi,\xi \rangle \ge|\xi|^2$ for every $x \in \Rd$, $\xi \in \Rm$,
and $|D_j V(-V)^{-\alpha}|\in L^\infty(\Rd)$ for some $\alpha \in [0,1/2)$.
The last assumption allows for potentials $V$ whose entries grow more than linearly at infinity. For instance the potential $V(x)= (1+|x|^r)V_0$, for every $x\in\Rd$,
where $V_0$ is an antisymmetric constant matrix and $r\in [1,2)$, is allowed. Under slightly different hypotheses on the potential $V$
(pointwise accretivity and local boundedness), generation results for the operator $\A$ as above are proved in \cite{KMR} but, differently from \cite{KLMR}, where the domain of the
$L^p$-realization of $\A$ is characterized as the intersection of the domains of diffusion and the potential terms of the operator, in \cite{KMR}
only a weak characterization of the domain is provided (in fact, the generation result is proved in the maximal domain of the realization of the operator $\A$ in $L^p(\Rd;\Rm)$).
A more general class of potentials, whose diagonal entries are polynomials of type $|x|^\alpha$ or even $|x|^r \log(1+|x|)$ as well as $e^{|x|}$, for $\alpha,r\ge 1$, is considered in \cite{MR} where the operator $\A$ is perturbed by a scalar potential $v \in W^{1,\infty}_{\rm loc}(\Rd)$ satisfying $|\nabla v|\le cv$ for some positive constant $c$. A perturbation theorem (due to Okazawa \cite{Oka} and used in \cite{MR}) works for a matrix-valued perturbation of $V$ in the $L^2$-setting (see \cite{AngLorMan}) allowing for different growth rates of the type of \cite{MR} for the diagonal entries of the potential matrix. In \cite{AngLorMan} the operator $\A$ is also perturbed by a diagonal first-order term that can grow at most linearly at infinity.

In this paper, using direct methods and suitable assumptions that depend on $p\in (1,\infty)$, we prove that the realization of $\A$ in $L^p(\Rd;\Rm)$ with domain $D_p=\{\uu\in L^p(\Rd;\Rm)\cap W^{2,p}_{\rm loc}(\Rd;\Rm): {\rm div}(Q\nabla \uu),$ $V\uu\in L^p(\Rd;\Rm)\}$ generates an analytic $C_0$-semigroup on $L^p(\Rd;\Rm)$. Our results improve all the above mentioned results: the novelty of this paper relies on the form of the operator,
where a coupling first-order term is allowed, and on the fact that the diffusion coefficients can be unbounded in $\Rd$.
Actually systems of elliptic operators coupled at the first-order are considered also in \cite{AngLorPal} with a different approach. More precisely, in such a paper conditions to extrapolate the semigroup $\T(t)$, first generated in $C_b(\Rd;\Rm)$ in \cite{AALT}, to the $L^p$-scale are provided, but with no characterization of the domain.
In general, the determination of the domain is a quite complicate issue which requires more assumptions on the coefficients and which can be simplified considerably
 assuming that the diffusion coefficients are uniformly elliptic and bounded. Instead, in the case of unbounded diffusion coefficients, already in the scalar case there are only partial results (see \cite{mpps} and the reference therein).

Our main assumptions are listed in Hypotheses \ref{hyp_0} and are inspired by those considered in \cite{mpps} where the scalar equation is studied.
We introduce an auxiliary potential $v$ that controls the matrix-valued function $V$ and, through Hypothesis \ref{hyp_0}(iv), allows to prove the regular $L^p$-dissipativity of the operator $\A$.
On the other hand, the assumption \eqref{drift-cont} and the oscillation condition in Hypothesis \ref{hyp_0}(v) are crucial to interpolate the term $\sum_{i=1}^d B_i D_i\uu$ between $\A_0\uu$ and $V\uu$ (see Remark \ref{no-small}) and consequently, assuming further \eqref{pdis}, if $p\in (1,2)$, and \eqref{pdis-1}, if $p \in [2,\infty)$, to identify the domain. Condition \eqref{cre-v} together with a bound on $\gamma$, was already used in the scalar case in \cite{davies_some,davies_some1}  to show that the domain of the Schr\"odinger  operator $-\Delta+V$ in $L^2(\Rd)$ coincides with
$W^{2,2}(\Rd)\cap\{u\in L^2(\Rd): Vu\in L^2(\Rd)\}$ both for smooth and singular potentials. There are counterexamples which show that this domain characterization  fails if \eqref{cre-v} is true with a too large $\gamma$ (see \cite[Example 3.7]{metafune}).  We point out that, already in the case of Schr\"odinger vector-valued operators, our hypotheses allow for the entries of $V$ to grow at infinity as $e^{|x|^\beta}$, for every $\beta>0$, improving the growth-rate considered in \cite{MR}, (see Example \ref{example-1}).
In addition, our results, besides giving a precise description on the domain of the generator in $L^p(\Rd;\Rm)$,
work under less restrictive assumptions than those considered in \cite{AngLorPal}.
In that paper two different set of hypotheses are considered: the first one imposes a sign on the drift term, the second one forces the matrices $B^i$ to be bounded when the diffusion coefficients are themselves bounded. In our case, neither a sign on the drift term is assigned, nor the drift has to be bounded when $Q$ is bounded. Moreover, to extrapolate the semigroup $\T(t)$ to the $L^p$-scale ($p \in [2,\infty)$), the first set of assumptions of \cite{AngLorPal} forces the quadratic form associated with the matrix-valued function $-2V-\sum_iD_iB^i$ to be bounded from above. Our hypotheses cover also cases in which the previous form is not bounded from above.

We then slightly change our main hypotheses, to make them independent of $p$ in the range $[p_0,\infty)$, for some $p_0>1$ (see Remark \ref{forallp}).
Under, this new set of assumptions, we prove the consistency of the semigroups $\T_p(t):=\T(t)$ for every $p\ge p_0$. We then show that each operator $\T(t)$ maps $L^p(\Rd;\Rm)$ into $L^q(\Rd;\Rm)$ for $p_0\le p\le q\le \infty$ (see Theorem \ref{thm_hyp}). This is done through a comparison between the semigroup $\T_{\infty}(t)$ generated by $\A$ in $C_b(\Rd;\Rm)$ (that actually coincides with that generated in $L^p(\Rd;\Rm)$) and the scalar semigroup associated to the Schr\"odinger operator $\mathcal A_v= {\rm div}(Q\nabla)-v$.
The analysis in $L^1(\Rd;\Rm)$ will be deferred to a future paper.
\medskip

\noindent
{\bf Notation.}
Let $d, m\in\N$ and let $\mathbb K=\mathbb R$ or $\mathbb K=\mathbb C$. We denote  by $\langle\cdot, \cdot\rangle$ and by $|\cdot|$, respectively, the Euclidean inner product and the norm in $\mathbb K^m$.
Vector-valued functions are displayed in bold style. Given a function $\uu: \Omega \subseteq \Rd  \rightarrow {\mathbb K}^m$, we denote by $u_k$ its $k$-th component. For every $p\in [1,\infty)$,  $L^p(\Rd, \mathbb K^m)$  denotes the classical vector valued Lebesgue space endowed with the norm
$\|\f\|_p=(\int_{\Rd} |\f(x)|^pdx)^{1/p}$.
The canonical pairing between $L^p(\Rd, \mathbb K^m)$ and $L^{p'}(\Rd, \mathbb K^m)$ ($p'$ being the index conjugate to $p$), i.e., the integral over $\Rd$ of the function
$x\mapsto \langle \uu(x), \vv(x)\rangle$ when $\uu\in L^p(\Rd, \mathbb K^m)$ and $\vv\in L^{p'}(\Rd, \mathbb K^m)$, is denoted by $\langle \uu,\vv\rangle_{p,p'}$.
For $k\in\mathbb N$, $W^{k,p}(\Rd, \mathbb K^m)$ is the classical vector valued Sobolev space, i.e., the space of all functions $\uu\in L^p(\Rd, \mathbb K^m)$ whose components have distributional derivatives up to the order $k$, which belong to $L^p(\R^d, \mathbb K)$. The norm of $W^{k,p}(\Rd, \mathbb K^m)$ is
denoted by $\|\cdot\|_{k,p}$.
If the matrices $B^i$ ($i=1,\ldots,d$) have differentiable entries, then we set ${\rm div}B(x)=\sum_{i=1}^dD_iB^i(x)$ for every $x\in\Rd$, where $D_iB^i$ is the matrix whose entries are obtained differentiating with respect to the variable $x_i$ the corresponding entries of the matrix $B^i$.
Given a $d\times d$-matrix-valued function $Q$, we denote by $\q(u,v)$ the function defined by $x\mapsto\langle Q(x)\nabla u(x),\nabla v(x)\rangle$ on
smooth enough functions $u$ and $v$. We simply write $\q(u)$ when $u=v$. Finally, given a vector-valued function $\uu$ and $\varepsilon>0$, we denote by $w_{\varepsilon}$ the scalar valued function $w_{\varepsilon}=(|\uu|^2+\varepsilon)^{1/2}$.

\section{Cores }\label{Hyp-sec}

The aim of this section is to prove vector valued versions of results about cores for elliptic operators with unbounded coefficients, in the line of those proved in \cite{ALM}. Throughout the section,  we will consider the elliptic operator $\A$ in \eqref{opA-intro} assuming that the matrix $B^i$ are diagonal, i.e., $B^i=b_iI$ for some functions $b_i:\Rd\to\R$, and we set ${\bm b}=(b_1,\ldots,b_d)$.

In the following lemma we adapt some known results  about scalar elliptic regularity  to the vector valued case.

\begin{lemma}\label{reg}
Suppose that $Q$ is locally positive definite, i.e.
$ \langle Q (x) \xi,\xi\rangle
 \geq \mu(x) |\xi|^2$ for every $x,\xi \in \Rd$ and some positive function $\mu$ such that $\inf_K\mu>0$ for every compact set $K\subset \Rd$.
Further suppose that $\uu\in L^1_{\rm loc}(\Rd;\Rm)$, $\bm{f}\in L^r_{\mathrm{loc}}(\Rd;\Rm)$  satisfy the variational formula
\begin{equation}\label{star}
\int_{\Rd}\uu\A\bm{\varphi}dx = \int_{\Rd}\f\bm{\varphi}dx, \qquad\;\,\bm{\varphi}\in C_c^\infty(\Rd;\Rm),
\end{equation}
for some $r\in (1,\infty)$. Then, the following properties hold true.
\begin{enumerate}[{\rm (i)}]
\item
If $r>d\geq 2$ and $q_{ij} \in W^{1,r}_{\mathrm{loc}}(\Rd)$, $b_j, v_{hh} \in   L^r_{\mathrm{loc}}(\Rd)$ for $i,j=1, \dots, d$, $h=1, \dots, m$, and $v_{hk}\in L^\infty_{\mathrm{loc}}(\Rd)$ for $h,k=1, \dots, m$, $h\not=k$, then $\uu\in W^{1,r}_{\mathrm{loc}}(\Rd;\Rm)$.
\item
If $q_{ij}, b_i \in C^1(\Rd)$ for $i,j=1, \dots, d$, $v_{hh} \in   L^p_{\mathrm{loc}}(\Rd)$ for $h=1,\ldots,m$ and $v_{hk}\in L^\infty_{\mathrm{loc}}(\Rd)$ for $h,k=1, \dots, m$, with $h\neq k$, then $\uu\in W^{2,p}_{\mathrm{loc}}(\Rd;\Rm)$.
\end{enumerate}
\end{lemma}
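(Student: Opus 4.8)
The plan is to reduce the system to a family of scalar equations and then to invoke the scalar elliptic regularity theory for operators with unbounded coefficients in the spirit of \cite{ALM}. Since the second- and first-order parts of $\A$ act diagonally on the components of $\f$, the only coupling being the zeroth-order term $V$, I would test \eqref{star} with $\bm\varphi=\varphi\,\e_h$, where $\varphi\in C_c^\infty(\Rd)$ and $\e_h$ is the $h$-th vector of the canonical basis of $\Rm$, and obtain
\begin{equation*}
\int_{\Rd}u_h\big(\diver(Q\nabla\varphi)+\langle\bb,\nabla\varphi\rangle-v_{hh}\varphi\big)\,dx=\int_{\Rd}\Big(f_h+\sum_{k\neq h}v_{kh}u_k\Big)\varphi\,dx
\end{equation*}
for every $\varphi\in C_c^\infty(\Rd)$ and every $h\in\{1,\dots,m\}$. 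Thus each $u_h$ is a distributional solution of a scalar equation whose operator has the same divergence-form principal part, drift $\bb$ and zeroth-order coefficient $v_{hh}$, and whose right-hand side is $g_h:=f_h+\sum_{k\neq h}v_{kh}u_k$. Since $f_h\in L^r_{\mathrm{loc}}(\Rd)$, $v_{kh}\in L^\infty_{\mathrm{loc}}(\Rd)$ for $k\neq h$, and $u_k\in L^1_{\mathrm{loc}}(\Rd)$, we have $g_h\in L^1_{\mathrm{loc}}(\Rd)$ to begin with.

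To prove (i), I would bootstrap. Assume $\uu\in L^q_{\mathrm{loc}}(\Rd;\Rm)$ for some $q\in[1,r]$; this holds for $q=1$. Then $g_h\in L^q_{\mathrm{loc}}(\Rd)$, and since $q_{ij}\in W^{1,r}_{\mathrm{loc}}(\Rd)\subseteq W^{1,q}_{\mathrm{loc}}(\Rd)$ and $b_i,v_{hh}\in L^r_{\mathrm{loc}}(\Rd)\subseteq L^q_{\mathrm{loc}}(\Rd)$, the scalar regularity result of \cite{ALM}, applied to the equation satisfied by each $u_h$, yields $\uu\in W^{1,s}_{\mathrm{loc}}(\Rd;\Rm)$ for some $s$, and, by the Sobolev embedding, this improves the exponent of local integrability of $\uu$. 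After finitely many such steps $\uu$ is locally bounded; then $g_h\in L^r_{\mathrm{loc}}(\Rd)$, and a last application of the scalar result gives $\uu\in W^{1,r}_{\mathrm{loc}}(\Rd;\Rm)$, which is assertion (i). The delicate point is to keep track of how the exponents propagate through the Sobolev embeddings and the H\"older inequalities used to control $v_{hh}u_h$ and $v_{kh}u_k$, and to check that the scalar result of \cite{ALM} is available in precisely the form required here --- in particular, with a right-hand side that is only locally $q$-summable.

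For (ii), the idea is to deduce it from (i). By part (i), whose hypotheses are met here with a suitable exponent $\rho>d$ (the $C^1$-regularity of the $q_{ij}$ and $b_i$ taking care of the higher-order coefficients), we get $\uu\in W^{1,\rho}_{\mathrm{loc}}(\Rd;\Rm)$, so that $\uu$ is locally bounded and $\nabla\uu\in L^\rho_{\mathrm{loc}}(\Rd;\Rm)$, by the Sobolev embedding. Integrating by parts in the identity above, which is now legitimate since $q_{ij},b_i\in C^1(\Rd)$ and $u_h\in W^{1,\rho}_{\mathrm{loc}}(\Rd)$, shows that $u_h$ solves, in the distributional sense,
\begin{equation*}
\diver(Q^\top\nabla u_h)=f_h+v_{hh}u_h+(\diver\bb)\,u_h+\langle\bb,\nabla u_h\rangle+\sum_{k\neq h}v_{kh}u_k=:G_h.
\end{equation*}
The right-hand side $G_h$ belongs to $L^p_{\mathrm{loc}}(\Rd)$: indeed $f_h$ and $v_{hh}u_h$ are there, $u_h$ being locally bounded; $\diver\bb$ and $\bb$ are continuous and $\nabla u_h\in L^\rho_{\mathrm{loc}}(\Rd)$ with $\rho\ge p$; and $v_{kh}\in L^\infty_{\mathrm{loc}}(\Rd)$. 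Since $Q^\top$ has $C^1$ entries, interior $W^{2,p}$-regularity for the operator $\diver(Q^\top\nabla\cdot)$ then yields $u_h\in W^{2,p}_{\mathrm{loc}}(\Rd)$, i.e., $\uu\in W^{2,p}_{\mathrm{loc}}(\Rd;\Rm)$.

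The heart of the matter is the bootstrap in (i): upgrading the very weak a priori information $\uu\in L^1_{\mathrm{loc}}(\Rd;\Rm)$ to $W^{1,r}_{\mathrm{loc}}$ in the presence of unbounded --- merely $L^r_{\mathrm{loc}}$ --- lower-order coefficients, including a zeroth-order one, forces the iteration of the scalar result with careful control of the summability exponents. Part (ii) is, by comparison, the routine passage from $W^{1,\rho}_{\mathrm{loc}}$ with $\rho>d$ to $W^{2,p}_{\mathrm{loc}}$, once all lower-order terms have been absorbed into an $L^p_{\mathrm{loc}}$ right-hand side.
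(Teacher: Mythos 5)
Your opening move --- testing \eqref{star} with $\bm{\varphi}=\varphi\,\e_h$ and reading off a scalar identity for each $u_h$ with right-hand side $f_h+\sum_{k\neq h}v_{kh}u_k$ --- is exactly the paper's (only) step; after that the paper does not re-prove anything but quotes the scalar theory: \cite[Corollary 2.10]{BKR} for assertion (i) and standard interior elliptic regularity (\cite{A}, or \cite[Theorem C.1.3]{newbook}) for assertion (ii). For (i) your bootstrap is a detour whose engine is never exhibited: you would need a precise scalar statement applicable when the solution is merely $L^1_{\rm loc}$ (equivalently, a locally finite measure) and the datum merely $L^1_{\rm loc}$, and the steps ``yields $\uu\in W^{1,s}_{\rm loc}$ for some $s$'' and ``after finitely many steps $\uu$ is locally bounded'' are precisely what has to be proved, not available tools; \cite{ALM} does not contain such a self-improving scheme --- it rests on the Bogachev--Krylov--R\"ockner regularity theory, which is designed to start from a measure solution of the double-divergence-form identity with $q_{ij}\in W^{1,r}_{\rm loc}$, $b_j,v_{hh}\in L^r_{\rm loc}$, $r>d$, and gives the $W^{1,r}_{\rm loc}$ conclusion in one shot. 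So (i) is incomplete as written, though repairable simply by citing \cite{BKR} as the paper does.

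The genuine flaw is in (ii). You apply part (i) ``with a suitable exponent $\rho>d$'', but in (ii) the diagonal potentials $v_{hh}$ are only assumed to lie in $L^p_{\rm loc}(\Rd)$ and $\f$ only in $L^r_{\rm loc}(\Rd;\Rm)$, with $p,r\in(1,\infty)$ possibly not larger than $d$; the $C^1$ smoothness of $q_{ij}$ and $b_i$ does nothing to raise the integrability of $v_{hh}$ and $f_h$, so the hypotheses of (i) are not satisfied for any exponent exceeding $d$. Consequently the local boundedness of $\uu$ and the membership $\nabla u_h\in L^{\rho}_{\rm loc}$ with $\rho\ge p$ --- on which your claims $v_{hh}u_h\in L^p_{\rm loc}$ and $\langle\bb,\nabla u_h\rangle\in L^p_{\rm loc}$, hence $G_h\in L^p_{\rm loc}$, entirely rest --- are unjustified whenever $p\le d$ or $r\le d$, which are exactly cases the lemma must cover (it is used, through Theorem \ref{thm-1}, for every $p\in(1,\infty)$). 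The paper does not route (ii) through (i) at all: it treats each $u_h$ directly as a very weak ($L^1_{\rm loc}$, distributional) solution of the scalar equation, with the zero-order and coupling terms kept as data, and invokes the scalar $L^p$ regularity theory for such solutions. Your final step --- interior $W^{2,p}$ regularity for $\mathrm{div}(Q^\top\nabla\cdot)$ with $C^1$ coefficients once $G_h\in L^p_{\rm loc}$ is known --- is fine; what is missing is a legitimate argument producing that membership.
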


\begin{proof}
Let ${\bm e}_1,\ldots,{\bm e}_m$ denote the canonical basis of $\Rm$. Writing \eqref{star} with $\bm{\varphi}=\varphi {\bf e}_h$ for some $\varphi\in C_c^\infty(\Rd)$ and $h\in\{1, \dots, m\}$ we get
\begin{eqnarray*}
\int_{\Rd}\left({\rm div}(Q\nabla \varphi)+ \langle \bb,  \nabla \varphi\rangle -v_{hh} \varphi \right) u_h  dx  = \int_{\Rd}\bigg (f_{h}+\sum_{k\neq h} v_{kh}u_k\bigg )\varphi dx.
\end{eqnarray*}
By the arbitrariness of $h$ we get both assertions from  the scalar case, by applying \cite[Corollary 2.10]{BKR} and standard elliptic regularity (see \cite{A}, or, e.g., \cite[Theorem C.1.3]{newbook}).
\end{proof}
\begin{theorem}\label{core}
Let $p\in (1,\infty)$ and assume that, for some $r>d\geq 2$,
$q_{ij}, b_j \in W^{1,r}_{\mathrm{loc}}(\Rd)$ for every $i,j=1, \dots, d$, $v_{hh} \in   L^r_{\mathrm{loc}}(\Rd)$ for every $h=1,\ldots,m$ and $v_{hk}\in L^\infty_{\mathrm{loc}}(\Rd)$ for every $h,k=1, \dots, m$, with $h\neq k$. Further, assume that $Q$ is locally uniformly elliptic and there exists a positive function $\psi \in C^1(\Rd)$ such that $\lim_{|x|\to\infty}\psi(x)=\infty$ and
\begin{equation}\label{beta1}
\frac{\langle\bb,\nabla \psi\rangle}{\psi\log \psi}\geq  -C_1\ \  {and}\ \ \frac{\langle Q\nabla \psi, \nabla \psi\rangle}{(\psi\log \psi)^2} \leq C_2
\end{equation}
for some positive constants $C_1, C_2$ and that
\begin{equation}\label{etoile-0}
p^{-1}(\mathrm{div}\bb)|\xi|^2 + \langle V \xi, \xi\rangle \geq 0
\end{equation}
in $\Rd$ for every $\xi\in \Rm$.
Then, the operator $(\A, C_c^{\infty}(\Rd;\Rm))$ is closable on $L^p(\Rd;\Rm)$ and its closure generates a
strongly continuous semigroup.
\end{theorem}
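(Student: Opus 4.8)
The plan is to establish both assertions (closability and generation) through the Lumer--Phillips theorem. Concretely, I would show that $(\A,C_c^\infty(\Rd;\Rm))$ is dissipative in $L^p(\Rd;\Rm)$ --- which yields at once its closability and the dissipativity of the closure $\bar\A$ --- and that $(\lambda I-\A)(C_c^\infty(\Rd;\Rm))$ is dense in $L^p(\Rd;\Rm)$ for every $\lambda>0$; then $\bar\A$ generates a strongly continuous semigroup (of contractions).

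\emph{Dissipativity.} For $\uu\in C_c^\infty(\Rd;\Rm)$ (complexified if necessary) and $\eps>0$, I would pair $\A\uu$ with $w_\eps^{p-2}\uu$ and integrate by parts. By the usual vector-valued $L^p$ integration-by-parts identities the divergence-form part obeys $\re\int_{\Rd}\langle\diver(Q\nabla\uu),w_\eps^{p-2}\uu\rangle\,dx\le0$ (for $p<2$ one also uses the pointwise Cauchy--Schwarz bound $\q(w_\eps)\le\sum_h\q(u_h)$); the diagonal drift gives $\re\int_{\Rd}\langle\bb\cdot\nabla\uu,w_\eps^{p-2}\uu\rangle\,dx\to-\tfrac1p\int_{\Rd}(\diver\bb)\,|\uu|^{p}\,dx$ as $\eps\to0$; and the potential gives $-\int_{\Rd}w_\eps^{p-2}\re\langle V\uu,\uu\rangle\,dx\to-\int_{\Rd}|\uu|^{p-2}\re\langle V\uu,\uu\rangle\,dx$. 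Letting $\eps\to0$ and splitting $\uu$ into its real and imaginary parts, \eqref{etoile-0} then yields
\[
\re\langle\A\uu,|\uu|^{p-2}\uu\rangle_{p,p'}\le-\int_{\Rd}|\uu|^{p-2}\Big(\tfrac1p(\diver\bb)\,|\uu|^2+\re\langle V\uu,\uu\rangle\Big)\,dx\le0 ,
\]
so $(\A,C_c^\infty(\Rd;\Rm))$ is dissipative and hence closable.

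\emph{Reduction of the range condition.} By Hahn--Banach, $(\lambda I-\A)(C_c^\infty(\Rd;\Rm))$ is dense in $L^p(\Rd;\Rm)$ provided the following uniqueness property holds: if $\vv\in L^{p'}(\Rd;\Rm)$ and $\langle(\lambda I-\A)\bm\varphi,\vv\rangle_{p,p'}=0$ for every $\bm\varphi\in C_c^\infty(\Rd;\Rm)$, then $\vv=0$. This orthogonality relation is nothing but the variational identity \eqref{star}, written for the operator obtained from $\A$ by replacing $V$ with $V+\lambda I$, with right-hand side $\f\equiv0$; since the coefficients of this operator still satisfy the hypotheses of Lemma \ref{reg}(i) (note that $\diver\bb$ and $v_{hh}+\lambda$ lie in $L^r_{\mathrm{loc}}(\Rd)$), that lemma gives $\vv\in W^{1,r}_{\mathrm{loc}}(\Rd;\Rm)$, which in turn (as $r>d$) makes $\vv$ continuous and locally bounded. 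In particular $\vv$ is a weak solution of $\diver(Q\nabla\vv)=\bb\cdot\nabla\vv+(\diver\bb)\vv+V^T\vv+\lambda\vv$, whose weak formulation may be tested against compactly supported $W^{1,r}$-functions, in particular against the functions $w_\eps^{p'-2}\vv\,\zeta_n^2$ introduced below.

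\emph{The weighted energy estimate (the main obstacle).} Here $\psi$ and \eqref{beta1} enter. For $n$ large choose $\phi_n\in C^1((0,\infty))$ with $0\le\phi_n\le1$, $\phi_n\equiv1$ on $(0,n]$, $\phi_n\equiv0$ on $[n^2,\infty)$, $\phi_n'\le0$, and $|\phi_n'(s)|\le c\,(s\log s)^{-1}$ on $[n,n^2]$ --- possible because $\int_n^{n^2}(s\log s)^{-1}\,ds=\log 2$ --- and put $\zeta_n:=\phi_n(\psi)\in C_c^1(\Rd)$, so $\zeta_n\to1$ pointwise since $\psi\to\infty$. From \eqref{beta1} and $\phi_n'\le0$,
\[
\langle Q\nabla\zeta_n,\nabla\zeta_n\rangle=\phi_n'(\psi)^2\langle Q\nabla\psi,\nabla\psi\rangle\le c^2C_2\,\one_{\{n\le\psi\le n^2\}},\qquad \langle\bb,\nabla\zeta_n\rangle=\phi_n'(\psi)\langle\bb,\nabla\psi\rangle\le cC_1\,\one_{\{n\le\psi\le n^2\}} .
\]
Testing the equation for $\vv$ against $w_\eps^{p'-2}\vv\,\zeta_n^2$ and letting $\eps\to0$: the divergence-form term produces a nonnegative contribution $G_n\ge c_{p'}\int_{\Rd}\zeta_n^2\,|\vv|^{p'-2}\sum_h\q(v_h)\,dx$ plus a cross-term controlled, via Young's inequality, by $\tfrac12G_n+C\int_{\Rd}|\vv|^{p'}\langle Q\nabla\zeta_n,\nabla\zeta_n\rangle\,dx$; the drift and zero-order terms combine (using $\diver B=(\diver\bb)I$) into $\int_{\Rd}\zeta_n^2\,|\vv|^{p'-2}\big(\tfrac1p(\diver\bb)|\vv|^2+\langle V\vv,\vv\rangle\big)\,dx\ge0$ by \eqref{etoile-0}, plus a cross-term $-\tfrac2{p'}\int_{\Rd}\zeta_n|\vv|^{p'}\langle\bb,\nabla\zeta_n\rangle\,dx$. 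Collecting everything, absorbing the first cross-term into $G_n$ and invoking the two displayed bounds, one is left with
\[
\lambda\int_{\Rd}\zeta_n^2\,|\vv|^{p'}\,dx\le C\int_{\{\psi\ge n\}}|\vv|^{p'}\,dx ,
\]
and the right-hand side tends to $0$ as $n\to\infty$ because $\psi\to\infty$ and $\vv\in L^{p'}(\Rd;\Rm)$. Fatou's lemma then gives $\|\vv\|_{p'}^{p'}\le\liminf_n\int_{\Rd}\zeta_n^2|\vv|^{p'}\,dx=0$, i.e.\ $\vv=0$. Hence $(\lambda I-\A)(C_c^\infty(\Rd;\Rm))$ is dense for all $\lambda>0$, and Lumer--Phillips concludes the argument. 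The delicate point is precisely this last estimate: the double-logarithmic weights in \eqref{beta1} are exactly what make $\langle Q\nabla\zeta_n,\nabla\zeta_n\rangle$ and $\langle\bb,\nabla\zeta_n\rangle$ uniformly bounded with support escaping to infinity, and the one-sided lower bound on $\langle\bb,\nabla\psi\rangle$ suffices only because $\phi_n'\le0$ turns it into the \emph{upper} bound on $\langle\bb,\nabla\zeta_n\rangle$ that has the right sign in the computation; a secondary technical issue is securing enough local regularity of $\vv$ to justify the integrations by parts, which is the role of $r>d$ and of Lemma \ref{reg}.
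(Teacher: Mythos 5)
Your proposal is correct and follows essentially the same route as the paper: dissipativity from \eqref{etoile-0} gives closability and, via Lumer--Phillips, it remains to show the range condition, which you establish exactly as in the paper by taking an annihilating $\vv\in L^{p'}$, upgrading its regularity through Lemma \ref{reg}, and testing the resulting variational identity with $\zeta_n^2\vv w_\eps^{p'-2}$ for logarithmic cutoffs built from $\psi$ so that \eqref{beta1} makes $\q(\zeta_n)$ and $\langle\bb,\nabla\zeta_n\rangle$ uniformly controlled, while \eqref{etoile-0} (with the coefficient $1/p$, correctly identified on the dual side) kills the zero-order contribution. The only deviations are cosmetic: your cutoff $\phi_n(\psi)$ with $|\phi_n'(s)|\le c(s\log s)^{-1}$ plays the role of the paper's $\zeta(n^{-1}\log\psi)$, and you conclude via Fatou and the vanishing of $\int_{\{\psi\ge n\}}|\vv|^{p'}$ where the paper uses dominated convergence.
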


\begin{proof}
First, we observe that, thanks to \eqref{etoile-0}, $(\A, C_c^{\infty}(\Rd;\Rm))$ is dissipative. Hence, by \cite[Proposition 3.14]{engnagel} we deduce that $(\A, C_c^{\infty}(\Rd;\Rm))$ is closable on $L^p(\Rd;\Rm)$. Then, we have only to show that $(\lambda I- \A)(C_c^\infty(\Rd;\Rm))$ is dense in $L^p(\Rd;\Rm)$ for some $\lambda >0$.
Since $\psi(x)$ tends to $\infty$ as $|x|\to\infty$, without loss of generality we can assume  that $\psi(x)\ge 1$ for every $x\in\Rd$.

Fix $\lambda >0$ and let $\uu\in L^{p'}(\Rd;\Rm)$ be such that $\langle \lambda \bm{\varphi} - \A\bm{\varphi}, \uu\rangle_{p,p'}=0$ for every $\bm{\varphi}\in C_c^\infty(\Rd;\Rm)$.

By Lemma \ref{reg}, $\uu\in W^{1,r}_{\mathrm{loc}}(\Rd;\Rm)$ and therefore, for every $\bm{\varphi} \in C_c^\infty(\Rd;\Rm)$,
\begin{align}\label{parti1}
\lambda \int_{\Rd} \langle \bm{\varphi} , \uu \rangle dx = \int_{\Rd}\bigg(\sum_{i=1}^d [b_i\langle D_i\bm{\varphi}, \uu\rangle-\mathfrak{q}(\varphi_i,u_i)]-\langle V\bm{\varphi},\uu\rangle \bigg)dx.
\end{align}

The equality \eqref{parti1} extends by density to every function $\bm{\varphi} \in W^{1,r'}(\Rd;\Rm)$ with compact support. In particular, since $r'< 2$, formula \eqref{parti1}  holds true for every $\bm{\varphi} \in W^{1,2}_{\mathrm{loc}}(\Rd;\Rm)$ with compact support.

Fix a smooth decreasing function $\zeta:[0,\infty)\to [0,1]$ such that $\zeta(s)=1$ if $s\in [0,1]$ and $\zeta(s)=0$ if $s\geq 2$, and set $\zeta_n=\zeta(n^{-1}\log \psi)$. Clearly $\zeta_n\in C_c^\infty(\Rd;\Rm)$ and $\lim_{n\to\infty}\zeta_n(x)=1$ for every $x\in\Rd$.

Since $\uu\in W^{1,r}_{\mathrm{loc}}(\Rd;\Rm)$ with $r>d$, it follows from the Sobolev embedding theorem that $\bm{\varphi}_n= \zeta_n^2 \uu w_{\varepsilon}^{p'-2}$ belongs to $W^{1,2}_{\mathrm{loc}}(\Rd;\Rm)$
and has compact support. Hence, writing \eqref{parti1} with $\bm\varphi=\bm\varphi_n$, we get
\begin{align}
\lambda \int_{\Rd} |\uu|^2w_{\varepsilon}^{p'-2}\zeta_n^2 dx
=&-\sum_{i=1}^m\int_{\Rd}{\mathfrak q}(\zeta_n^2u_i w_{\varepsilon}^{p'-2},u_i)dx
-\int_{\Rd}\langle V\uu,\uu\rangle \zeta_n^2w_{\varepsilon}^{p'-2}dx\notag\\
&+\int_{\Rd} \sum_{i=1}^d\sum_{j=1}^m b_i D_i (\zeta_n^2u_j w_{\varepsilon}^{p'-2} )u_j dx\notag\\
=&\!:{\mathscr I}_1-\int_{\Rd}\langle V\uu, \uu\rangle\zeta_n^2w_{\varepsilon}^{p'-2}dx+{\mathscr I}_2.
\label{dom}
\end{align}
Taking into account that $D_iw_{\varepsilon}=w_{\varepsilon}^{-1}\langle D_i\uu,\uu\rangle$ for every $i=1,\ldots,d$, it is easy to check that
\begin{align}
{\mathscr I}_1
=&-\sum_{i=1}^m \int_{\Rd} \q(u_i)\zeta_n^2 w_{\varepsilon}^{p'-2}dx-\int_{\Rd}\q(\zeta_n^2,w_{\varepsilon})w_{\varepsilon}^{p'-1}dx\notag\\
&-(p'-2)\int_{\Rd} \q(w_{\varepsilon})\zeta_n^2 w_{\varepsilon}^{p'-2}dx.
\label{I1-dom}
\end{align}
Moreover, applying H\"older's inequality we can estimate
\begin{align}
\bigg |\int_{\Rd} \q(\zeta_n^2,w_{\varepsilon})w_{\varepsilon}^{p'-1}dx\bigg |
\le \delta\int_{\Rd}  \q(w_{\varepsilon})w_{\varepsilon}^{p'-2}\zeta_n^2dx+
\frac{1}{\delta} \int_{ \Rd} \q(\zeta_n)w_{\varepsilon}^{p'} dx
\label{I1-dom-1}
\end{align}
for every $\delta>0$. Finally, observing that
\begin{eqnarray*}
\sum_{j=1}^m\nabla (\zeta_n^2u_jw_{\varepsilon}^{p'-2})u_j=\nabla (\zeta_n^2|\uu|^2w_{\varepsilon}^{p'-2})-\frac{1}{p'}\nabla(\zeta_n^2w_{\varepsilon}^{p'})+\frac{1}{p'}w_{\varepsilon}^{p'}\nabla (\zeta_n^2)
\end{eqnarray*}
and integrating by parts we obtain
\begin{align}
{\mathscr I}_2=& - \frac{1}{ p}\int_{\Rd}({\rm div}\bb)|\uu|^2 \zeta_n^2w_{\varepsilon}^{p'-2}dx+\frac{\varepsilon}{p'}\int_{\Rd}({\rm div}\bb) \zeta_n^2w_{\varepsilon}^{p'-2}dx\notag\\
&+\frac{1}{p'} \int_{\Rd} \langle \bb, \nabla \zeta_n^2\rangle w_{\varepsilon}^{p'}dx.
\label{I2-dom}
\end{align}

Replacing \eqref{I1-dom} and \eqref{I2-dom} in formula \eqref{dom} and taking \eqref{I1-dom-1} (with $\delta<p'-2$) and \eqref{etoile-0} into account, we get
\begin{align}
\lambda \int_{\Rd} |\uu|^2w_{\varepsilon}^{p'-2}\zeta_n^2 dx
\le &\frac{1}{\delta}\int_{\Rd} \q(\zeta_n)w_{\varepsilon}^{p'} dx+\frac{\varepsilon}{p'}\int_{\Rd}({\rm div}\bb) \zeta_n^2w_{\varepsilon}^{p'-2}dx\notag\\
&+\frac{1}{p'}\int_{\Rd}\langle \bb, \nabla \zeta_n^2 \rangle w_{\varepsilon}^{p'}dx\notag\\
\leq& \frac{1}{\delta n^2} \int_{\Rd} \frac{\q(\psi)}{\psi^2}[\zeta'(n^{-1} \log\psi)]^2 w_{\varepsilon}^{p'} dx
+\frac{\varepsilon}{p'}\int_{\Rd}({\rm div}\bb) \zeta_n^2w_{\varepsilon}^{p'-2}dx\notag\\
&+\frac{2}{p'n}\int_{\Rd} \frac{\langle \bb, \nabla \psi\rangle}{\psi} \zeta'(n^{-1} \log\psi) \zeta_n w_{\varepsilon}^{p'}dx.
\label{rovato}
\end{align}

Note that the second to last integral in \eqref{rovato} converges to $0$ as $\varepsilon \to 0$. Indeed, for each $\varepsilon \in (0,1)$ we can estimate
\begin{align*}
\varepsilon \int_{\Rd}({\rm div}\bb) \zeta_n^2w_{\varepsilon}^{p'-2}dx
\le \varepsilon^{\frac{p'\wedge 2}{2}}\int_{\Rd}|{\rm div}\bb|\,w_1^{(p'-2)^+} \zeta_n^2dx,
\end{align*}
which vanishes as $\varepsilon \to 0^+$ since the function $\zeta_n$ is compactly supported in $\Rd$ and the functions ${\rm div} \bb$, $w_1$ are, respectively, locally integrable and locally bounded on $\Rd$.
Here, $(p'-2)^+$ denotes the positive part of $p'-2$.
Hence, letting $\varepsilon \to 0$ and using the dominated convergence theorem, we deduce that
\begin{align}
\lambda \int_{\Rd} |\uu|^{p'}\zeta_n^2 dx
\leq& \frac{1}{\delta n^2} \int_{\Rd} \frac{\q(\psi)}{\psi^2}[\zeta'(n^{-1} \log\psi)]^2 |\uu|^{p'} dx\notag\\
&+\frac{2}{p'n}\int_{\Rd} \frac{\langle \bb, \nabla \psi\rangle}{\psi} \zeta'(n^{-1} \log\psi) \zeta_n |\uu|^{p'}dx.
\label{rovato1}
\end{align}

Since, for every $n\in\N$, $\zeta'(n^{-1}\log \psi(x))\neq 0$ only if $ 1\leq n^{-1}\log \psi(x) \leq 2$, taking \eqref{beta1} into account we can estimate
\begin{align*}
\bigg |\frac{1}{\delta n^2} \frac{\q(\psi)}{\psi^2}[\zeta'(n^{-1} \log\psi)]^2|\uu|^{p'} \bigg |\le \frac{4\q(\psi)}{\delta\psi^2 \log^2 \psi} \|\zeta'\|_{\infty}^2|\uu|^{p'}
\le 4C_2\delta^{-1}\|\zeta'\|_{\infty}^2|\uu|^{p'}.
\end{align*}
Moreover, since $\zeta'\leq 0$ on $[0,\infty)$, it follows that
\begin{eqnarray*}
\frac{1}{p'n}\int_{\Rd}\frac{\langle \bb, \nabla \psi\rangle}{\psi} \zeta'(n^{-1} \log\psi) \zeta_n|\uu|^{p'}dx \leq\frac{2C_1}{p'}\|\zeta'\|_{\infty}\int_{\Rd}|\uu|^{p'}dx.
\end{eqnarray*}
Hence, by dominated convergence we can let $n$ tend to $\infty$ in both sides of \eqref{rovato1} and conclude that
$\lambda\|\uu\|_{p'}\leq 0$, whence $\uu={\bm 0}$.
\end{proof}

\begin{theorem}
\label{thm-1}
Let $p\in (1,\infty)$ and assume that
$q_{ij}, b_i \in C^1(\Rd)$ for every $i,j=1,\ldots,d$, $v_{hh} \in   L^p_{\mathrm{loc}}(\Rd)$ for every $h=1,\ldots,m$ and $v_{hk}\in L^\infty_{\mathrm{loc}}(\Rd)$ for every $h,k=1, \dots, m$ with $h\not=k$. Further, assume that
$Q$ is locally uniformly elliptic and that there exists a positive function $\psi \in C^1(\Rd)$, which diverges to $\infty$ as $|x|$ tends to $\infty$, such that
\begin{eqnarray*}
\left|\frac{\langle\bb,\nabla \psi\rangle}{\psi\log \psi}\right|\leq  C_1\;\,  \mathrm {and}\;\, \frac{\langle Q\nabla \psi,\nabla \psi\rangle}{(\psi\log \psi)^2}\le C_2
\end{eqnarray*}
for some constants $C_1,C_2>0$. Finally, assume that condition \eqref{etoile-0} holds true.
Then, the realization $\bm A_p$ of the operator $\A$ in $L^p(\Rd,\R^m)$, with domain
$D_{p,{\rm max}}=\{ \uu\in L^p(\Rd;\Rm)\cap W_{\mathrm{loc}}^{2,p}(\Rd;\Rm): \A\uu\in L^p(\Rd;\Rm)\}$,
generates a contraction semigroup in $L^p(\Rd;\Rm)$. Moreover, the space $C_c^\infty(\Rd;\Rm)$ is a core for $(\A, D_{p,{\rm max}})$.
Finally, if ${\bm b}$ identically vanishes on $\Rd$, then, the previous semigroups exist for every $p\in (1,\infty)$ and are consistent.
\end{theorem}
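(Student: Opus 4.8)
The plan is to bring $(\A,D_{p,\mathrm{max}})$ into the scope of the Lumer--Phillips theorem, then to recognize $C_c^\infty(\Rd;\Rm)$ as a core, and finally to deal with the consistency assertion when $\bb\equiv{\bm 0}$. The operator is densely defined because $C_c^\infty(\Rd;\Rm)\subset D_{p,\mathrm{max}}$. To show it is \emph{dissipative} I would run, for a generic $\uu\in D_{p,\mathrm{max}}$ in place of the function of $L^{p'}(\Rd;\Rm)$ used in the proof of Theorem \ref{core}, the computation carried out there: since $\uu\in W^{2,p}_{\mathrm{loc}}(\Rd;\Rm)$ by definition of $D_{p,\mathrm{max}}$, I may pair the identity $\lambda\uu-\A\uu=\f$ with $\zeta_n^2\uu w_\varepsilon^{p-2}$ (with $\zeta_n$ the cut-offs of that proof) and integrate by parts; absorbing the term containing $\q(\zeta_n)$ via the bound $\q(\psi)\le C_2(\psi\log\psi)^2$, discarding the nonnegative quantity $\int_{\Rd}(p^{-1}(\mathrm{div}\bb)|\uu|^2+\langle V\uu,\uu\rangle)\zeta_n^2 w_\varepsilon^{p-2}\,dx$ supplied by \eqref{etoile-0}, and letting $\varepsilon\to0$ and then $n\to\infty$ --- the surviving error terms vanishing by the bound on $|\langle\bb,\nabla\psi\rangle|(\psi\log\psi)^{-1}$, as in the proof of Theorem \ref{core} --- I obtain $\lambda\|\uu\|_p\le\|\lambda\uu-\A\uu\|_p$.

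The substantive step is to prove that $\lambda_0 I-\A$ maps $D_{p,\mathrm{max}}$ \emph{onto} $L^p(\Rd;\Rm)$ for some $\lambda_0>0$; combined with dissipativity and density, the Lumer--Phillips theorem then gives that $(\A,D_{p,\mathrm{max}})$ generates a contraction $C_0$-semigroup (in particular it is closed). Given $\f\in L^p(\Rd;\Rm)$, for each $R>0$ I would take the solution $\uu_R\in W^{2,p}(B_R)\cap W^{1,p}_0(B_R)$ of the Dirichlet problem $\lambda_0\uu_R-\A\uu_R=\f$ in $B_R$ (classical $L^p$-theory on the bounded domain $B_R$, where $Q$ is uniformly elliptic and the lower-order coefficients have the required local regularity). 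The localized form of the dissipativity estimate --- no cut-off at infinity is needed, $\uu_R$ having zero trace on $\partial B_R$ --- gives $\|\uu_R\|_{L^p(B_R)}\le\lambda_0^{-1}\|\f\|_p$ uniformly in $R$, and interior $L^p$-elliptic estimates then provide $W^{2,p}$-bounds on each fixed ball, uniform in $R$. Extracting a sequence with $\uu_{R_k}\rightharpoonup\uu$ weakly in $W^{2,p}_{\mathrm{loc}}(\Rd;\Rm)$ and strongly in $W^{1,p}_{\mathrm{loc}}(\Rd;\Rm)$, the limit $\uu$ lies in $L^p(\Rd;\Rm)$ with $\|\uu\|_p\le\lambda_0^{-1}\|\f\|_p$, belongs to $W^{2,p}_{\mathrm{loc}}(\Rd;\Rm)$ and solves $\lambda_0\uu-\A\uu=\f$; then $\A\uu=\lambda_0\uu-\f\in L^p(\Rd;\Rm)$, so $\uu\in D_{p,\mathrm{max}}$, as required. (When $p>d\ge2$ one may instead read off generation on $\overline{C_c^\infty(\Rd;\Rm)}$ from Theorem \ref{core} with $r=p$ and then identify the domain via Lemma \ref{reg}(ii).)

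Once generation holds, $C_c^\infty(\Rd;\Rm)$ is a core as soon as $(\lambda I-\A)(C_c^\infty(\Rd;\Rm))$ is dense in $L^p(\Rd;\Rm)$ for some $\lambda>0$; equivalently, every $\uu\in L^{p'}(\Rd;\Rm)$ with $\langle\lambda\bm\varphi-\A\bm\varphi,\uu\rangle_{p,p'}=0$ for all $\bm\varphi\in C_c^\infty(\Rd;\Rm)$ must vanish --- which is precisely the conclusion of the proof of Theorem \ref{core} (there only the one-sided bound \eqref{beta1} is used, and it follows here from the assumed two-sided one). Finally, if $\bb\equiv{\bm 0}$ then \eqref{beta1} and \eqref{etoile-0} no longer involve $p$, so the construction above produces a contraction semigroup $\T_p(t)$ on $L^p(\Rd;\Rm)$ for every $p\in(1,\infty)$. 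Consistency follows since the Dirichlet problem $\lambda\uu_R-\A\uu_R=\f$ on $B_R$ has a unique solution, independent of the ambient $L^p$-space: for $\f\in C_c^\infty(\Rd;\Rm)$ --- which is dense in $L^p(\Rd;\Rm)\cap L^q(\Rd;\Rm)$ for the norm $\|\cdot\|_p+\|\cdot\|_q$ --- both $R(\lambda,\bm A_p)\f$ and $R(\lambda,\bm A_q)\f$ are $L^1_{\mathrm{loc}}$-limits of the same sequence $(\uu_R)$ (the full sequence, by uniqueness of the limit), hence coincide; by density and the exponential formula, $\T_p(t)=\T_q(t)$ on $L^p(\Rd;\Rm)\cap L^q(\Rd;\Rm)$.

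The main obstacle is the surjectivity step: one has to produce uniform a priori bounds for the approximating Dirichlet problems --- the right tool being the localized $L^p$-dissipativity estimate --- and then pass to the limit, checking that the function obtained by compactness has the $W^{2,p}_{\mathrm{loc}}$-regularity needed to belong to $D_{p,\mathrm{max}}$; this is exactly where the interior elliptic regularity underlying Lemma \ref{reg}(ii) enters.
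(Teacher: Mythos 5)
Your route (Lumer--Phillips applied directly to $(\A,D_{p,{\rm max}})$: dissipativity on the maximal domain plus surjectivity via Dirichlet problems on balls) is genuinely different from the paper's, but both of its load-bearing steps have gaps as written. The first and most serious one is the dissipativity estimate on $D_{p,{\rm max}}$. For $\uu\in D_{p,{\rm max}}$ you only know $\uu\in L^p(\Rd;\Rm)\cap W^{2,p}_{\rm loc}(\Rd;\Rm)$, and the test function $\zeta_n^2\uu w_{\varepsilon}^{p-2}$ need not be admissible for the integration by parts you perform: for $p>2$ the weight $w_{\varepsilon}^{p-2}$ is not locally bounded unless $\uu$ is, and even for $p<2$ the gradient term $w_{\varepsilon}^{p-2}\nabla\uu$ need not lie in $L^{p'}_{\rm loc}$ (e.g. when $p<2d/(d+1)$, Sobolev embedding of $W^{2,p}_{\rm loc}$ does not reach $W^{1,p'}_{\rm loc}$). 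In the paper's proof of Theorem \ref{core} the analogous computation is legitimate only because the function being tested is first upgraded, via Lemma \ref{reg}(i), to $W^{1,r}_{\rm loc}$ with $r>d$, hence is locally bounded, which is exactly what makes $\bm\varphi_n=\zeta_n^2\uu w_{\varepsilon}^{p'-2}$ an admissible $W^{1,2}_{\rm loc}$ test function. This is why the paper never runs an energy estimate on maximal-domain functions: injectivity of $\lambda I-\A$ on $D_{p,{\rm max}}$ is obtained purely by duality, pairing $\uu$ against $C_c^\infty$ test functions and using that $C_c^\infty(\Rd;\Rm)$ is a core for the adjoint $\A^*$ in $L^{p'}(\Rd;\Rm)$ (this is precisely where the two-sided bound on $\langle\bb,\nabla\psi\rangle/(\psi\log\psi)$ is needed, the adjoint having drift $-\bb$ up to zero-order corrections), and then $D=D_{p,{\rm max}}$ follows from surjectivity of $\lambda I-\overline\A$ on the closure's domain. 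You must either adopt this duality argument or supply a nontrivial approximation argument justifying your test functions; the proposal does neither.

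The surjectivity step has its own problems and is, in fact, dispensable. With $v_{hh}$ only in $L^p_{\rm loc}(\Rd)$, the map $\uu\mapsto V\uu$ does not send $W^{2,p}(B(0,R))$ into $L^p(B(0,R))$ when $p\le d/2$ (no embedding into $L^\infty$), so ``classical $L^p$-theory on the bounded domain'' does not produce the approximating solutions $\uu_R$; moreover the uniform interior $W^{2,p}$ bounds needed for your compactness argument would require local $L^p$ control of $V\uu_R$, which does not follow from the global bound $\|\uu_R\|_{L^p}\le\lambda_0^{-1}\|\f\|_p$. One would have to truncate $V$ and redo the limiting analysis, i.e., essentially reprove results of the type of Theorem \ref{core} on balls. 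Note also that once you invoke the range-density argument of Theorem \ref{core} for the core property (as you do), the closure of $(\A,C_c^\infty(\Rd;\Rm))$ is already m-dissipative, and an m-dissipative operator has no proper dissipative extensions; hence the exhaustion construction would add nothing beyond what dissipativity of $(\A,D_{p,{\rm max}})$ --- the step with the gap --- already yields. The same caveat affects your consistency argument, which rests on the ball approximations; the paper instead proves consistency for $\bb\equiv{\bm 0}$ via the Trotter product formula when $p,q\ge 2$ and by duality with the adjoint semigroup when $p,q\le 2$.
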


\begin{proof}
Let $(\overline\A, D)$ be the  closure of $(\A, C_c^\infty(\Rd;\Rm))$ in  $L^p(\Rd;\Rm)$ and fix $\uu\in D$.
Then, there exists a sequence $(\uu_n)$ in $C_c^\infty(\Rd;\Rm)$ such that
$\uu_n$ and $\A\uu_n$ converge, respectively to some function $\uu$ and $\bm{g}$ in $L^p(\Rd;\Rm)$, as $n$ tends to $\infty$.
Hence, for every $\bm{\varphi}\in C_c^\infty(\Rd;\Rm)$ it follows that
\begin{eqnarray*}
\int_{\Rd} \langle \uu, \A^* \bm{\varphi}\rangle dx =  \int_{\Rd}\langle \bm{g}, \bm{\varphi} \rangle dx,
\end{eqnarray*}
where $\A^*$ is the formal adjoint to the operator $\A$, which implies that $\overline \A \uu=\ \bm{g} =\A\uu$ distributionally. By standard elliptic regularity results,  we deduce that $\uu\in D_{p,\max}$.

Let us now prove that $\lambda I- \A$ is injective on $D_{p,{\rm max}}$ for some $\lambda >0$. For this purpose, we fix $\uu\in D_{p,\max}$ such that $\lambda \uu=\A\uu$. Then, for every $\bm{\varphi}\in C_c^\infty(\Rd;\Rm)$,
it holds that
$\langle \uu, \lambda \bm{\varphi} - \A^*  \bm{\varphi}  \rangle_{p,p'} = \langle \lambda \uu -\A \uu, \bm{\varphi}\rangle_{p,p'}=0$.
Since $C_c^\infty(\Rd;\Rm)$ is a core for $\A^*$ in $L^{p'}(\Rd;\Rm)$, due to Theorem \ref{core}, we conclude that $\uu={\bf 0}$.

Next, we fix a function $\uu \in D_{p,\max}$ and set $\bm{v}=\lambda\uu- \A\uu$.   By Theorem \ref{core}, ${\bm v}= \lambda \bm{w} - \overline{\A} \bm{w} = \lambda \bm{w} - \A \bm{w} $ for some $\bm{w}\in D\subseteq D_{p.\max}$. By the injectivity of $\A$ on $D_{p,\max}$ we get that $\uu={\bm w} \in D$.

Finally, let us assume that ${\bm b}$ identically vanishes on $\Rd$. To prove that the semigroups generated by the operators ${\bm A}_p$ and ${\bm A}_q$
are consistent, one can take advantage of the Trotter product formula (see \cite[Corollary III.5.8]{engnagel}) to write
\begin{eqnarray*}
e^{t{\bm A}_r}\f=\lim_{n\to\infty}\big (e^{\frac{t}{n}{\bm A}_r^0}e^{-\frac{t}{n}{\bm V}_r}\big )^n\f,\qquad\;\,\f\in L^r(\Rd;\R^m),
\end{eqnarray*}
for every $t>0$ and $r\in\{p,q\}$ if $p,q\ge 2$, where $e^{-t{\bm V}_r}$ is the strongly semigroup in $L^r(\Rd;\Rm)$ generated by
the multiplication operator $\uu\mapsto -{\bm V}_r\uu$, with $D({\bm V}_r)=\{\uu\in L^r(\Rd;\R^m): V\uu\in L^r(\Rd;\R^m)\}$ and
$e^{t{\bm A}_r^0}\f=(e^{tA_r^0}f_1,\ldots,e^{tA_r^0}f_m)$ for every $t>0$ and $\f\in L^r(\Rd;\R^m)$, where $e^{tA_r^0}$ is the scalar semigroup generated
by the operator ${\rm div}(Q\nabla )$. Both the semigroups $e^{tA_r^0}$ and $e^{-t{\bm V}_r}$ are consistent on the $L^p$-scale.
If $p,q\in (1,2]$, then we observe that the operator $\A^*$ adjoint to $\A$ satisfies the same assumptions as the operator $\A$.
Therefore, for $r\in\{p,q\}$, the semigroup $e^{t{\bm A}_r}$ is the adjoint of the semigroup generated in $L^{r'}(\Rd;\R^m)$ by the closure of the operator $(\A^*,C^{\infty}_c(\Rd;\R^m))$. Denote by $\T_{r'}(t)$ this semigroup. Then, for every $t>0$, $\f\in L^p(\Rd;\R^m)\cap L^q(\Rd;\R^m)$ and $\bm\varphi\in C^{\infty}_c(\Rd;\R^m)$, we can write
\begin{eqnarray*}
\langle e^{t{\bm A}_p}\f,\bm{\varphi}\rangle_{p,p'}=\langle\f,\T_{p'}(t)\bm{\varphi}\rangle_{p,p'}
=\langle\f,\T_{q'}(t)\bm{\varphi}\rangle_{q,q'}
=\langle e^{t{\bm A}_q}\f,\bm{\varphi}\rangle_{q,q'}
\end{eqnarray*}
and the equality $e^{t{\bm A}_p}\f=e^{t{\bm A}_q}\f$ follows. Finally, if $p<2$ and $q>2$, then $e^{t{\bm A}_p}\f=e^{t{\bm A}_2}\f=e^{t{\bm A}_q}\f$ for every $t>0$ and $\f\in C^{\infty}_c(\Rd;\R^m)$. Approximating $\f\in L^p(\Rd;\R^m)\cap L^q(\Rd;\R^m)$ in $L^p(\Rd;\R^m)\cap L^q(\Rd;\R^m)$ with a sequence of functions of function $(\f_n)\subset C^{\infty}_c(\Rd;\R^m)$, the equality $e^{t{\bm A}_p}=e^{t{\bm A}_q}\f$ on $L^p(\Rd;\R^m)\cap L^q(\Rd;\R^m)$ follows for every $t>0$ also in this case.
\end{proof}

\section{The full operator $\A$}

In this section, we consider the elliptic operator $\A$ defined in \eqref{opA-intro} assuming that $p \in (1,\infty)$ and that the coefficients $Q=(q_{ij})$, $B^i=(B^i_{hk})$, $V=(v_{hk})$ satisfy the following assumptions:
\begin{hyp}\label{hyp_0}
\begin{enumerate}[\rm(i)]
\item
$Q$ is real, symmetric matrix with entries in $C^1 (\Rd)$. Moreover,
$\langle Q(x)\xi, \xi\rangle >0$ for every $x\in\Rd$ and $\xi\in\Rd\setminus\{0\}$;
\item
$B^i: \Rd \rightarrow \R^{m\times m}$ are symmetric matrix-valued functions, with coefficients of class $C^1$ over $\Rd$ for every $i=1, \dots, d$;
\item
$V:\Rd \rightarrow \R^{m\times m}$ is a measurable matrix-valued function;
\item
there exist a function $v\in C^1(\Rd)$ with positive infimum $c_0$ and positive constants  $\kappa, c_1$ and $\theta<p$ such that
\begin{align}
&\langle V(x)\xi, \xi \rangle\ge v(x)|\xi|^2, \qquad  |V(x)\xi| \le c_1 v(x)|\xi|,\notag \\
& \sum_{h,k=1}^m \left\vert \sum_{i=1}^d B^i_{hk}(x) \eta^{k}_i \right\vert \leq \kappa \sqrt{v(x)}\sum_{k=1}^m\langle Q(x)\eta^k,\eta^k \rangle^\frac{1}{2},
\label{drift-cont}\\
&\langle {\rm div} B(x) \xi, \xi\rangle\geq -\theta v(x)|\xi|^2\notag
\end{align}
for every $x\in \Rd$, $\xi \in \Rm$, $\eta^k=(\eta^k_1, \ldots, \eta^k_d)\in \Rd$ and $k=1, \dots, m$;
\item
there exist positive constants $\gamma$, $C_\gamma$ and $c_2$ such that
\begin{eqnarray}
 &   \mbox{ if }\ &1<p\leq 2:\ \qquad  \langle Q(x)\nabla v(x), \nabla v(x)\rangle^{\frac 1 2} \le \gamma v(x)^{3/2}+ C_\gamma
 \label{cre-v}\\[2mm]
 & \mbox{ if }\ & p> 2:\ \qquad\qquad
\left\{
\begin{array}{l}
|Q(x)|^{\frac{1}{2}} |\nabla v(x)| \le \gamma v(x)^{3/2}+ C_\gamma,\\[1mm]
|Q(x)|\leq c_2 v(x),\\[1mm]
\sup_{|x-y|\leq \rho(x)}|\nabla Q(y)| \leq c_2 \mu(x) \rho(x)^{-1}
\end{array}
\right.
\notag
\end{eqnarray}
for every $x\in\Rd$, where $\mu(x)$ is the minimum eigenvalue of $Q(x)$ and $\rho(x)= |Q(x)|^{\frac{1}{2}} v(x)^{-\frac{1}{2}}$;
\item
there exists a positive function $\psi \in C^1(\Rd)$ such that $\lim_{|x|\to\infty}\psi(x)=\infty$ and
$\langle Q\nabla \psi, \nabla \psi\rangle \leq C \psi^2\log^2 \psi$.
\end{enumerate}
\end{hyp}

\begin{proposition}
Under Hypotheses $\ref{hyp_0}(i)$-$(vi)$, assume further that $\gamma^2\leq \frac{4}{p-1}$, if $1<p<2$. Then, for every $\varepsilon>0$ there exists a positive constant $K_\varepsilon$, depending on $p$ and the constants $\kappa, c_0,c_1,  c_2, \gamma, C_\gamma$, such that
\begin{equation}\label{interpolazione}
 \bigg\| \sum_{i=1}^d B^i  D_i\uu\bigg\|_p\leq \varepsilon \|\A_0\uu\|_p+ K_\varepsilon \|V\uu\|_p,\qquad\;\,\uu\in C^{\infty}_c(\Rd;\Rm).
\end{equation}
\end{proposition}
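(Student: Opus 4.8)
The plan is to reduce the vector-valued estimate to a scalar-type interpolation inequality by means of the oscillation condition in Hypothesis \ref{hyp_0}(iv), and then to establish that inequality through an energy (integration by parts) argument, treating the ranges $1<p<2$ and $p\ge 2$ separately. For the reduction, apply \eqref{drift-cont} with $\eta^k=\nabla u_k$: for $\uu\in C^\infty_c(\Rd;\Rm)$,
\[
\Big|\sum_{i=1}^d B^iD_i\uu\Big|\le \sum_{h,k=1}^m\Big|\sum_{i=1}^d B^i_{hk}D_iu_k\Big|\le \kappa\sqrt v\sum_{k=1}^m \q(u_k)^{1/2},
\]
so that $\big\|\sum_i B^iD_i\uu\big\|_p\le \kappa\sum_{k=1}^m\big\|\sqrt v\,\q(u_k)^{1/2}\big\|_p$. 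Since $|{\rm div}(Q\nabla u_k)|\le |\A_0\uu|$ and, by Hypothesis \ref{hyp_0}(iv), $v|u_k|\le v|\uu|\le |V\uu|$ pointwise, it suffices to prove the scalar inequality $\big\|\sqrt v\,\q(u)^{1/2}\big\|_p\le \varepsilon\,\|{\rm div}(Q\nabla u)\|_p+K_\varepsilon\|vu\|_p$ for $u\in C^\infty_c(\Rd)$ (the hypotheses being read componentwise); summing over $k$ and renaming $\varepsilon$ yields \eqref{interpolazione}.

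\emph{The case $1<p<2$.} With $w_\varepsilon=(u^2+\varepsilon)^{1/2}$ I would work with the energy $E_\varepsilon:=\int_{\Rd}v^{p-1}w_\varepsilon^{p-2}\,\q(u)\,dx$ (the power $v^{p-1}$ is chosen so that the second-order term produced below closes against $\|vu\|_p$). Testing ${\rm div}(v^{p-1}w_\varepsilon^{p-2}Q\nabla u)$ against $u$ and using $\nabla w_\varepsilon=w_\varepsilon^{-1}u\nabla u$ gives
\[
E_\varepsilon=-\!\int uv^{p-1}w_\varepsilon^{p-2}{\rm div}(Q\nabla u)\,dx-(p-1)\!\int uv^{p-2}w_\varepsilon^{p-2}\langle Q\nabla u,\nabla v\rangle\,dx-(p-2)\!\int u^2 v^{p-1}w_\varepsilon^{p-4}\q(u)\,dx.
\]
Since $p<2$ and $u^2w_\varepsilon^{p-4}\le w_\varepsilon^{p-2}$, the last integral is bounded by $(2-p)E_\varepsilon$ and is absorbed (this uses $p>1$), leaving $(p-1)E_\varepsilon$ controlled by the first two integrals. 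For the first, $|u|w_\varepsilon^{p-2}\le w_\varepsilon^{p-1}$ and H\"older's inequality give a bound by $\|{\rm div}(Q\nabla u)\|_p(\int(vw_\varepsilon)^p)^{1/p'}$, which tends to $\|{\rm div}(Q\nabla u)\|_p\|vu\|_p^{p-1}$ as $\varepsilon\to 0$. For the second, $|\langle Q\nabla u,\nabla v\rangle|\le \q(u)^{1/2}\langle Q\nabla v,\nabla v\rangle^{1/2}$ together with \eqref{cre-v} splits it into a ``$\gamma v^{3/2}$'' part and a ``$C_\gamma$'' part; a Cauchy--Schwarz splitting isolating the factor $v^{(p-1)/2}w_\varepsilon^{(p-2)/2}\q(u)^{1/2}$ (so that $E_\varepsilon^{1/2}$ appears) leaves the complementary factors $\int v^p u^2w_\varepsilon^{p-2}\le\|vu\|_p^p$ and $\int v^{p-3}u^2w_\varepsilon^{p-2}\le c_0^{-3}\|vu\|_p^p$ respectively (the latter using $v\ge c_0$). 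One is thus led, letting $\varepsilon\to0$, to a quadratic inequality of the shape $(p-1)E_0\le C\,\|{\rm div}(Q\nabla u)\|_p\,\|vu\|_p^{p-1}+(p-1)(\gamma+C'C_\gamma c_0^{-3/2})\,E_0^{1/2}\,\|vu\|_p^{p/2}$, which, thanks to the assumption $\gamma^2\le 4/(p-1)$, can be solved for $E_0^{1/2}$ in the form $E_0^{1/2}\le C(\|vu\|_p^{p/2}+\|{\rm div}(Q\nabla u)\|_p^{1/2}\|vu\|_p^{(p-1)/2})$. Finally, H\"older's inequality yields $\int v^{p/2}\q(u)^{p/2}\le E_0^{p/2}\|vu\|_p^{p(2-p)/2}$, i.e.\ $\|\sqrt v\,\q(u)^{1/2}\|_p\le E_0^{1/2}\|vu\|_p^{(2-p)/2}$; combining this with the previous bound and with Young's inequality applied to $\|{\rm div}(Q\nabla u)\|_p^{1/2}\|vu\|_p^{1/2}$ gives the scalar inequality. (One should also verify, by monotone/dominated convergence, that $E_0=\lim_{\varepsilon\to0}E_\varepsilon$ is finite and that all the limits pass; again $p>1$ and $u\in C^\infty_c$ are used here.)

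\emph{The case $p\ge 2$.} Here I would estimate $\int v^{p/2}\q(u)^{p/2}\,dx$ directly. Writing it as $\int v^{p/2}(\q(u)+\varepsilon)^{p/2-1}\langle Q\nabla u,\nabla u\rangle\,dx$ and integrating by parts, differentiating the weight produces a term containing $\nabla v$ --- controlled through the first inequality in Hypothesis \ref{hyp_0}(v) --- and a term containing $\nabla\q(u)$, hence $D^2u$ and $\nabla Q$. Using ${\rm div}(Q\nabla u)=\sum_{i,j}q_{ij}D_{ij}u+\sum_{i,j}(D_iq_{ij})D_ju$, the second-order contribution is reorganized in terms of ${\rm div}(Q\nabla u)$ and of $D^2u$, and $D^2u$ is estimated in $L^p$ with the weight $v^{p/2}$ by a localized interior $L^p$ elliptic (frozen-coefficient / Caccioppoli) estimate on balls of radius $\rho(x)=|Q(x)|^{1/2}v(x)^{-1/2}$: the conditions $|Q|\le c_2v$ and $\sup_{|x-y|\le\rho(x)}|\nabla Q(y)|\le c_2\mu(x)\rho(x)^{-1}$ of Hypothesis \ref{hyp_0}(v) guarantee that on each such ball the coefficients are comparable to constants with uniform bounds, so the local estimates patch together. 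The $\nabla v$-term and the lower-order terms are then absorbed via Young's inequality and $v\ge c_0$, giving the scalar inequality in this range as well.

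\emph{Main obstacle.} The delicate points are the quadratic absorption in the case $1<p<2$, which is precisely what forces the assumption $\gamma^2\le 4/(p-1)$, and, in the case $p\ge2$, the passage from the scale-invariant local $W^{2,p}$ estimate to the global bound on $D^2u$ weighted by $v^{p/2}$ in terms of ${\rm div}(Q\nabla u)$: this is where the structural conditions $|Q|\le c_2v$ and the oscillation control on $\nabla Q$ (with the natural scale $\rho=|Q|^{1/2}v^{-1/2}$) of Hypothesis \ref{hyp_0}(v) enter in an essential way.
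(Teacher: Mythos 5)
Your reduction to a scalar estimate is exactly the paper's argument: the paper proves this proposition by noting that \eqref{drift-cont}, applied with $\eta^k=\nabla u_k$, bounds $|\sum_i B^iD_i\uu|$ by $\kappa\sqrt v\sum_k\q(u_k)^{1/2}$ and then invoking the scalar interpolation inequality $\|\sqrt v\,\q(u)^{1/2}\|_p\le\varepsilon\|{\rm div}(Q\nabla u)\|_p+K_\varepsilon\|vu\|_p$ from \cite[Lemma 2.5, Proposition 3.3]{mpps}; you make this reduction explicit (correctly, including $|{\rm div}(Q\nabla u_k)|\le|\A_0\uu|$ and $v|u_k|\le|V\uu|$), and then you attempt to reprove the scalar inequality rather than cite it. Your energy argument for $1<p<2$ is in substance a correct re-derivation: the weighted identity, the absorption of the $(2-p)$-term (which uses $p>1$), the splitting of the $\nabla v$-term via \eqref{cre-v}, and the final H\"older/Young steps all check out, with two caveats. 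First, the factor $(\int(vw_\varepsilon)^p\,dx)^{1/p'}$ is infinite if the integral is taken over all of $\Rd$ (since $v\ge c_0$ and $w_\varepsilon\ge\sqrt\varepsilon$); you must keep it over the compact support of $u$, where the limit you claim does hold. Second, the quadratic inequality $aX^2\le b+cX$ with $a,b,c>0$ is solvable for $X=E_0^{1/2}$ with no restriction on the constants, so the hypothesis $\gamma^2\le 4/(p-1)$ plays no role where you invoke it; your argument simply never uses it (harmless, since it is an assumption, but the attribution is wrong --- in the paper it is inherited from the hypotheses of the scalar reference).

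For $p\ge 2$ what you offer is a programme rather than a proof: the genuinely hard part --- a covering of $\Rd$ by balls of radius $\rho(x)=|Q(x)|^{1/2}v(x)^{-1/2}$ with bounded overlap, the uniform comparability of $Q$ and $v$ on such balls (via the oscillation condition and $|Q|^{1/2}|\nabla v|\le\gamma v^{3/2}+C_\gamma$), interior Calder\'on--Zygmund estimates with constants independent of the ball, and the patching that yields an arbitrarily small coefficient in front of $\|\A_0\uu\|_p$ --- is precisely the content of \cite[Proposition 3.3]{mpps} and is not carried out; the preliminary integration by parts against $(\q(u)+\varepsilon)^{p/2-1}$ is not really needed once one has that machinery. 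Note also that at $p=2$ Hypothesis \ref{hyp_0}(v) provides only the first branch of \eqref{cre-v}, so $p=2$ should be handled by your energy argument (where it is in fact easier, the $(2-p)$-term vanishing). In short: same route as the paper, with a correct self-contained treatment of $1<p\le 2$ and, for $p>2$, a plausible sketch that matches, but does not replace, the cited scalar result.
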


The proof follows from the analogous scalar inequality proved in \cite[Lemma 2.5, Proposition 3.3]{mpps}.
This is the unique point of the paper where we need conditions on the oscillation of the function $Q$, in the case $p>2$.

\begin{remark}
\label{no-small}
{\rm It is worth observing explicitly that, as already proved in the scalar case, condition \eqref{interpolazione} does not imply that the drift term is a small perturbation of $\A_0$ or $\A_0-V$, see \cite[Remark 3.6]{metafune}}.
\end{remark}

For every $p\in (1,\infty)$, let ${\bm A}_p$ denote the realization of the operator $\A$ in $L^p(\Rd;\Rm)$ with domain
\begin{align*}
D_p=&\{\uu\in W^{2,p}_{{\rm loc}}(\Rd;\mathbb C^m):  \uu, \A_0\uu, V\uu\in L^p(\Rd;\mathbb C^m)\}\\
=& \{\uu\in W^{2,p}_{{\rm loc}}(\Rd;\mathbb C^m):  \A_0\uu, v\uu\in L^p(\Rd;\mathbb C^m)\}.
\end{align*}
On $D_p$ we consider the norm
$\|\uu\|_{D_p}=\|\A_0\uu\|_p+\|v\uu\|_p$,
which is clearly equivalent to the norm $\uu\mapsto \|\uu\|_p+\|\A_0\uu\|_p+\|V\uu\|_p$ due to Hypothesis \ref{hyp_0}(iv) which implies that $\|v\uu\|_p\le \|V\uu\|_p\le c_1\|v\uu\|_p$ and $\|\uu\|_p \le c_0^{-1}\|v\uu\|_p$ for every $\uu\in L^p(\Rd;\C^m)$.
Since $\A_0$ is a closed operator, $D_p$ endowed with the norm $\|\cdot\|_{D_p}$ is a Banach space.

We can now state the main generation result:

\begin{theorem}
\label{mainth}
Under Hypotheses $\ref{hyp_0}$, assume further the condition
\begin{align}
1-\frac{\theta}{p}-\frac{p-1}{p}\kappa m\gamma-\frac{1}{4(p-1)}\bigg [(3-p)\gamma+\frac{\kappa(2-p)}{p}(\sqrt{m}+m)\bigg ]^2>0
\label{pdis}
\end{align}
if $p\in (1,2)$, and the condition
\begin{align}
1\!-\!\frac{\theta}{p}\!-\!\gamma\bigg (\frac{p-1}{p}\kappa m\!+\!\frac{\gamma}{4}(p-1)\!+\!\frac{p-2}{2p}\kappa(\sqrt{m}\!+\!m)\bigg )\!-\!\frac{p-2}{4p^2}\kappa^2m(m\!+\!p\!-\!2)>0,
\label{pdis-1}
\end{align}
if $p\ge 2$. Then, the operator ${\bm A}_p$  generates  an analytic contraction semigroup $\T_p(t)$ in $L^p(\Rd;{\mathbb C}^m)$.
Moreover, if the above assumptions are satisfied also for some $1<q\neq p$, then $\T_p(t)\f=\T_q(t)\f$ for all $\f\in L^p(\Rd; {\mathbb C}^m)\cap L^q(\Rd;{\mathbb C}^m)$.
\end{theorem}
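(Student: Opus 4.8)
\emph{Plan.} The proof rests on three ingredients: a direct ``regular $L^p$-dissipativity'' estimate for $\A$ on $C_c^\infty(\Rd;\C^m)$, the identification of $D_p$ as the domain through the interpolation inequality \eqref{interpolazione}, and a range condition obtained as in Theorem \ref{core}; consistency is then an adaptation of the final part of the proof of Theorem \ref{thm-1}. For the first step, fix $\uu\in C_c^\infty(\Rd;\C^m)$ and $\varepsilon>0$, test the equation against the regularised duality map $w_\varepsilon^{p-2}\bar\uu$ (as in the proof of Theorem \ref{core}), and integrate the second-order term by parts; letting $\varepsilon\downarrow 0$ one is led to
\[
-\langle\A\uu,|\uu|^{p-2}\uu\rangle_{p,p'}
=\sum_{i=1}^d\int_{\Rd}\q\big(u_i,|\uu|^{p-2}u_i\big)\,dx
+\int_{\Rd}\langle V\uu,\uu\rangle|\uu|^{p-2}\,dx
-\int_{\Rd}\Big\langle\sum_{i=1}^dB^iD_i\uu,\uu\Big\rangle|\uu|^{p-2}\,dx .
\]
The real and imaginary parts of the first sum are rewritten, via the usual vector-valued algebraic identities, in terms of $\q(|\uu|^{p/2})$, $\sum_i\q(u_i)|\uu|^{p-2}$ and cross terms, the imaginary part carrying the factor $|p-2|$; the potential term is controlled by the first two inequalities in Hypothesis \ref{hyp_0}(iv), which give $\re\langle V\uu,\uu\rangle\ge v|\uu|^2$ and $|\langle V\uu,\uu\rangle|\le c_1v|\uu|^2$; the drift term is estimated pointwise, through \eqref{drift-cont}, by a constant multiple of $\kappa\sqrt v\,|\uu|\sum_k\q(u_k)^{1/2}$ and then split by Young's inequality between the gradient terms produced by $\A_0$ and the term $v|\uu|^p$ produced by $V$, while the additional $\q(v,\cdot)$-type contribution arising in the process is absorbed using \eqref{cre-v} (here the restriction $\gamma^2\le 4/(p-1)$ for $p<2$ is needed). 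Collecting all terms, the real part is bounded below by a non-negative quadratic form in the pair $(\nabla|\uu|^{p/2},\sqrt v\,|\uu|^{p/2})$ and the modulus of the imaginary part by $\tan\theta_p$ times that same form, with $\theta_p\in[0,\pi/2)$, precisely when \eqref{pdis} (for $1<p<2$) and \eqref{pdis-1} (for $p\ge 2$) hold; thus $-\A$ is sectorially dissipative on $C_c^\infty(\Rd;\C^m)$. Running the same computation on the resolvent equation $\lambda\uu-\A\uu=\f$, with an iteration on powers of $v$, additionally yields the a priori bound $\|\A_0\uu\|_p+\|v\uu\|_p\le C(\|\A\uu\|_p+\|\uu\|_p)$ for $\uu\in C_c^\infty(\Rd;\C^m)$.

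For the second step, note first that by \eqref{interpolazione} the map $\A$ sends $D_p$ into $L^p(\Rd;\C^m)$ and that $\sum_iB^iD_i$ has arbitrarily small relative bound with respect to the closed operator $\A_0-V$ on $D_p$, so $(\A,D_p)$ is closed. By Step 1 the operator $(\A,C_c^\infty(\Rd;\C^m))$ is closable and the a priori estimate, extended to the closure by truncation and mollification, shows that its closure has domain contained in $D_p$; consequently $\overline{(\A,C_c^\infty(\Rd;\C^m))}=(\A,D_p)$ once we know the latter generates a semigroup, since a generator has no proper generator extension. It remains to verify the range condition: if $\vv\in L^{p'}(\Rd;\C^m)$ annihilates $(\lambda_0-\A)(C_c^\infty(\Rd;\C^m))$ for some $\lambda_0>0$, then $\lambda_0\vv=\A^*\vv$ distributionally, hence $\vv\in W^{1,r}_{\rm loc}(\Rd;\C^m)$ by Lemma \ref{reg}; cutting off with $\zeta_n=\zeta(n^{-1}\log\psi)$, which is admissible thanks to Hypothesis \ref{hyp_0}(vi), and using that $\A^*$ is dissipative on $L^{p'}$ because $p^{-1}\langle({\rm div}B)\xi,\xi\rangle+\langle V\xi,\xi\rangle\ge(1-\theta/p)v|\xi|^2\ge 0$, one repeats the argument of Theorem \ref{core} to conclude $\lambda_0\|\vv\|_{p'}\le 0$, i.e. $\vv=\bm 0$. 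The Lumer--Phillips theorem then gives that $\bm A_p=(\A,D_p)$ generates a contraction semigroup $\T_p(t)$, which is analytic because of the strict sector bound $\theta_p<\pi/2$ furnished by Step 1 (here the strictness of \eqref{pdis}, \eqref{pdis-1} is essential).

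Finally, if $1<q\ne p$ and the corresponding assumptions hold, applying Steps 1--2 at the exponent $q$ shows that $\bm A_q$ generates an analytic contraction semigroup and that $C_c^\infty(\Rd;\C^m)$ is a core for both $\bm A_p$ and $\bm A_q$; one then obtains $\T_p(t)\f=\T_q(t)\f$ on $L^p(\Rd;\C^m)\cap L^q(\Rd;\C^m)$ exactly as in the final part of the proof of Theorem \ref{thm-1}, comparing the resolvents $R(\lambda,\bm A_p)\f$ and $R(\lambda,\bm A_q)\f$ for $\f\in C_c^\infty(\Rd;\C^m)$ via the uniqueness, in the class $W^{2,s}_{\rm loc}(\Rd;\C^m)\cap(L^p(\Rd;\C^m)+L^q(\Rd;\C^m))$, of the solution of $\lambda\uu-\A\uu=\f$, and then passing to the limit along a sequence in $C_c^\infty(\Rd;\C^m)$. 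The main obstacle is Step 1: carrying the numerical-range computation with sharp constants --- correctly bookkeeping the $|p-2|$-factors, the gradient terms from $\A_0$, the lower bound for $V$ from Hypothesis \ref{hyp_0}(iv), the pointwise drift bound \eqref{drift-cont}, and the $\nabla v$ terms governed by \eqref{cre-v} --- so that the non-negativity of the resulting quadratic form and the value of the sector angle are exactly encoded by \eqref{pdis}--\eqref{pdis-1}, and, coupled to this, upgrading the same estimate to the a priori bound $\|\A_0\uu\|_p+\|v\uu\|_p\le C(\|\A\uu\|_p+\|\uu\|_p)$ that pins down $D_p$.
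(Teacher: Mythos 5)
Your Step 1 (regular $L^p$-dissipativity plus the a priori bound $\|\A_0\uu\|_p+\|v\uu\|_p\le C(\|\A\uu\|_p+\|\uu\|_p)$ on $C^\infty_c$, obtained by testing against $w_\varepsilon^{p-2}\uu$ and against $v$-weighted test functions) is essentially the paper's Steps 1--3, and the constants do come out as \eqref{pdis}, \eqref{pdis-1}. The gap is in your Step 2, i.e.\ in the surjectivity and the identification of the domain. First, the closability argument only gives $D(\overline{\A})\subseteq D_p$; to conclude $\bm A_p=(\A,D_p)$ you would need the reverse inclusion, which amounts to the density of $C^\infty_c(\Rd;\C^m)$ in $(D_p,\|\cdot\|_{D_p})$ or, equivalently, to the injectivity of $\lambda I-\A$ on $D_p$. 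Your appeal to ``a generator has no proper generator extension'' is circular, since it presupposes exactly the generation property of $(\A,D_p)$ that is being proved; with only what you establish, the Lumer--Phillips theorem yields a generator whose domain could a priori be strictly smaller than $D_p$. The paper fills this hole differently: it first treats the drift-free operator $\A_0-V$ (for which Theorem \ref{thm-1} applies, since $\bb\equiv 0$ trivially fits the diagonal-drift setting of Section \ref{Hyp-sec}), proves $D_p=D_{p,\max}(\A_0-V)$ and the density of $C^\infty_c$ in $D_p$ using the a priori estimate with $B^i=0$, and then transfers surjectivity from $I-\A_0+V$ to $I-\A$ by the method of continuity, using the $t$-uniform lower bound $\|\bm{\mathcal L}_t\uu\|_p\ge K\|\uu\|_{D_p}$ coming from \eqref{interpolazione} and Step 3. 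No core or range condition for the full coupled operator is needed at this stage.

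Second, your proposed range condition ``repeat the argument of Theorem \ref{core} for $\A^*$'' is not available under Hypotheses \ref{hyp_0}. Theorem \ref{core} and Lemma \ref{reg} are proved only for diagonal drifts $B^i=b_iI$ and under the condition \eqref{beta1} linking $\bb$ and $Q$ to the Lyapunov function $\psi$; Hypotheses \ref{hyp_0} impose no such condition on the matrix-valued $B^i$, the componentwise elliptic-regularity reduction in Lemma \ref{reg} breaks down when the first-order term couples the components, and the adjoint $\A^*$ has drift $-\sum_iB^iD_i$ and potential $V^T+\operatorname{div}B$, so the cut-off estimates would have to be redone at the exponent $p'$ with constraints on $\kappa,\theta$ that are not implied by \eqref{pdis} or \eqref{pdis-1} at $p$ (only the combination $p^{-1}\operatorname{div}B+V$ is controlled, not $\operatorname{div}B$ from above). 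So as written the duality argument does not close; either you must develop the analogue of the paper's later injectivity Proposition for $\A^*$ in $L^{p'}$ under additional checks, or follow the paper's continuity-method route. The same remark applies to your consistency argument: uniqueness of the resolvent equation in the class $L^p+L^q$ is not established anywhere, whereas the paper simply reruns the continuity method in $L^p\cap L^q$ with the norm $\|\cdot\|_p+\|\cdot\|_q$, anchored on the consistency of the drift-free semigroups from Theorem \ref{thm-1}.
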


\begin{remark}
\label{forallp}
{\rm We point out that, if Hypothesis \ref{hyp_0}(v) is satisfied for every $\gamma>0$, then  conditions \eqref{pdis} and \eqref{pdis-1} reduce, respectively, to
\begin{align}
4(p^2-\theta p)(p-1)-\kappa^2 (2-p)^2(\sqrt{m}+m)^2>0,
\label{pdis-gamma0}
\end{align}
\begin{align}
4p^2-4\theta p-(p-2)\kappa^2m(m+p-2)>0.
\label{pdis-gamma0-1}
\end{align}
It is easy to check that \eqref{pdis-gamma0-1} is satisfied for every $p\ge 2$ for instance if $\theta<1/2$ and $km<\sqrt{8}$. On the other hand, condition \eqref{pdis-gamma0} cannot be satisfied for every $p\in (1,2)$; it is satisfied, for $p\in [p_0,2)$ for some $p_0\in (1,2)$, for instance if
$4(p_0^2-\theta p_0)(p_0-1)-\kappa^2 (2-p_0)^2(\sqrt{m}+m)^2>0$ and
\begin{eqnarray*}
\Delta=16(\theta^2-\theta+1)+\kappa^4(m+\sqrt{m})^4-32\theta\kappa^2(m+\sqrt{m})^2\le 0.
\end{eqnarray*}
If $\Delta>0$, then the third-order polynomial $f$, in the variable $p$, defined in \eqref{pdis-gamma0} has
a local maximum $f(p_1)$ and a local minimum $f(p_2)$ at some points $0<p_1<p_2$. Clearly, condition $f(p_0)>0$ is necessary
to guarantee that $f(p)>0$ for every $p\in [p_0,2)$. This condition is also sufficient if $p_0>p_2$ or $p_1>2$. On the other hand, when $p_0<p_1<2<p_2$, we also need to require that $\theta<2$ and, when $p_0<p_1<p_2<2$ or $p_1<p_0<p_2\le 2$, we need the additional condition $f(p_2)>0$.

Finally, if inequality \eqref{drift-cont} is replaced by the new condition
\begin{equation}
\sum_{h,k=1}^m \left\vert \sum_{i=1}^d B^i_{hk}(x) \eta^{k}_i \right\vert \leq (\kappa \sqrt{v(x)}+C_{\kappa})\sum_{k=1}^m\langle Q(x)\eta^k,\eta^k\rangle^{\frac{1}{2}}
\label{new-k}
\end{equation}
for every $x,\eta^k\in \Rd$ ($k=1, \dots, m$), $\xi \in \Rm$ and some positive constants $\kappa$ and $C_{\kappa}$, then the generation result in Theorem \ref{mainth} can be applied to the operator $\A-\lambda_{\kappa}$ for a suitable $\lambda_{\kappa}>0$. In particular, the operator $\bm A_p$
generates a strongly continuous analytic semigroup (not contractive, in general)  in $L^p(\Rd;\Rm)$. Indeed, condition \eqref{new-k} implies that for every $\varepsilon>0$ there exists a positive constant $\lambda$ such that Hypotheses \ref{hyp_0} are satisfied with $V$ being replaced by $V+\lambda$, provided that $\varepsilon$ is chosen sufficiently small such that condition \eqref{pdis} (resp. \eqref{pdis-1}) holds true with $\kappa$ being replaced by $\kappa+\varepsilon$.
In particular, if \eqref{new-k} is satisfied by every $\kappa>0$, then conditions
\eqref{pdis} and \eqref{pdis-1} reduce to $\theta<p$. Hence, if $\theta<1$, then we get generation results of a family of consistent semigroups for every $p\in (1,\infty)$.}
\end{remark}

In the proof of Theorem \ref{mainth} we will take advantage of the following result.

\begin{lemma}\label{lemB_variante} For every $\uu\in C_c^\infty(\Rd, \C^m)$, $\eta\in C^1(\Rd)$ and $\varepsilon>0$ it holds that
\begin{align}\label{form_bi}
{\rm Re}\int_{\Rd} \sum_{i=1}^d \langle B^iD_i\uu, \uu\rangle w_{\varepsilon}^{p-2} \eta dx=& \frac{2-p}{2}\sum_{i=1}^d\int_{\Rd}\langle B^i \uu,\uu\rangle w_{\varepsilon}^{p-3} \eta D_iw_{\varepsilon}dx\notag \\
&-\frac{1}{2}\int_{\Rd}\langle ({\rm div}B)\uu,\uu\rangle w_{\varepsilon}^{p-2}\eta dx\notag \\
&- \frac{1}{2}\int_{\Rd} \sum_{i=1}^d\langle B^i\uu,\uu\rangle w_{\varepsilon}^{p-2}D_i\eta dx.
\end{align}
Moreover,
\begin{align}
\int_{\Rd}w_{\varepsilon}^{p-2}\q(w_{\varepsilon})dx
\le \sum_{h=1}^m\int_{\Rd}w_\varepsilon^{p-2}\q(u_h)dx.
\label{form-Q}
\end{align}
\end{lemma}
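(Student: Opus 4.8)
The plan is to prove both identities by integration by parts, exploiting the symmetry of the matrices $B^i$ (Hypothesis \ref{hyp_0}(ii)) and the product rule for $w_\varepsilon$. For \eqref{form_bi}, I would start from the left-hand side and write $\langle B^iD_i\uu,\uu\rangle$ using the elementary identity $2\,{\rm Re}\langle B^iD_i\uu,\uu\rangle = D_i\langle B^i\uu,\uu\rangle - \langle (D_iB^i)\uu,\uu\rangle$, which holds precisely because $B^i$ is symmetric with $C^1$ entries (so that $\overline{\langle B^iD_i\uu,\uu\rangle}=\langle B^i\uu,D_i\uu\rangle=\langle B^iD_i\uu,\uu\rangle$ after taking real parts, up to the derivative-of-$B^i$ term). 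Summing over $i$ and recalling ${\rm div}B=\sum_i D_iB^i$, this turns $2\,{\rm Re}\int_{\Rd}\sum_i\langle B^iD_i\uu,\uu\rangle w_\varepsilon^{p-2}\eta\,dx$ into $\int_{\Rd}\sum_i D_i\langle B^i\uu,\uu\rangle\, w_\varepsilon^{p-2}\eta\,dx - \int_{\Rd}\langle({\rm div}B)\uu,\uu\rangle w_\varepsilon^{p-2}\eta\,dx$. Then I would integrate the first integral by parts (legitimate since $\uu\in C_c^\infty$), moving $D_i$ onto $w_\varepsilon^{p-2}\eta$. Using $D_i(w_\varepsilon^{p-2})=(p-2)w_\varepsilon^{p-3}D_iw_\varepsilon$ and the Leibniz rule gives exactly the three terms on the right-hand side (after dividing by $2$): the $\tfrac{2-p}{2}$ term from differentiating $w_\varepsilon^{p-2}$, the $-\tfrac12$ divergence term already isolated, and the $-\tfrac12\int\sum_i\langle B^i\uu,\uu\rangle w_\varepsilon^{p-2}D_i\eta\,dx$ term from differentiating $\eta$.

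For the second inequality \eqref{form-Q}, the point is the pointwise estimate $w_\varepsilon^2\,\q(w_\varepsilon)\le \sum_{h=1}^m w_\varepsilon^2\,\q(u_h)$ — in fact I would aim for $|\nabla w_\varepsilon|$ controlled by the gradients of the components. Since $w_\varepsilon=(|\uu|^2+\varepsilon)^{1/2}$, we have $D_iw_\varepsilon=w_\varepsilon^{-1}{\rm Re}\sum_h \overline{u_h}D_iu_h$, hence $w_\varepsilon^2(D_iw_\varepsilon)(D_jw_\varepsilon)=\big({\rm Re}\sum_h\overline{u_h}D_iu_h\big)\big({\rm Re}\sum_k\overline{u_k}D_ju_k\big)$. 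Contracting with $Q(x)$ (real, symmetric, positive definite, so it induces an inner product on $\Rd$ for each fixed $x$) and applying the Cauchy--Schwarz inequality for that inner product together with $|\uu|^2\le w_\varepsilon^2$, one gets $w_\varepsilon^2\,\q(w_\varepsilon)\le |\uu|^2\sum_h\q(u_h)\le w_\varepsilon^2\sum_h\q(u_h)$ pointwise; dividing by $w_\varepsilon^2$ (which is $\ge\varepsilon>0$), multiplying by $w_\varepsilon^{p-2}$ and integrating yields \eqref{form-Q}. One must be slightly careful that when $\uu$ is complex-valued the relevant quantity is ${\rm Re}\langle \overline{\uu},D_i\uu\rangle$; the Cauchy--Schwarz step still goes through because $|{\rm Re}\,z|\le|z|$ and $|\sum_h\overline{u_h}\,\xi_h|\le|\uu|\,(\sum_h|\xi_h|^2)^{1/2}$ for any vectors.

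I do not expect a genuine obstacle here; the only mildly delicate points are bookkeeping the real parts correctly in the complex-valued setting and making sure every integration by parts is justified, which it is because $\uu\in C_c^\infty(\Rd,\C^m)$ and all coefficients are $C^1$ on the (compact) support of $\uu$. The identity \eqref{form_bi} is purely algebraic once the symmetry of $B^i$ is invoked; \eqref{form-Q} is a pointwise Cauchy--Schwarz estimate for the $Q(x)$-inner product followed by integration. If anything needs care it is the precise form of the chain rule $D_iw_\varepsilon=w_\varepsilon^{-1}{\rm Re}\langle D_i\uu,\uu\rangle$ already recorded in the proof of Theorem \ref{core}, which I would simply cite.
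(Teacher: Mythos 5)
Your proposal is correct and follows essentially the same route as the paper: identity \eqref{form_bi} comes from the symmetry of the $B^i$ (so that $2\,{\rm Re}\langle B^iD_i\uu,\uu\rangle=D_i\langle B^i\uu,\uu\rangle-\langle (D_iB^i)\uu,\uu\rangle$) plus a single integration by parts, and \eqref{form-Q} is the pointwise Cauchy--Schwarz bound $w_\varepsilon^2\,\q(w_\varepsilon)\le|\uu|^2\sum_h\q(u_h)\le w_\varepsilon^2\sum_h\q(u_h)$ for the $Q$-inner product, exactly as in the paper's estimate \eqref{num}. The only cosmetic difference is that you integrate by parts starting from the left-hand side, which handles all $p$ at once, while the paper starts from the auxiliary integral $\mathscr K$ (carrying a factor $(p-2)^{-1}$) and treats $p=2$ separately.
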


\begin{proof}
Let us first prove \eqref{form_bi}. We fix $\uu$, $\eta$ and $\varepsilon$ as in the statement of the lemma and observe that for $p=2$, formula \eqref{form_bi} can be obtained just integrating by parts and using the symmetry of the matrices $B^i$ ($i=1,\ldots,d$).\\
In the case $p \neq 2$, we denote by ${\mathscr I}$ the left-hand side of \eqref{form_bi} and set
\begin{eqnarray*}
{\mathscr K}=\frac{1}{p-2}\int_{\Rd} \sum_{i=1}^d \langle B^i \uu, \uu \rangle D_i(w_{\varepsilon}^{p-2})\eta dx=\int_{\Rd} \sum_{i=1}^d \langle B^i \uu, \uu \rangle w_{\varepsilon}^{p-3} D_iw_{\varepsilon}\eta dx.
\end{eqnarray*}
By integrating by parts and taking into account the symmetry of the matrices $B^i$, we deduce that
\begin{align*}
{\mathscr K}=&-\frac{1}{p-2}\int_{\Rd} \langle ({\rm div}B) \uu,\uu\rangle w_{\varepsilon}^{p-2}\eta dx - \frac{2}{p-2} {\rm Re}{\mathscr I}\\
&-  \frac{1}{p-2}\int_{\Rd}\sum_{i=1}^d \langle B^i \uu,\uu\rangle w_{\varepsilon}^{p-2}D_i\eta dx,
\end{align*}
which immediately yields \eqref{form_bi}.

To prove \eqref{form-Q}, we preliminary observe that, since $Q$ is symmetric, it follows that $\langle Q \xi, \xi \rangle= \langle Q {\rm Re}\,\xi,{\rm Re}\,\xi \rangle+ \langle Q {\rm Im}\,\xi, \rm{Im}\,\xi \rangle$ whence $0\le \langle Q {\rm Re}\,\xi,{\rm Re}\,\xi \rangle \le \langle Q \xi, \xi \rangle$ for every $\xi \in \C^d$.
Using this fact and Cauchy-Schwarz inequality, we obtain
\begin{align}\label{num}
\q(w_\varepsilon)= &(4 w_\varepsilon^2)^{-1}\q(|\uu|^2)=w_{\varepsilon}^{-2}\bigg (\sum_{h=1}^m\langle Q{\rm Re}(u_h\nabla \overline{u}_h),{\rm Re}(u_h\nabla \overline{u}_h)\rangle^{\frac{1}{2}}\bigg )^2\notag\\
\le & w_\varepsilon^{-2}\bigg (\sum_{h=1}^m |u_h|\q(u_h)^{\frac{1}{2}}\bigg )^2\le |\uu|^2w_\varepsilon^{-2}\sum_{h=1}^m\q(u_h).
\end{align}
Observing that $|\uu|^2\le w_{\varepsilon}^2$, estimate \eqref{form-Q} follows at once from \eqref{num}.
\end{proof}

\begin{proof}[Proof of Theorem \ref{mainth}]
The proof is articulated in several steps.

{\bf Step 1.} Here, we prove that $(\A, C_c^\infty(\Rd, \C^m))$ is regularly $L^p$-dissipative, namely that there exists $\phi \in (0,\pi/2)$ such that $e^{\pm i \phi}\A$ is dissipative or, equivalently, that there exists a positive constant $C$ such that for all $\uu\in C_c^\infty(\Rd, \C^m)$
\begin{equation}\label{rdiss}
\left|{\rm Im} \int_{\Rd} \langle{\A}\uu,\uu \rangle |\uu|^{p-2} dx\right| \leq  -C{\rm Re} \int_{\Rd} \langle{\A}\uu,\uu \rangle |\uu|^{p-2} dx.
\end{equation}

We fix $\uu\in C_c^\infty(\Rd, \C^m)$ and $\varepsilon>0$.
Recalling that $D_iw_{\varepsilon}=w_{\varepsilon}^{-1}{\rm Re}\langle D_i\uu,\uu\rangle$, an integration by parts shows that
\begin{align}
{\rm Re} \int_{\Rd} \langle {\rm div}(Q\nabla \uu),\uu\rangle w_{\varepsilon}^{p-2}dx
=- \sum_{j=1}^m \int_{\Rd}\q(u_j)w_{\varepsilon}^{p-2}  dx - (p-2) \int_{\Rd}\q(w_{\varepsilon})w_{\varepsilon}^{p-2}dx.
\label{appel}
\end{align}
Hence, using formula \eqref{form_bi}, splitting the term ${\rm Re}\sum_{i=1}^d\int_{\Rd}\langle B^i D_i \uu,\uu\rangle w_{\varepsilon}^{p-2}dx$ into the sum
\begin{eqnarray*}
\frac{p-2}{p}{\rm Re}\sum_{i=1}^d\int_{\Rd}\langle B^i D_i \uu,\uu\rangle w_{\varepsilon}^{p-2}dx+\frac{2}{p}{\rm Re}\sum_{i=1}^d\int_{\Rd}\langle B^i D_i \uu,\uu\rangle w_{\varepsilon}^{p-2}dx
\end{eqnarray*}
and integrating by parts the second integral term, we deduce that
\begin{align}
&-{\rm Re} \int_{\Rd} \langle{\A}\uu,\uu \rangle w_{\varepsilon}^{p-2} dx\notag \\
=&\sum_{j=1}^m \int_{\Rd}\q(u_j)w_{\varepsilon}^{p-2}  dx +(p-2) \int_{\Rd}\q(w_{\varepsilon})w_{\varepsilon}^{p-2}dx\notag \\
&-\frac{p-2}{p} {\rm Re}  \sum_{i=1}^d\int_{\Rd}\langle B^i D_i \uu,\uu\rangle w_{\varepsilon}^{p-2}dx+\frac{p-2}{p}\sum_{i=1}^d\int_{\Rd}\langle B^i\uu,\uu\rangle w_{\varepsilon}^{p-3}  D_iw_{\varepsilon}dx\notag \\
 &+ {\rm Re} \int_{\Rd}\langle(p^{-1}({\rm div}B)+ V)\uu, \uu\rangle w_{\varepsilon}^{p-2}dx.
\label{appel-1}
\end{align}
To ease the notation, we denote by ${\mathscr I}$ and ${\mathscr J}$ the third and fourth integral terms in the right-hand side of the previous formula.
Taking \eqref{drift-cont} into account and applying Cauchy-Schwarz and H\"older's inequalities, for every $\varepsilon_0, \varepsilon_1 >0$ we get
\begin{align*}
{\mathscr I}
\ge & -\frac{|p-2|}{p}\kappa \sum_{k=1}^m\int_{\Rd}\q(u_k)^{\frac 1 2} v^{\frac{1}{2}}|\uu| w_{\varepsilon}^{p-2}dx\\
\ge & - \frac{\kappa \varepsilon_0|p-2|}{p} \sum_{k=1}^m \int_{\Rd}\q(u_k)w_{\varepsilon}^{p-2}dx - \frac{\kappa m|p-2|}{4\varepsilon_0 p}\int_{\Rd} v|\uu|^2w_{\varepsilon}^{p-2}dx
\end{align*}
and
\begin{align*}
{\mathscr J}\ge & -\frac{|p-2|}{p}\kappa m\int_{\Rd}\q(w_{\varepsilon})^{\frac{1}{2}} v^{\frac{1}{2}} |\uu|w_{\varepsilon}^{p-2}dx\notag\\
\ge & -\frac{\kappa m\varepsilon_1|p-2|}{p} \int_{\Rd}\q(w_{\varepsilon})w_{\varepsilon}^{p-2} dx - \frac{\kappa m|p-2|}{4p\varepsilon_1}\int_{\Rd} v |\uu|^2w_{\varepsilon}^{p-2} dx.
\end{align*}
Therefore,
\begin{align}
-{\rm Re} \int_{\Rd} \langle{\A}\uu,\uu \rangle w_{\varepsilon}^{p-2} dx\ge &\left( 1- \frac{\varepsilon_0\kappa |p-2|}{p}\right)\sum_{j=1}^m \int_{\Rd} \q(u_j)w_{\varepsilon}^{p-2} dx\notag \\
&+ \left(p-2-\frac{\varepsilon_1\kappa m }{p}|p-2|\right) \int_{\Rd}\q(w_{\varepsilon})w_{\varepsilon}^{p-2}dx\notag \\
&+ \left[1- \frac \theta p-  \frac{\kappa m|p-2|}{4 p}\left(\frac{1}{\varepsilon_0}+\frac{1}{\varepsilon_1}\right) \right] \int_{\Rd} v |\uu|^2 w_{\varepsilon}^{p-2} dx .
\label{cura}
\end{align}

On the other hand, since
\begin{align*}
|\q({\rm Im}\, u_j, w_{\varepsilon}){\rm Re}\, u_j|+|\q({\rm Re}\, u_j, w_{\varepsilon}){\rm Im}\, u_j|
\le & (\q({\rm Re}\,u_j)^{\frac{1}{2}}+\q({\rm Im}\,u_j)^{\frac{1}{2}})\q(w_{\varepsilon})^{\frac{1}{2}}|\uu|\\
\le &\bigg (\frac{1}{2}\q(u_j)+m\q(w_{\varepsilon})\bigg )w_{\varepsilon}
\end{align*}
for every $j=1,\ldots,m$, we can estimate
\begin{align*}
&\bigg |{\rm Im} \int_{\Rd} \langle{\A}\uu,\uu \rangle w_{\varepsilon}^{p-2} dx\bigg | \\
\le & |p-2|\sum_{j=1}^m\int_{\Rd}(|\q({\rm Im}\, u_j, w_{\varepsilon}){\rm Re}\, u_j|+|\q({\rm Re}\, u_j, w_{\varepsilon}){\rm Im}\, u_j| )w_{\varepsilon}^{p-3}dx\\
&+\sum_{j,k=1}^m\int_{\Rd}\bigg |\sum_{i=1}^d B^i_{jk} D_i u_k\bigg ||\uu| w_{\varepsilon}^{p-2}dx+c_1\int_{\Rd}v |\uu|^2w_{\varepsilon}^{p-2}dx\\
&\leq \frac{|p-2|}{2}\sum_{j=1}^m\int_{\Rd}\q(u_j)w_{\varepsilon}^{p-2}dx + |p-2| m\int_{\Rd}\q(w_{\varepsilon}) w_{\varepsilon}^{p-2}dx\\
&+\kappa \sum_{k=1}^m\int_{\Rd}\q(u_k)^{\frac 1 2}v^{\frac 1 2}|\uu|w_{\varepsilon}^{p-2}dx + c_1\int_{\Rd}v |\uu|^2w_{\varepsilon}^{p-2} dx\\
& \leq \bigg (\frac{|p-2|}{2}+\frac{\kappa}{2}\bigg ) \sum_{j=1}^m\int_{\Rd}\q(u_j)w_{\varepsilon}^{p-2}dx+|p-2|m\int_{\Rd}\q(w_{\varepsilon})w_{\varepsilon}^{p-2}dx\\
&+\bigg (c_1+\frac{\kappa m}{2}\bigg )\int_{\Rd}v|\uu|^2 w_{\varepsilon}^{p-2}dx.
 \end{align*}

We now distinguish between the cases $p\in (1,2)$ and $p\ge 2$. In the first case, the coefficient of the second term in the right-hand side of \eqref{cura} is negative. Using inequality \eqref{form-Q}, we can continue estimate \eqref{cura} and get
\begin{align}
-{\rm Re} \int_{\Rd} \langle{\A}\uu,\uu \rangle w_{\varepsilon}^{p-2} dx\ge &g_1(\varepsilon_0,\varepsilon_1)\sum_{j=1}^m\int_{\Rd}\q(u_j) w_{\varepsilon}^{p-2}dx\notag \\
&+ f_1(\varepsilon_0,\varepsilon_1)\int_{\Rd} v |\uu|^2 w_{\varepsilon}^{p-2} dx,
\label{cura-1}
\end{align}
where
\begin{align}
&f_1(x_1,x_2)=1-\frac{\theta}{p}-\frac{\kappa m|p-2|}{4p}\left (\frac{1}{x_1}+\frac{1}{x_2}\right ),
\label{funct-f1}\\
&g_1(x_1,x_2)=p-1+\frac{p-2}{p}\kappa(x_1+mx_2)
\label{funct-g1}
\end{align}
for every $x_1,x_2>0$, and, similarly,
\begin{align*}
&\bigg |{\rm Im} \int_{\Rd} \langle{\A}\uu,\uu \rangle w_{\varepsilon}^{p-2} dx\bigg |\notag\\
\le &\left(\frac{2-p}{2}+\frac{\kappa}{2}+(2-p)m\right )\sum_{j=1}^m\int_{\Rd}{\mathfrak q}(u_j) w_{\varepsilon}^{p-2}dx
\!+\!\left(c_1+\frac{\kappa m}{2}\right)\int_{\Rd}v|\uu|^2 w_{\varepsilon}^{p-2}dx.
\end{align*}
The supremum of function $f_1$, subject to the constraint $g_1(x_1,x_2)>0$, is
\begin{eqnarray*}
1-\frac{\theta}{p}-\frac{\kappa^2(p-2)^2 m(1+\sqrt{m})^2}{4p^2(p-1)},
\end{eqnarray*}
see Subsection \ref{app-a.3}, which is positive thanks to condition \eqref{pdis}. Then, we can choose $\varepsilon_0$ and $\varepsilon_1$ positive and such that the coefficients of the two terms in the right-hand side
of \eqref{cura-1} are both positive. Thus, we get
\begin{equation}
\bigg |{\rm Im} \int_{\Rd} \langle{\A}\uu,\uu \rangle w_{\varepsilon}^{p-2} dx\bigg | \leq  -C{\rm Re} \int_{\Rd} \langle{\A}\uu,\uu \rangle w_{\varepsilon}^{p-2} dx,
\label{rdiss-epsilon}
\end{equation}
with
\begin{eqnarray*}
C\ge \max\bigg\{\bigg(\frac{2-p}{2} + \frac{\kappa}{2}+(2-p)m\bigg )\frac{1}{g(\varepsilon_0,\varepsilon_1)},
\bigg (c_1+\frac{\kappa m}{2}\bigg )\frac{1}{f_1(\varepsilon_0,\varepsilon_1)}\bigg\}.
\end{eqnarray*}

Now, we address the case $p\ge 2$. Here, the coefficient of the second term in the right-hand side of \eqref{cura} can be made positive
by choosing $\varepsilon_1$ small enough. Note that the supremum of the function $f_1$,
subject to the constraints $x_1\in (0,p/(\kappa(p-2)))$  and $x_2\in (0,p/(\kappa m))$, is
\begin{eqnarray*}
1-\frac{\theta}{p}-\frac{p-2}{4p^2}\kappa^2m(m+p-2),
\end{eqnarray*}
(see Subsection \ref{app-a.4}), which is positive due to condition \eqref{pdis-1}. Hence, we can determine $\varepsilon_0$ and $\varepsilon_1$ such that the coefficients of the three terms in the right-hand side of
\eqref{cura} are all positive. With this choice of the parameters, estimate \eqref{rdiss-epsilon} follows immediately with
\begin{align*}
C\ge\max\bigg\{\frac{p(p-2+\kappa)}{2(p-\varepsilon_0\kappa (p-2))},  \frac{mp}{p-\varepsilon_1\kappa m}, \bigg (c_1+\frac{\kappa m}{2}\bigg )\frac{1}{f_2(\varepsilon_0,\varepsilon_1)}\bigg\}.
\end{align*}
Finally, letting $\varepsilon$ tend to $0^+$ in \eqref{rdiss-epsilon}, by dominated convergence we get \eqref{rdiss} in both cases.

{\bf Step 2.}  Here, we prove that there exists  a constant $C=C(m,p,\gamma, \kappa,  \theta)>0$ such that
\begin{equation}\label{estv}
\|v\uu\|_p\leq C \|\A\uu\|_p,\qquad\;\,\uu\in C^{\infty}_c(\Rd;\C^m),
\end{equation}
assuming that $C_\gamma=0$ in Hypothesis \ref{hyp_0}(v). Clearly, since the coefficients of the operator $\A$ are real-valued, we can limit ourselves to considering functions with values in $\Rm$.
We fix $\uu\in C_c^\infty(\Rd;\Rm)$, $\varepsilon>0$ and set $\f=-\A \uu$. Then,
\begin{align*}
\int_{\Rd} \langle \f, \uu\rangle w_{\varepsilon}^{p-2} v^{p-1} dx=&- \int_{\Rd} \langle {\rm div} (Q\nabla\uu),  \uu\rangle w_{\varepsilon}^{p-2}v^{p-1}dx\notag\\
&-\int_{\Rd} \sum_{i=1}^d \langle B^iD_i\uu, \uu\rangle w_{\varepsilon}^{p-2} v^{p-1}dx\\
&+ \int_{\Rd}\langle V\uu, \uu\rangle w_{\varepsilon}^{p-2}v^{p-1}dx={\mathscr J}_1+{\mathscr J}_2+{\mathscr J}_3.
   \end{align*}
Integrating by parts, taking \eqref{appel} into account, we deduce that
\begin{align*}
{\mathscr J}_1
=&\sum_{j=1}^m \int_{\Rd}\q(u_j)w_{\varepsilon}^{p-2} v^{p-1} dx + (p-2) \int_{\Rd}\q(w_{\varepsilon})v^{p-1}w_{\varepsilon}^{p-2}dx \notag\\
&+\sum_{j=1}^m \int_{\Rd} \q(u_j,v)u_j v^{p-2}w_{\varepsilon}^{p-2}dx + (p-2) \int_{\Rd}\q(w_{\varepsilon},v)v^{p-2} w_{\varepsilon}^{p-1} dx.\notag
\end{align*}
Thus, applying  Cauchy-Schwarz and H\"older's inequalities and taking Hypothesis \ref{hyp_0}(iv) into account, we get
\begin{align*}
{\mathscr J}_1\geq &(1-\varepsilon_1) \sum_{j=1} ^d \int_{\Rd} \q(u_j)w_{\varepsilon}^{p-2} v^{p-1} dx
 - \left( \frac{1}{4\varepsilon_1} + \frac{|p-2|}{4\varepsilon_2}\right) \gamma^2 \int_{\Rd} v^p w_{\varepsilon}^p dx\notag\\
& + (p-2-\varepsilon_2|p-2|)\int_{\Rd}\q(w_{\varepsilon})v^{p-1}w_{\varepsilon}^{p-2}dx
\end{align*}
for every $\varepsilon_1, \varepsilon_2 >0.$

We now estimate the term ${\mathscr J}_2$. Using the same arguments as in the
proof of \eqref{appel-1} and applying Young's inequality, we get
\begin{align*}
{\mathscr J}_2\geq &-\frac{|p-2|}{p} \kappa \sum_{k=1}^m \int_{\Rd}\q(u_k)^{\frac{1}{2}}|\uu| w_{\varepsilon}^{p-2}v^{p-\frac{1}{2}}dx\\
&-\frac{|p-2|}{p} \kappa m\int_{\Rd}\q(w_{\varepsilon})^{\frac{1}{2}}|\uu|w_{\varepsilon}^{p-2}v^{p-\frac{1}{2}}dx
+\frac{1}{p}\int_{\Rd}\langle ({\rm div}B)\uu,\uu\rangle w_{\varepsilon}^{p-2}v^{p-1} dx \\
&-\frac{p-1}{p}\kappa m\int_{\Rd}\q(v)^{\frac{1}{2}}v^{p-\frac{3}{2}}|\uu|w_{\varepsilon}^{p-1} dx\\
\geq & - \kappa\frac{|p-2|}{p} \varepsilon_3\sum_{k=1}^m \int_{\Rd}\q(u_k)w_{\varepsilon}^{p-2}v^{p- 1}dx\\
&- \frac{|p-2|}{p}\kappa m\varepsilon_4\int_{\Rd}\q(w_{\varepsilon})w_{\varepsilon}^{p-2}v^{p-1}dx
+\frac{1}{p}\int_{\Rd} \langle ({\rm div}B)\uu,\uu\rangle w_{\varepsilon}^{p-2}v^{p-1} dx\\
&-\left[ \frac{\kappa|p-2|}{4p}m\left( \frac{1}{\varepsilon_3}+ \frac{1}{\varepsilon_4}\right) + \frac{\gamma \kappa m(p-1)}{p} \right]\int_{\Rd}|\uu|^2w_{\varepsilon}^{p-2}v^pdx
\end{align*}
for every $\varepsilon_3, \varepsilon_4>0$.

Summing up, we have proved that
\begin{align}
& \int_{\Rd} \langle \f, \uu\rangle w_{\varepsilon}^{p-2} v^{p-1} dx\ge \bigg( 1-\varepsilon_1-\kappa \frac{|p-2|}{p}\varepsilon_3\bigg )\sum_{k=1}^m \int_{\Rd}\q(u_k)w_{\varepsilon}^{p-2}v^{p-1}dx \notag\\
&+ \bigg [ p-2-|p-2|\bigg(\varepsilon_2 + \frac{\kappa m}{p}\varepsilon_4\bigg)\bigg ] \int_{\Rd}\q(w_{\varepsilon}) w_{\varepsilon}^{p-2}v^{p-1}dx\notag\\
&+f_2(\varepsilon_1,\varepsilon_2,\varepsilon_3,\varepsilon_4)\int_{\Rd} v^p|\uu|^2w_{\varepsilon}^{p-2}dx,
\label{fine}
\end{align}
where
\begin{align}
f_2(x_1,x_2,x_3,x_4)\!=\!1\!-\!\frac{\theta}{p}\!-\!\gamma^2\bigg (\!\frac{1}{4x_1}\!+\!\frac{|p\!-\!2|}{4x_2}\!\bigg )\!-\!\frac{\kappa m|p\!-\!2|}{4p}\bigg (\!\frac{1}{x_3}\!+\!\frac{1}{x_4}\!\bigg )\!-\!\frac{p\!-\!1}{p}\kappa m \gamma
\label{funct-f2}
\end{align}
for every $(x_1,x_2,x_3,x_4)\in\R^4_+$.
By applying Young's inequality, we can easily show that for every $\delta>0$ there exists a positive constant $C=C(\delta, p)$ such that
\begin{equation}
\int_{\Rd} \langle \f, \uu\rangle w_{\varepsilon}^{p-2} v^{p-1} dx\leq \delta \int_{B(0,R)} v^p w_{\varepsilon}^p dx + C\int_{\Rd} |\f|^pdx,
\label{q3}
\end{equation}
where $B(0,R)$ is any ball containing the support of the function $\f$.
By combining \eqref{fine} and \eqref{q3}, we get
\begin{align}
&\left( 1-\varepsilon_1-\kappa \frac{|p-2|}{p}\varepsilon_3\right)\sum_{k=1}^m\int_{\Rd}\q(u_k)w_{\varepsilon}^{p-2}v^{p-1}dx\notag \\
&+ \bigg [p-2-|p-2|\bigg (\varepsilon_2+\frac{\kappa m}{p}\varepsilon_4\bigg )\bigg ]\int_{\Rd}\q(w_{\varepsilon}) w_{\varepsilon}^{p-2}v^{p-1}dx\notag\\
&+f_2(\varepsilon_1,\varepsilon_2,\varepsilon_3,\varepsilon_4)\int_{\Rd}|\uu|^2w_{\varepsilon}^{p-2}v^pdx-\delta \|v w_{\varepsilon}\|_{L^p(B(0,R))}^p\leq C\int_{\Rd} |\f|^pdx.
\label{domande}
\end{align}
As in Step 1, we distinguish between the cases $p\in (1,2)$ and $p\ge 2$. In the first case, assuming that $\displaystyle 1-\varepsilon_1-p^{-1}\kappa |p-2|\varepsilon_3>0$ and using \eqref{form-Q}, we
can combine the first two terms in the left-hand side of \eqref{domande} and obtain the inequality
\begin{align}
&g_2(\varepsilon_1,\varepsilon_2,\varepsilon_3,\varepsilon_4)\int_{\Rd}\q(w_{\varepsilon})w_{\varepsilon}^{p-2}v^{p-1}dx\notag\\
&+f_2(\varepsilon_1,\varepsilon_2,\varepsilon_3,\varepsilon_4)\int_{\Rd}|\uu|^2w_{\varepsilon}^{p-2}v^pdx
-\delta\|v w_{\varepsilon}\|_{L^p(B(0,R))}^p
\leq C\int_{\Rd} |\f|^pdx,
\label{domande-1}
\end{align}
where the function $g_2:\R_+^4\to\R$ is defined by
\begin{align}
g_2(x_1,x_2,x_3,x_4)=&p-1-x_1-\kappa \frac{|p-2|}{p}x_3-(2-p)\bigg (x_2+\frac{\kappa m}{p}x_4\bigg )
\label{funct-g2}
\end{align}
for every $(x_1,\ldots,x_4)\in\R^4_+$.
It is easy to check that the supremum of $f_2$, subject to the constrain $g_2(x_1,\ldots,x_4)>0$, is given by
\begin{align*}
1-\frac{\theta}{p}-\frac{p-1}{p}\kappa m\gamma-\frac{1}{4(p-1)}\bigg [(3-p)\gamma+\frac{\kappa|p-2|}{p}(\sqrt{m}+m)\bigg ]^2,
\end{align*}
(see Subsection \ref{app-a.1} for further details).
Due to condition \eqref{pdis}, this supremum is positive. Thus, we can choose the parameters $\varepsilon_j$ ($j=1,\ldots,4$)
such that the coefficients of the two first integral terms in the left-hand side of \eqref{domande-1} are both positive, so that
\begin{align*}
f_2(\varepsilon_1,\varepsilon_2,\varepsilon_3,\varepsilon_4)\int_{\Rd}|\uu|^2w_{\varepsilon}^{p-2}v^pdx
-\delta\|v w_{\varepsilon}\|_{L^p(B(0,R))}^p\leq &C\int_{\Rd} |\f|^pdx.
\end{align*}
Letting $\varepsilon$ tend to $0^+$, we can choose $\delta>0$ such that estimate \eqref{estv} follows.

If $p\ge 2$, then the supremum of the function $f_2$, subject to the constrains
$p(1-x_1)-\kappa (p-2)x_3>0$ and $p(1-x_2)-\kappa m x_4>0$, is
\begin{eqnarray*}
1-\frac{\theta}{p}-\frac{p-1}{p}\kappa m\gamma-\frac{\gamma^2}{4}(p-1)-\frac{p-2}{2p}\kappa\gamma(\sqrt{m}+m)-\frac{p-2}{4p^2}\kappa^2m(m+p-2),
\end{eqnarray*}
(see Subsection \ref{app-a.2} for further details), which is positive due to condition \eqref{pdis-1}. Now, we can argue as in the case $p\in (1,2)$ to obtain estimate \eqref{estv}.

{\bf Step 3.}  Here, we prove that there exist positive constants $M_1$ and $M_2$ depending on $\kappa , c_0,c_1,  c_2, \gamma, C_\gamma$, $\theta$ and $p$,  such that
\begin{equation}
M_1 \|\uu\|_{D_p}\leq \|\A\uu- \uu \|_p \leq M_2 \|\uu\|_{D_p},\qquad\;\,\uu\in C_c^\infty(\Rd;\C^m).
 \label{3-3}
 \end{equation}
Also in this case, we can assume that $\uu$ takes values in $\Rm$. We first assume that $C_\gamma=0$ and observe that condition \eqref{pdis} implies that $\gamma^2>4(p-1)^{-1}$ if $p\in (1,2)$. Then,  for every $\uu\in C_c^\infty(\Rd;\Rm)$, taking into account Step $2$ and \eqref{interpolazione}, we can estimate
 \begin{align*}
\|\uu\|_{D_p}
 \le & \|\A\uu\|_p  + \bigg\|\sum_{i=1}^d B^i D_i\uu\bigg\| _p+\|V\uu\|_p + \|v\uu\|_p\\
\le &  \|\A\uu\|_p + \frac{1}{2} \|\A_0\uu\|_p +(1+K_{1/2})c_1 \|v\uu\|_p \\
 \le & \frac{1}{2}\|\uu\|_{D_p}+[1+(1+K_{1/2})c_1C]\|\A\uu\|_p,
 \end{align*}
so that using Step 1, which shows that the operator $\A$ is dissipative, we get
$\|\uu\|_{D_p}\le 2K\|\A\uu\|_p\le 2K\|\A\uu-\uu\|_p+2K\|\uu\|_p,
\le 4K\|\A\uu-\uu\|_p$,
where $K=1+(1+K_{1/2})c_1C$. The first part of \eqref{3-3} follows with $M_1=(4K)^{-1}$.
If $C_\gamma\neq 0$, then we can determine a positive constant $\lambda$ such that $V+\lambda I$ and $v+\lambda$ satisfy Hypothesis \ref{hyp_0}(ii) with $C_\gamma=0$.
 In such a case, using again the dissipativity of $\A$ we obtain
\begin{align*}
\|\uu\|_{D_p}\le\widetilde K\|\A\uu-\lambda\uu-\uu\|_p\le \widetilde K\|\A\uu-\uu\|_p+\widetilde K\lambda\|\uu\|_p\le (1+\lambda)\widetilde K\|\A\uu-\uu\|_p
\end{align*}
and the first part of \eqref{3-3} follows with $M_1=((1+\lambda)\widetilde K)^{-1}$ and $\widetilde K$ is a positive constant, depending on $c_1$, $K_{1/2}$ and $C$.

To prove the other part of \eqref{3-3} we argue similarly, observing that
\begin{align*}
\|\A\uu -\uu\|_p \leq &2\|\A_0\uu\|_p+(M_1+1)\|V\uu\|_p+\|\uu\|_p\\
 \le &2\|\A_0\uu\|_p+(c_1M_1+c_1+c_0^{-1})\|v\uu\|_p\\
\le &\max\{2,c_1M_1+c_1+c_0^{-1}\}\|\uu\|_{D_p}.
\end{align*}

{\bf Step 4.} Here, we prove that $D_p=D_{p,\max}(\A_0-V):=\{\uu\in L^p(\Rd;\C^m): \A_0\uu-V\uu\in L^p(\Rd;\C^m)\}$ and that $C^{\infty}_c(\Rd;\Rm)$ is dense in $D_p$. Clearly, $D_p\subset D_{p,\max}(\A_0-V)$, so let us prove the other inclusion. We fix $\uu\in D_{p,\max}(\A_0-V)$ and observe that Theorem \ref{thm-1} guarantees
the existence of a sequence $(\uu_n)\subset C^{\infty}_c(\Rd;\R^m)$ converging to $\uu$ in $L^p(\Rd;\R^m)$ and such that $\A_0\uu_n-V\uu_n$
converges to $\A_0\uu-V\uu$ in $L^p(\Rd;\R^m)$ as $n$ tends to $\infty$.
Step 3, applied with $B^i=0$, shows that $(\uu_n)$ is a Cauchy sequence in $D_p$ endowed with the norm $\|\cdot\|_{D_p}$.
Since this latter is a Banach space, we conclude that $\uu\in D_p$ and the inclusion $D_{p,\max}(\A_0-V)\subset D_p$ follows.
This argument also shows that $C^{\infty}_c(\Rd;\R^m)$ is dense in $D_p$.

{\bf Step 5.} Here, we prove that the operator $(\A, D_p)$ generates a contraction semigroup in $L^p(\Rd;\Rm)$.
In view of Step 1, it suffices to show that the operator $I-\A: D_p\to L^p(\Rd;\Rm)$ is surjective.
For this purpose, we apply the continuity method. For every $t\in [0,1]$, we introduce the operator
\begin{align*}
\bm{\mathcal L}_t  \uu= \uu -  {\rm div}(Q \nabla \uu)- t\sum_{i=1}^d B^i D_i \uu + V\uu, \qquad\;\,  \uu\in D_p.
\end{align*}
Due to the density of $C^{\infty}_c(\Rd;\Rm)$ in $D_p$, we can first extend \eqref{interpolazione} to every $\uu\in D_p$ and, then, using this inequality, we can extend \eqref{3-3} to every $\uu\in D_p$. Thus, we can determine a positive constant $K$, independent of $t\in [0,1]$, such that
$\|\bm{\mathcal L}_t\uu\|_{p} \geq K \|\uu\|_{D_p}$ for every $t\in [0,1]$ and $\uu\in D_p$. Moreover, Theorem \ref{thm-1} and Step 4 show that the operator $(\A_0-V,D_p)$ generates
a strongly continuous semigroup of contractions in $L^p(\Rd;\Rm)$. Hence, the operator $\bm {\mathcal L}_0$ is surjective on $L^p(\Rd;\Rm)$. Since the operator $\A_0$ has real-valued coefficients, it follows that $\bm{\mathcal L}_0$ is surjective on $L^p(\Rd;\Rm)$. The continuity method applies
showing that also the operator $\bm{\mathcal L}_1=I-\A:D_p\rightarrow L^p(\Rd;\Rm)$ is surjective.

{\bf Step 6.} Here, we complete the proof, showing that, if $q$ is a different index in $(1,\infty)$, which satisfies the assumptions of the theorem, then
$\T_p(t)\f=\T_q(t)\f$ for every $t>0$ and $\f\in L^p(\Rd;\mathbb C^m)\cap L^q(\Rd;\mathbb C^m)$.
Since both $\T_p(t)$ and $\T_q(t)$ map functions with values in $\R^m$ in functions with values in $\Rm$, we can limit ourselves to considering functions with values in $\R^m$.
By Theorem \ref{thm-1}, the semigroups generated by the closure of the operator $(\A_0-V,C^{\infty}_c(\Rd;\R^m))$ in $L^p(\Rd;\R^m)$ and in $L^q(\Rd;\R^m)$, coincide on $L^p(\Rd;\R^m)\cap L^q(\Rd;\R^m)$. As a byproduct, writing the
resolvent operators as the Laplace transform of the semigroups. we infer that the resolvent operators coincide on $L^p(\Rd;\R^m)\cap L^q(\Rd;\R^m)$. Therefore,
for every $\f\in L^p(\Rd;\R^m)\cap L^q(\Rd;\R^m)$ and $\lambda\in\R$ sufficiently large there exists a unique $\uu\in D_p\cap D_q$ which solves the equation $\lambda\uu-\A_0\uu+V\uu=\f$.

Next, we observe that all the computations in the previous steps can be performed replacing $\|\cdot\|_{D_p}$ with $\|\cdot\|_{D_p} + \|\cdot\|_{D_q}$ and by applying the method of continuity in the space $L^p(\Rd; \Rm)\cap L^q(\Rd;\Rm)$ endowed with the norm $\|\cdot\|_p+\|\cdot\|_q$.
It follows that $\lambda I-\A: D_p\cap D_q\rightarrow L^p(\Rd; \Rm)\cap L^q(\Rd;\Rm)$ is invertible for every $\lambda >0$, and therefore $(\lambda I-
{\bm A}_p)^{-1}\f=(\lambda I-{\bm A}_q)^{-1}\f$ for every $\f\in L^p(\Rd; \Rm)\cap L^q(\Rd;\Rm)$.
By the representation formula of semigroups in terms of the resolvents, we get the assertion.
\end{proof}

\begin{remark}
\rm{We point out that if there exists $\mu_0>0$ such that $\langle Q(x)\xi, \xi  \rangle \ge \mu_0|\xi|^2$  for every $x,\xi \in \Rd$, then $D_2$ is continuously embedded in $W^{1,2}(\Rd;\Rm)$. Indeed, from \eqref{domande}, with $v\equiv 1$, it follows that
\begin{eqnarray*}
\sum_{k=1}^m\int_{\Rd}\q(u_k)dx\le C'\int_{\Rd}|\A\uu|^2dx,\qquad\;\,\uu\in C^{\infty}_c(\Rd;\C^m),
\end{eqnarray*}
from which it follows immediately that
\begin{eqnarray*}
\int_{\Rd}|\nabla\uu|^2dx\le C'\mu_0^{-1}\int_{\Rd}|\A\uu|^2dx.
\end{eqnarray*}
Since $C^{\infty}_c(\Rd;\C^m)$ is a core for ${\bm A}_2$, the previous inequality extends to every $\uu\in D_2$.}
\end{remark}

To conclude this section we prove that $D_p$ coincides with the maximal domain of the realization ${\bm A}_p$ of $\A$ in $L^p(\Rd;\Rm)$ and we provide some examples of operators $\A$ which satisfy our assumptions.

\begin{proposition}
Under the assumptions of Theorems $\ref{core}$ and $\ref{mainth}$, it holds that
\begin{eqnarray*}
D_p=\{\uu \in L^p(\Rd;{\mathbb C}^m)\cap W^{2,p}_{\rm loc}(\Rd;\C^m): \A\uu \in L^p(\Rd;\C^m)\}=: D_{\rm max}({\bm A}_p)
\end{eqnarray*}
for each $p \in (1,\infty)$. Consequently, $C^\infty_c(\Rd;\C^m)$ is a core for $({\bm A}_p,D_{\rm max}({\bm A}_p))$.
\end{proposition}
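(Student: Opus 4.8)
The plan is to deduce $D_p=D_{\rm max}({\bm A}_p)$ from the generation result of Theorem \ref{mainth} together with an injectivity argument carried out by duality, in the spirit of the proof of Theorem \ref{thm-1}. First I would record the trivial inclusion $D_p\subseteq D_{\rm max}({\bm A}_p)$: if $\uu\in D_p$ then $\uu\in L^p(\Rd;\C^m)\cap W^{2,p}_{\rm loc}(\Rd;\C^m)$, $\A_0\uu,V\uu\in L^p(\Rd;\C^m)$, and $\sum_{i=1}^dB^iD_i\uu\in L^p(\Rd;\C^m)$ by \eqref{interpolazione} (extended to $D_p$ by density, as in the proof of Theorem \ref{mainth}), whence $\A\uu\in L^p(\Rd;\C^m)$. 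Since Theorem \ref{mainth} guarantees that $I-{\bm A}_p\colon D_p\to L^p(\Rd;\C^m)$ is bijective, the reverse inclusion will follow once I show that $I-\A$ is injective on $D_{\rm max}({\bm A}_p)$: indeed, given $\uu\in D_{\rm max}({\bm A}_p)$ I would set $\f:=\uu-\A\uu\in L^p(\Rd;\C^m)$, pick $\ww\in D_p$ with $\ww-\A\ww=\f$, and be reduced to proving that $\zz:=\uu-\ww$, which lies in $D_{\rm max}({\bm A}_p)$ and satisfies $\zz-\A\zz={\bm 0}$, must vanish.

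To prove $\zz={\bm 0}$ I would pass to the formal adjoint. Since $\zz\in W^{2,p}_{\rm loc}(\Rd;\C^m)$ and the coefficients have the regularity in Hypotheses \ref{hyp_0}(i)--(ii), the functions ${\rm div}(Q\nabla\zz)$ and $\sum_iB^iD_i\zz$ belong to $L^p_{\rm loc}(\Rd;\C^m)$, so $V\zz=\A_0\zz+\sum_iB^iD_i\zz-\A\zz\in L^1_{\rm loc}(\Rd;\C^m)$ and integration by parts is legitimate: for every $\bm\varphi\in C_c^\infty(\Rd;\C^m)$ one gets $0=\langle\zz-\A\zz,\bm\varphi\rangle_{p,p'}=\langle\zz,\bm\varphi-\A^*\bm\varphi\rangle_{p,p'}$, where $\A^*$ has diffusion part $\A_0$, drift $-\bb$ and zero-order coefficient $V^{\top}+({\rm div}\bb)I$. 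The crucial point is that $\A^*$ fits the framework of Theorem \ref{core} with $p'$ in place of $p$: ellipticity and the local regularity of the coefficients are preserved, a two-sided bound on $\langle\bb,\nabla\psi\rangle/(\psi\log\psi)$ (available here, of the type in Theorem \ref{thm-1}) yields condition \eqref{beta1} for the drift $-\bb$, and condition \eqref{etoile-0} for $\A^*$ with exponent $p'$ reads $(p')^{-1}({\rm div}(-\bb))|\xi|^2+\langle(V^{\top}+({\rm div}\bb)I)\xi,\xi\rangle=p^{-1}({\rm div}\bb)|\xi|^2+\langle V\xi,\xi\rangle\ge0$, i.e.\ exactly condition \eqref{etoile-0} for $\A$. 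Hence $(\A^*,C_c^\infty(\Rd;\C^m))$ is dissipative on $L^{p'}(\Rd;\C^m)$ and, by Theorem \ref{core}, its closure generates a $C_0$-semigroup; by Lumer--Phillips $(I-\A^*)(C_c^\infty(\Rd;\C^m))$ is then dense in $L^{p'}(\Rd;\C^m)$. Since $\g\mapsto\langle\zz,\g\rangle_{p,p'}$ is continuous on $L^{p'}(\Rd;\C^m)$ and vanishes on this dense subspace, $\zz={\bm 0}$, so $D_p=D_{\rm max}({\bm A}_p)$; the core assertion follows at once because $C_c^\infty(\Rd;\C^m)$ is dense in $(D_p,\|\cdot\|_{D_p})$, as established in Step~4 of the proof of Theorem \ref{mainth}.

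There is also a shortcut worth mentioning: once one knows — via the argument above, or via Theorem \ref{thm-1} applied to the present (diagonal) operator — that $\A$ endowed with the maximal domain generates a contraction semigroup as well, the injectivity argument can be bypassed, since two restrictions of the formal operator $\A$ that both generate $C_0$-semigroups and whose domains are nested necessarily coincide. In either route, the step I expect to be the genuine obstacle is the verification that $\A^*$ really meets the hypotheses of Theorem \ref{core} on $L^{p'}$ — concretely, the ``self-duality'' of \eqref{etoile-0} under the replacement $(p,\bb,V)\rightsquigarrow(p',-\bb,V^{\top}+({\rm div}\bb)I)$ and the persistence of the $\psi$-conditions for the drift $-\bb$, which requires the two-sided control of $\langle\bb,\nabla\psi\rangle$ — together with the small but necessary check that $V\zz\in L^1_{\rm loc}(\Rd;\C^m)$ so that the passage to the adjoint is justified. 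Everything else is routine bookkeeping around Theorem \ref{mainth}.
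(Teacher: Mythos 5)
The first half of your argument coincides with the paper's: the inclusion $D_p\subseteq D_{\rm max}({\bm A}_p)$ via \eqref{interpolazione}, and the reduction of the reverse inclusion to the injectivity of $\lambda I-\A$ on $D_{\rm max}({\bm A}_p)$, using the surjectivity supplied by Theorem \ref{mainth}. The way you prove injectivity, however, has a genuine gap. Your duality step rests on applying Theorem \ref{core} to the formal adjoint $\A^*$ in $L^{p'}(\Rd;\C^m)$, but Section 2 (Lemma \ref{reg}, Theorem \ref{core}, Theorem \ref{thm-1}) is developed only for diagonal drifts $B^i=b_iI$, whereas the Proposition concerns the full operator of Section 3, in which the $B^i$ are genuine symmetric $m\times m$ matrices coupling the system at first order. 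Your description of $\A^*$ as having ``drift $-\bb$ and zero-order coefficient $V^{\top}+({\rm div}\bb)I$'' silently presupposes the diagonal structure; in general $\A^*\g=\A_0\g-\sum_{i=1}^dB^iD_i\g-(V^{\top}+({\rm div}B)^{\top})\g$ keeps the matrix first-order coupling, and the componentwise reduction used in Lemma \ref{reg} (moving the off-diagonal zero-order terms to the right-hand side) has no analogue when the equations are coupled through derivatives of the other components. So Theorem \ref{core} cannot be cited for $\A^*$, and the same objection defeats your ``shortcut'' via Theorem \ref{thm-1} applied to ``the present (diagonal) operator'': the operator is not diagonal.

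Even if the drift were diagonal, the point you yourself flag as the obstacle is not settled by the standing assumptions: applying Theorem \ref{core} to $\A^*$ needs \eqref{beta1} for $-\bb$, i.e.\ the two-sided bound $|\langle\bb,\nabla\psi\rangle|\le C\,\psi\log\psi$, while the hypotheses of Theorem \ref{core} give only a one-sided bound and Hypotheses \ref{hyp_0} control the drift solely through \eqref{drift-cont}, that is by $\kappa\sqrt{v}\,\q(\cdot)^{1/2}$ with $v$ unbounded; combined with Hypothesis \ref{hyp_0}(vi) this yields at best $|\langle\bb,\nabla\psi\rangle|\le \kappa\sqrt{C}\,\sqrt{v}\,\psi\log\psi$, which is not \eqref{beta1}. (Your verification that \eqref{etoile-0} is self-dual under $(p,\bb,V)\rightsquigarrow(p',-\bb,V^{\top}+({\rm div}\bb)I)$ is correct, but it is not the only thing to check.) The paper avoids both obstacles by proving injectivity on $D_{\rm max}({\bm A}_p)$ directly: one tests $\lambda\uu-\A\uu={\bm 0}$ against $\uu z^{p-2}\zeta_n^2$ with $\zeta_n=\zeta(n^{-1}\log\psi)$ and $z=w_\varepsilon$ or $|\uu|$, integrates by parts as in Lemma \ref{lemB_variante}, absorbs all drift contributions through \eqref{drift-cont} and Young's inequality into the $\q(u_k)$, $\q(z)$ and $v|\uu|^2$ terms --- conditions \eqref{pdis} and \eqref{pdis-1} with $\gamma=0$ make those coefficients nonpositive --- and is left only with $\q(\zeta_n)$-terms, which disappear as $n\to\infty$ thanks to Hypothesis \ref{hyp_0}(vi). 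To salvage a duality route you would first have to extend Theorem \ref{core} to matrix-coupled drifts and add a hypothesis of type \eqref{beta1} for the drift; as the hypotheses stand, the direct weighted estimate is the argument that works.
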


\begin{proof}
First of all, let us observe that the inclusion $D_p \subset D_{\rm max}(\A_p)$ is immediate consequence of the estimate $\|\uu\|_p\le c_0^{-1}\|v\uu\|_p$ and the interpolative estimate \eqref{interpolazione}.
To prove that $D_{\rm max}(\A_p)\subset D_p$, it suffices to prove that $\lambda I-\A$ is injective on $D_{\rm max}(\A_p)$ for some (hence all) $\lambda>0$. So, let us consider  $\uu \in D_{\rm max}(\A_p)$ such that $\lambda \uu-\A \uu={\bm 0}$. We have to show that $\uu \equiv {\bm 0}$. To this aim, we prove that
\begin{equation}\label{caos}
\lambda\int_{\Rd} |\uu|^2z^{p-2}  dx\le 0,
\end{equation}
where $z=w_{\varepsilon}$ and $\varepsilon$ is any positive constant, if $p\in (1,2)$, whereas $z=|\uu|$ if $p\ge 2$.
Once formula \eqref{caos} is proved, the claim follows easily letting $\varepsilon \to 0$. The argument used to prove \eqref{caos} is similar to that already used in the proof of Theorems \ref{core} and \ref{mainth}. For this reason we give a sketch of it.
Note that
\begin{align}
\lambda\int_{\Rd}|\uu|^2 z^{p-2}\zeta_n^2 dx&=- \sum_{k=1}^m \int_{\Rd}{\mathfrak q}(u_k)z^{p-2}\zeta_n^2 dx- (p-2)\int_{\Rd} {\mathfrak q}(z)z^{p-2}\zeta_n^2dx\notag\\
&-  \int_{\Rd} {\mathfrak q}(z,\zeta_n^2)z^{p-1} dx-\frac{1}{p}\sum_{i=1}^d\int_{\Rd}\langle B^i \uu,\uu\rangle z^{p-2}D_i\zeta_n^2 dx\notag\\
&+ \frac{p-2}{p}\int_{\Rd} \sum_{i=1}^d \langle B^iD_i\uu, \uu\rangle z^{p-2} \zeta_n^2dx\notag\\
&+\frac{2-p}{p}\sum_{i=1}^d\int_{\Rd}\langle B^i \uu,\uu\rangle z^{p-3}\zeta_n^2D_i z dx\notag\\
&-\int_{\Rd}[p^{-1}\langle ({\rm div}B)\uu,\uu\rangle+\langle V\uu,\uu\rangle]  z^{p-2} \zeta_n^2 dx.
\label{para}
\end{align}
To ease the notation, we denote by ${\mathscr I}_j$, $j=1,\ldots,5$ the last five integral terms in the right-hand side of \eqref{para}.
Using H\"older's inequality, we get
\begin{align*}
|{\mathscr I}_1|\le \varepsilon_0 \int_{\Rd}{\mathfrak q}(z) z^{p-2}\zeta_n^2dx+ \frac{1}{\varepsilon_0}\int_{\Rd}{\mathfrak q}(\zeta_n)z^{p} dx
\end{align*}
for every $\varepsilon_0>0$.
Moreover, using Hypothesis \ref{hyp_0}(iv) and again H\"older's inequality  we deduce that
\begin{align*}
|{\mathscr I}_2|&\le 2\frac{m\kappa}{p}  \int_{\Rd}\sqrt{v}\sqrt{{\mathfrak q}(\zeta_n)}|\uu|^2 z^{p-2}\zeta_n dx\\
& \le \varepsilon_2 \int_{\Rd} v|\uu|^2z^{p-2}\zeta_n^2 dx+\frac{m^2\kappa^2}{\varepsilon_2p^2}\int_{\Rd}{\mathfrak q}(\zeta_n)|\uu|^2 z^{p-2}dx
\end{align*}
for every $\varepsilon_2>0$ and
\begin{align*}
|{\mathscr I}_3|
\le \varepsilon_1 \kappa\frac{|p-2|}{p} \sum_{k=1}^m\int_{\Rd}{\mathfrak q}(u_k)z^{p-2}\zeta_n^2dx+ \frac{m\kappa}{4\varepsilon_1}\frac{|p-2|}{p}\int_{\Rd} v|\uu|^2z^{p-2} \zeta_n^2dx
\end{align*}
for every $\varepsilon_1>0$.
Finally, for every $\varepsilon_3>0$,
\begin{align*}
|{\mathscr I}_4| &\le \varepsilon_3m \kappa\frac{|p-2|}{p} \int_{\Rd}{\mathfrak q}(z)\zeta_n^2 z^{p-2}dx+ \frac{m\kappa}{4 \varepsilon_3}\frac{|p-2|}{p}\int_{\Rd}|\uu|^2 v z^{p-2}\zeta_n^2 dx.
\end{align*}
In addition,
\begin{eqnarray*}
-{\mathscr I}_5\le \left(\frac{\vartheta}{p}-1\right)\int_{\Rd} v|\uu|^2 z^{p-2} \zeta_n^2 dx.
\end{eqnarray*}
Now, we distinguish between the cases $p \ge 2$ and $p \in(1,2)$. In the first case, from all the above estimates we obtain
\begin{align}\label{last}
\lambda\int_{\Rd}|\uu|^p\zeta_n^2 dx\le &\left(-1+\varepsilon_1\kappa\frac{p-2}{p}\right) \sum_{k=1}^m \int_{\Rd}{\mathfrak q}(u_k)|\uu|^{p-2} \zeta_n^2 dx\notag\\
&+\left(2-p+\varepsilon_0+\frac{p-2}{p}\kappa m\varepsilon_3\right)\int_{\Rd}{\mathfrak q}(|\uu|)|\uu|^{p-2}\zeta_n^2dx\notag\\
&-(f_1(\varepsilon_1,\varepsilon_3)-\varepsilon_2)\int_{\Rd} v|\uu|^p\zeta_n^2 dx\notag\\
&+ \left(\frac{1}{\varepsilon_0}+\frac{m^2\kappa^2}{p^2\varepsilon_2}\right)\int_{\Rd}{\mathfrak q}(\zeta_n)|\uu|^p dx,
\end{align}
where function $f_1$ is defined by \eqref{funct-f1}.
Now, thanks to condition \eqref{pdis-1}, with $\gamma=0$, we can choose $\varepsilon_i>0$ ($i=0,1,2,3$) to ensure that the first three terms in the right hand side of \eqref{last} are nonpositive.
We refer the reader to Subsection \ref{app-a.3} for further details. Thus, we get
\begin{eqnarray*}
\lambda\int_{\Rd}|\uu|^p\zeta_n^2 dx\le C\int_{\Rd} \q(\zeta_n)|\uu|^{p} dx
\end{eqnarray*}
for some positive constant $C$ depending on $m,p,\kappa$. Then, arguing as in the proof of Theorem \ref{core}, letting $n \to\infty$ we deduce \eqref{caos}.\\
 In the second case, when $p \in (1,2)$ we use estimate \eqref{form-Q} to deduce that
 \begin{align}\label{last-1}
 \lambda\int_{\Rd}|\uu|^2w_{\varepsilon}^{p-2}\zeta_n^2 dx&\le [\varepsilon_0-g_1(\varepsilon_1,\varepsilon_3)]\sum_{k=1}^m \int_{\Rd} \q(u_k)w_{\varepsilon}^{p-2} \zeta_n^2 dx\notag\\
 &+[\varepsilon_2-f_2(\varepsilon_1,\varepsilon_3)]\int_{\Rd} v|\uu|^2 w_{\varepsilon}^{p-2} \zeta_n^2 dx\notag\\
&+ \left(\frac{1}{\varepsilon_0}+\frac{m^2\kappa^2}{p^2\varepsilon_2}\right)\int_{\Rd}\q(\zeta_n)w_{\varepsilon}^{p} dx,
 \end{align}
 where the functions $g_1$ and $f_2$ are defined by \eqref{funct-g1} and \eqref{funct-f2}.
 Also in this case, using condition \eqref{pdis}, with $\gamma=0$, we can choose $\varepsilon_i>0$ ($i=0,1,2,3$) to make the first two terms in the right hand side of \eqref{last-1} nonpositive and then we can conclude as in the first case. We refer the reader to Subsection \ref{app-a.4} for further details.
\end{proof}

\begin{example}
\label{example-1}
{\rm Let $\A$ be the operator defined in \eqref{opA-intro} with
\begin{equation}\label{diffusion}
q_{ij}(x)=(1+|x|^2)^{\frac{\alpha}{2}}\delta_{ij},\qquad\;\, i,j=1, \ldots,d,
\end{equation}
\begin{equation}\label{drift}
B^i(x)=(1+|x|^2)^\gamma e^{\frac{1}{2}(1+|x|^2)^\beta}A^i, \qquad\;\, i=1,\ldots,d,
\end{equation}
for every $x\in\Rd$, and let $V:\Rd\to \R^{m\times m}$ be a measurable function such that
\begin{equation}\label{exp-V}
\langle V(x)\xi ,\xi\rangle \ge v(x)|\xi|^2,\qquad\;\, |V(x)\xi|\le c_1v(x)|\xi|
\end{equation}
with $v(x):=e^{(1+|x|^2)^\beta}$ for $x\in\Rd$. Here, $\beta\in (0,\infty)$, $\alpha\in [0,2]$, $\gamma \in [0,\alpha/4]$ and $A^i$ is a symmetric $m\times m$ real-valued matrix for every $i=1,\ldots, d$ and $j,k=1, \ldots, m$.

It is easy to check that, for every $\varepsilon >0$, there exists a positive constant $C_{\varepsilon}$ such that
\begin{equation}\label{etoile}
|Q(x)|^{\frac{1}{2}}|\nabla v(x)|\le \varepsilon v(x)^{\frac{3}{2}}+C_{\varepsilon},\qquad\;\, x\in \Rd.
\end{equation}
Moreover the condition $\alpha \in [0,2]$ ensures that
$\langle Q\nabla v,\nabla v\rangle \le Cv^2\log^2v$ on $\Rd$ for some constant $C>0$. Since $|\nabla Q(y)|\le C_*(1+|y|^2)^{\frac{\alpha-1}{2}}$ for every $y\in\Rd$ and $\rho(x)=C_{**}(1+|x|^2)^{\frac{\alpha}{4}}e^{-\frac{1}{2}(1+|x|^2)^{\beta}}$ for every $x\in\Rd$ and some positive constants $C_*$ and $C_{**}$, it follows that
\begin{eqnarray*}
|\nabla Q(y)|\le C_*(1+(|x|+C_{**}(1+|x|^2)^{\frac{\alpha}{4}}e^{-\frac{1}{2}(1+|x|^2)^{\beta}})^2)^{\frac{\alpha-1}{2}},\qquad\;\,y\in B(x,\rho(x)).
\end{eqnarray*}
Consequently the inequality $\sup_{|x-y|\le\rho(x)}|\nabla Q(y)|\le c_2(1+|x|^2)^{\frac{\alpha}{2}}(\rho(x))^{-1}$ for every $x\in\Rd$ follows immediately, with a positive constant $c_2$, since the right-hand side grows faster than the left-hand side as $|x|$ tends to $\infty$.

On the other hand, if we set
\begin{eqnarray*}
A_0:= \max_{k\in\{1,\ldots,m\}}\sum_{h=1}^m\bigg (\sum_{i=1}^d (A^i_{hk})^2\bigg )^{\frac{1}{2}},
\end{eqnarray*}
then, by applying the Cauchy-Schwarz inequality, we get
\begin{align*}
\sum_{h,k=1}^m\bigg |\sum_{i=1}^d B^i_{hk}\eta_i^k\bigg |=&(1+|x|^2)^\gamma e^{\frac{1}{2}(1+|x|^2)^\beta}\sum_{h,k=1}^m\bigg |\sum_{i=1}^d A^i_{hk}\eta_i^k\bigg |\\
&\le A_0\sqrt{v(x)}(1+|x|^2)^\gamma\sum_{k=1}^m|\eta^k|\\
& \le A_0\sqrt{v(x)}(1+|x|^2)^{\gamma-\frac{\alpha}{4}}\sum_{k=1}^m \langle Q \eta^k,\eta^k\rangle^{\frac{1}{2}}\\
& \le A_0\sqrt{v(x)}\sum_{k=1}^m \langle Q \eta^k,\eta^k\rangle^{\frac{1}{2}}
\end{align*}
for every $x, \eta^k\in \Rd$. Denote by $\lambda_{A^i}$, $\Lambda_{A^i}$, respectively, the minimum and the maximum eigenvalues of $A^i$ and let $\lambda\in\Rd$ the vector with entries $\lambda_i=|\lambda_{A^i}|\vee|\Lambda_{A^i}|$ ($i=1,\ldots,d$). Then, again by the Cauchy-Schwarz inequality, we can estimate
\begin{align*}
\langle ({\rm div}B(x)) \xi,\xi \rangle
& \ge - (1+|x|^2)^{\gamma-1}e^{\frac{1}{2}(1+|x|^2)^\beta}\left(2\gamma +\beta (1+|x|^2)^{\beta}\right)\sum_{i=1}^d|\langle A^i \xi,\xi\rangle|| x_i|\\
& \ge -(1+|x|^2)^{\gamma-1}e^{\frac{1}{2}(1+|x|^2)^\beta}\left(2\gamma +\beta (1+|x|^2)^{\beta}\right)|\xi|^2|\lambda| |x|\\
& \ge -\vartheta v(x)|\xi|^2
\end{align*}
for every $x\in\Rd$ and $\xi\in\Rm$, where $\vartheta:= (2\gamma+\beta)|\lambda| c_0$ and
\begin{align*}
c_0:=\max_{x\in \Rd}\{|x|(1+|x|^2)^{\beta+\gamma-1}e^{-\frac{1}{2}(1+|x|^2)^\beta}\}.
\end{align*}
Therefore, Hypotheses \ref{hyp_0} are satisfied for every $p>c_0(2\gamma+\beta)|\lambda|$.
If we assume further that $4(p^2-\theta p)(p-1)-A_0^2 (2-p)^2(\sqrt{m}+m)^2>0$, then, by Theorem \ref{mainth} and
(see also Remark \ref{forallp}) and \cite[Theorem 2.7]{FL07}, the operator
\begin{align*}
\bm A_p\f =& {\rm div}((1+|x|^2)^{\frac{\alpha}{2}}\nabla \f)+(1+|x|^2)^\gamma e^{\frac{1}{2}(1+|x|^2)^\beta}\sum_{i=1}^dA^iD_i\f-V\f,
\end{align*}
with domain
\begin{align*}
D(\bm{A}_p)= \{\f\in W^{2,p}(\Rd ,\C^m): (1+|x|^2)^{\frac{\alpha}{2}}|D^2\f|,\,(1+|x|^2)^{\frac{\alpha}{4}}|\nabla \f|,\ |V\f|\in L^p(\Rd)\},
\end{align*}
generates an analytic contraction semigroup $\T_p(t)$ in $L^p(\Rd ,\C^m)$. Moreover, $\T_p(t)\f$$=\T_q(t)\f$ for all $\f\in L^p(\Rd ,\C^m)\cap L^q(\Rd ,\C^m)$ and  $q>c_0(2\gamma+\beta)|\lambda|$ such that $4(q^2-\theta q)(q-1)-A_0^2 (2-q)^2(\sqrt{m}+m)^2>0$.

An example of function $V$ satisfying \eqref{exp-V} for $m=2$ is
\begin{equation*}
V(x) =
\begin{pmatrix}
 e^{(1+|x|^2)^\beta} &\phi(x) \\
-\phi(x) & e^{(1+|x|^2)^\beta}
\end{pmatrix}
\end{equation*}
for every measurable function $\phi$ satisfying $|\phi(x)|\le e^{(1+|x|^2)^\beta}$ for all $x\in \Rd$.}
\end{example}

\section{The semigroup in $C_b(\Rd;\R^m)$}

In this section, under suitable assumptions on the coefficients of the operator $\A$, we prove that we can associate a semigroup ${\bm T}_{\infty}(t)$ with $\A$ in $C_b(\Rd;\C^m)$ that, under the assumptions of Theorem \ref{mainth}, coincides with the semigroup $\T(t)$ generated in $L^p(\Rd;\C^m)$ for each $p\in (1,\infty)$. Again, we can limit ourselves to considering functions with values in $\R^m$.
\begin{hyp}\label{hyp_cb}
\begin{enumerate}[\rm(i)]
\item
The coefficients $q_{ij}$ belong to $C^{1+\alpha}_{\rm loc}(\Rd)$ for $i, j=1, \ldots,d$ and $\langle Q(x)\xi,\xi\rangle \ge\mu_0|\xi|^2$ for every $x,\xi\in\Rd$ and some positive constant $\mu_0$; the functions $B^i$ $(i=1, \ldots,d)$ satisfy Hypotheses $\ref{hyp_0}(ii)$, $(iv)$; the function $V$ has entries in $C^{\alpha}_{\rm loc}(\Rd)$ for some $\alpha \in (0,1)$ and $\langle V(x)\xi,\xi\rangle \ge v|\xi|^2$, for every $x,\xi \in \Rd$ and some function $v\in C^{\alpha}_{\rm loc}(\Rd)$ bounded from below by a positive constant $c_0$;
\item
there exists a positive function $\varphi \in C^2(\Rd)$ blowing up as $|x|\to\infty$ such that $\mathcal{A}_v \varphi\le \lambda \varphi$ for some $\lambda>0$, where $\mathcal{A}_v= {\rm div}(QD)-vI$ $($see Hypothesis \ref{hyp_0}$(iv))$;
\item
the constants $\kappa, m$ and $p$ satisfy the condition $\kappa\le \sqrt{2 m^{-1}}$, if $p\in [2,\infty)$, and $p\kappa^2 m-4(p-1)^2\le 0$ if $p\in (1,2)$;
\item
if $p \in (1,2)$, then the function $v$ belongs to $C^2(\Rd)$, blows up at infinity and there exists a positive constant $M$ such that $(\mathcal{A}_v v)(x)\le M$ for every $x \in \Rd$.
\end{enumerate}
\end{hyp}

The assumptions on the matrix-valued function $Q$ and on $v$ guarantee that, for every $f\in C_b(\Rd)$, there exists a unique function $u\in C_b([0,\infty)\times\Rd)\cap C^{1,2}((0,\infty)\times\Rd)$ such that $D_tu={\mathcal A}_vu$ on $(0,\infty)\times\Rd$ and $u(0,\cdot)=f$ (see \cite{feller,newbook}). Setting
$S(t)f:=u(t,\cdot)$ for every $t\ge 0$, we define a semigroup of bounded operators on $C_b(\Rd)$ which satisfies the estimate
$\|S(t)\|_{{\mathcal L}(C_b(\Rd))}\le e^{-c_0t}$ (see, for instance, \cite[Proposition 2.2]{AngLor10Com}).

\begin{proposition}
\label{prop-3}
Fix $p\in (1,\infty)$ and $\f\in C_b(\Rd;\Rm)$. Then, under Hypotheses \ref{hyp_cb}, every locally in time bounded classical solution $\uu$ to the Cauchy problem
\begin{equation}\label{pb}
\left\{
\begin{array}{ll}
D_t \uu(t,x)= \A \uu(t,x),\qquad\;\, &(t,x)\in (0,\infty)\times \Rd,\\[1mm]
\uu(0,x)= \f(x), \;\, & x \in \Rd,
\end{array}
\right.
\end{equation}
satisfies the estimate
$|\uu(t,x)|^p \le (S(t)|\f|^p)(x)$ for every $(t,x)\in (0,\infty)\times \Rd$.
\end{proposition}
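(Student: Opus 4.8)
The plan is to compare $|\uu(t,\cdot)|^{p}$ with the scalar semigroup $S(t)$ by showing that, for every $\varepsilon>0$, the regularization $z_{\varepsilon}:=w_{\varepsilon}^{p}=(|\uu|^{2}+\varepsilon)^{p/2}$ is an \emph{approximate} subsolution of the scalar Cauchy problem driven by $\mathcal{A}_v$, and then to conclude by a parabolic maximum principle based on the Lyapunov function of Hypothesis \ref{hyp_cb}(ii), letting finally $\varepsilon\to0^{+}$. Since the coefficients of $\A$ are real we may assume $\uu$ is $\Rm$-valued; note that $w_{\varepsilon}\ge\sqrt{\varepsilon}$, so $z_{\varepsilon}\in C^{1,2}((0,\infty)\times\Rd)\cap C([0,\infty)\times\Rd)$ and, by the local-in-time boundedness of $\uu$, $z_{\varepsilon}$ is bounded on $[0,T]\times\Rd$ for every $T>0$.

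\textbf{Step 1: the pointwise subsolution inequality.} I would first use $D_{i}w_{\varepsilon}=w_{\varepsilon}^{-1}\langle\uu,D_{i}\uu\rangle$, the elementary identity $w_{\varepsilon}\,{\rm div}(Q\nabla w_{\varepsilon})=\sum_{k=1}^{m}\q(u_{k})-\q(w_{\varepsilon})+\langle\uu,\A_0\uu\rangle$, and the equation $D_{t}\uu=\A\uu$, to compute
\begin{align*}
D_{t}z_{\varepsilon}-\mathcal{A}_vz_{\varepsilon}=pw_{\varepsilon}^{p-2}\bigg[\sum_{i=1}^{d}\langle B^{i}D_{i}\uu,\uu\rangle-\langle V\uu,\uu\rangle-(p-2)\q(w_{\varepsilon})-\sum_{k=1}^{m}\q(u_{k})\bigg]+vw_{\varepsilon}^{p}.
\end{align*}
The potential term is handled via $\langle V\uu,\uu\rangle\ge v|\uu|^{2}=v(w_{\varepsilon}^{2}-\varepsilon)$ from Hypothesis \ref{hyp_0}(iv), which gives $-pw_{\varepsilon}^{p-2}\langle V\uu,\uu\rangle+vw_{\varepsilon}^{p}\le-(p-1)v|\uu|^{2}w_{\varepsilon}^{p-2}+\varepsilon vw_{\varepsilon}^{p-2}$. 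The coupling term is estimated, applying \eqref{drift-cont} with $\eta^{k}=\nabla u_{k}$ and Cauchy--Schwarz, by $|\uu|\,\kappa\sqrt{v}\sum_{k=1}^{m}\q(u_{k})^{1/2}$. For $p\ge2$ the term $(p-2)\q(w_{\varepsilon})$ is nonnegative and may be dropped; for $p\in(1,2)$ it is kept and controlled by the sharp pointwise bound $\q(w_{\varepsilon})\le(|\uu|^{2}/w_{\varepsilon}^{2})\sum_{k}\q(u_{k})$ of \eqref{num}. Then Young's inequality with the optimal parameter makes the coupling term absorbed by $\sum_{k}\q(u_{k})$ plus the spare amount $(p-1)v|\uu|^{2}w_{\varepsilon}^{p-2}$; this is precisely where Hypothesis \ref{hyp_cb}(iii) is used, the closing condition being $\tfrac{p}{4}\kappa^{2}m\le p-1$ if $p\ge2$ (true whenever $\kappa^{2}\le 2/m$) and $p\kappa^{2}m\le4(p-1)^{2}$ if $p\in(1,2)$. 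The upshot would be
\begin{align*}
D_{t}z_{\varepsilon}\le\mathcal{A}_vz_{\varepsilon}+\varepsilon\,v\,w_{\varepsilon}^{p-2}\qquad\text{on }(0,\infty)\times\Rd.
\end{align*}

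\textbf{Step 2: comparison and passage to the limit.} Fixing $T,\delta>0$ and letting $\varphi$ be as in Hypothesis \ref{hyp_cb}(ii), I would consider $W(t,x):=z_{\varepsilon}(t,x)-(S(t)z_{\varepsilon}(0,\cdot))(x)-\delta e^{\lambda t}\varphi(x)$ on $[0,T]\times\Rd$. Since $D_{t}(S(t)z_{\varepsilon}(0,\cdot))=\mathcal{A}_v(S(t)z_{\varepsilon}(0,\cdot))$ and $D_{t}(\delta e^{\lambda t}\varphi)-\mathcal{A}_v(\delta e^{\lambda t}\varphi)=\delta e^{\lambda t}(\lambda\varphi-\mathcal{A}_v\varphi)\ge0$, Step 1 gives $D_{t}W\le\mathcal{A}_vW+\varepsilon vw_{\varepsilon}^{p-2}$; moreover $W(0,\cdot)=-\delta\varphi<0$ and $W(t,x)\to-\infty$ as $|x|\to\infty$ uniformly in $t\in[0,T]$, because $z_{\varepsilon}$ and $S(\cdot)z_{\varepsilon}(0,\cdot)$ are bounded while $\varphi$ blows up. Hence, if $\sup_{[0,T]\times\Rd}W>0$ it is attained at some $(t_{0},x_{0})$ with $t_{0}>0$ and $x_{0}$ an interior spatial maximum, where $D_{t}W(t_{0},x_{0})\ge0$ and $\mathcal{A}_vW(t_{0},x_{0})\le-v(x_{0})W(t_{0},x_{0})$ (as $Q\ge0$); the differential inequality then forces $W(t_{0},x_{0})\le\varepsilon\,w_{\varepsilon}^{p-2}(t_{0},x_{0})$ after dividing by $v(x_{0})>0$. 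Since $w_{\varepsilon}^{p-2}\le\varepsilon^{(p-2)/2}$ when $p\in(1,2)$ and $w_{\varepsilon}^{p-2}\le(\sup_{[0,T]\times\Rd}|\uu|^{2}+1)^{(p-2)/2}$ when $p\ge2$, this yields $\sup_{[0,T]\times\Rd}W\le c_{T}(\varepsilon)$ with $c_{T}(\varepsilon)\to0^{+}$ as $\varepsilon\to0^{+}$. Consequently $z_{\varepsilon}(t,x)\le(S(t)z_{\varepsilon}(0,\cdot))(x)+\delta e^{\lambda t}\varphi(x)+c_{T}(\varepsilon)$; letting $\delta\to0^{+}$ removes the Lyapunov term and, letting then $\varepsilon\to0^{+}$ (using $z_{\varepsilon}(0,\cdot)=(|\f|^{2}+\varepsilon)^{p/2}\to|\f|^{p}$ uniformly on $\Rd$, $\|S(t)\|_{{\mathcal L}(C_{b}(\Rd))}\le e^{-c_{0}t}\le1$, and $z_{\varepsilon}(t,x)\to|\uu(t,x)|^{p}$), one gets $|\uu(t,x)|^{p}\le(S(t)|\f|^{p})(x)$ for $(t,x)\in(0,T]\times\Rd$; as $T$ is arbitrary the proof is complete.

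The main obstacle is Step 1: forcing the first-order coupling term to be absorbed by the diffusion terms with the exact constants of Hypothesis \ref{hyp_cb}(iii), particularly for $p\in(1,2)$, where one must use the sharp estimate $\q(w_{\varepsilon})\le(|\uu|^{2}/w_{\varepsilon}^{2})\sum_{k}\q(u_{k})$ and the optimal Young parameter to recover exactly $p\kappa^{2}m\le4(p-1)^{2}$. The remaining hypotheses play auxiliary roles: Hypothesis \ref{hyp_cb}(i) ensures the local regularity and ellipticity needed for the maximum principle and for $S(t)$ to be well defined, Hypothesis \ref{hyp_cb}(ii) provides the Lyapunov function, and Hypothesis \ref{hyp_cb}(iv)---in the range $p\in(1,2)$---furnishes the additional structural control on $v$ used in the handling of the remainder terms.
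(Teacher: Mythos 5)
Your proof is correct, and Step 1 coincides with the paper's computation for $p\in(1,2)$: the same identity for $D_tw_\varepsilon^p-\mathcal{A}_vw_\varepsilon^p$, the same use of \eqref{drift-cont}, Cauchy--Schwarz and Young with the optimal parameter $\sigma=p-1$, closing exactly on $p\kappa^2m\le 4(p-1)^2$, and leaving the remainder $\varepsilon v w_\varepsilon^{p-2}$. Where you genuinely diverge is in the comparison step and in the treatment of $p\ge 2$. The paper handles $p=2$ by the maximum principle of \cite[Theorem 3.1.3]{newbook} (since $\psi_{0,2}\le 0$ under $\kappa\le\sqrt{2m^{-1}}$) and then gets $p>2$ by Jensen's inequality through the sub-Markovian kernel of $S(t)$, whereas you absorb the drift directly for all $p\ge2$ by dropping $(p-2)\q(w_\varepsilon)\ge0$, which closes under the same condition $\kappa^2m\le2$; both are fine. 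More substantially, for $p\in(1,2)$ the paper concludes via the Duhamel-type bound $z_{\varepsilon,p}(t,x)\le\int_0^t\big(S(t-s)(\varepsilon vw_\varepsilon^{p-2})\big)(x)\,ds\le\varepsilon^{p/2}\int_0^t(S(t-s)v)(x)\,ds$, which is exactly where Hypothesis \ref{hyp_cb}(iv) enters (to apply $S(t)$ to the possibly unbounded $v$ and use $(S(t)v)(x)\le v(x)+Mt$), while your penalized maximum-principle argument with $W=z_\varepsilon-S(\cdot)z_\varepsilon(0,\cdot)-\delta e^{\lambda t}\varphi$ evaluates the differential inequality at an interior maximum and divides by $v(x_0)\ge c_0>0$, so the unbounded factor $v$ in the remainder cancels and Hypothesis \ref{hyp_cb}(iv) is never needed; you only use (i)--(iii). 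This makes your route slightly more self-contained (no appeal to the kernel representation, to \cite[Lemma 4.1.3]{newbook}, or to (iv)), at the cost of redoing a comparison argument the paper simply cites. One small correction to your closing remarks: you claim Hypothesis \ref{hyp_cb}(iv) is used for $p\in(1,2)$ in handling the remainder, but in your own argument it is not used at all --- that hypothesis is what the paper's Duhamel argument requires, not yours.
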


\begin{proof}
We first consider the case $p \in (1,2)$. A straightforward computation reveals that the function $z_{\varepsilon,p}=w_\varepsilon^p-S(\cdot)(|\f|^2+\varepsilon)^{\frac{p}{2}}$ is a classical solution to the equation
$D_t z_{\varepsilon,p}-\mathcal{A}_v z_{\varepsilon,p}=\psi_{\varepsilon,p}$
where
\begin{align*}
\psi_{\varepsilon,p}=&\frac{p(2-p)}{4}w_\varepsilon^{p-4}\q(|\uu|^2)-pw_\varepsilon^{p-2}\sum_{k=1}^m\q(u_k)\\
&+pw_\varepsilon^{p-2}\sum_{i=1}^d \langle B^iD_i\uu,\uu\rangle-pw_\varepsilon^{p-2}\langle V\uu,\uu\rangle+v w_\varepsilon^p.
\end{align*}
Using the hypotheses, Cauchy-Schwartz inequality which implies that $\q(|\uu|^2)\le 4w_\varepsilon^2 \sum_{k=1}^m\q(u_k)$ and Young's inequality, we infer that
\begin{align*}
\psi_{\varepsilon,p}\le p(1-p+\sigma)w_\varepsilon^{p-2}\sum_{k=1}^m\q(u_k)+ v w_\varepsilon^{p-2}\bigg [ |\uu|^2\bigg (1-p+\frac{p\kappa^2m}{4\sigma}\bigg )+\varepsilon\bigg ]
\end{align*}
for every $\sigma>0$.
Choosing $\sigma=p-1$ and taking Hypothesis \ref{hyp_cb}(iii) into account, we conclude that $D_tz_{\varepsilon,p}-\mathcal{A}_vz_{\varepsilon,p}\le \varepsilon v w_\varepsilon^{p-2}$, whence
\begin{align}
z_{\varepsilon,p}(t,x)\le \int_0^t (S(t-s)(\varepsilon v w_\varepsilon^{p-2}(s,\cdot)))(x)ds\le \varepsilon^{\frac{p}{2}}\int_0^t (S(t-s)v)(x)ds
\label{NUM}
\end{align}
for every $t>0$ and $x\in\Rd$, where we used the fact that, by Hypothesis \ref{hyp_cb}(iv) and \cite[(proof of) Lemma 4.1.3]{newbook}, we can apply each operator $S(t)$ to function $v$ and
$(S(t)v)(x)\le v(x)+Mt$ for every $t\ge 0$ and $x \in \Rd$. Letting $\varepsilon \to 0$ in \eqref{NUM}, the assertion follows in this case.

The case $p=2$ is simpler: it suffices to consider the function $z_{0,2}$ and observe that the function $\psi_{0,2}$ is nonpositive on $(0,\infty)\times\Rd$.
Thus, by applying a variant of the classical maximum principle (see \cite[Theorem 3.1.3]{newbook}) we deduce that $z(t,x) \le 0$ for every $(t,x)\in (0,\infty)\times \Rd$ and the assertion follows.

Finally, the case $p\in (2,\infty)$ follows easily from the case $p=2$ if we recall that $S(t)$ admits an integral representation with a kernel $p:(0,\infty)\times\Rd\times\Rd\to\R$  satisfying the condition $\|p(t,x,\cdot)\|_{L^1(\Rd)}\le 1$ for every $t>0$ and $x \in \Rd$ (see \cite[Theorem 1.2.5]{newbook}).
Hence, by the H\"older's inequality we conclude that $|\uu(t,\cdot)|^p\le (S(t)|\f|^2)^{\frac{p}{2}}\le S(t)|\f|^p$ in $\Rd$ for every $t>0$.
\end{proof}

The following theorem is an immediate consequence of Proposition \ref{prop-3}.
\begin{theorem}\label{exi_cb}
Under Hypotheses $\ref{hyp_cb}(i)$, $(ii)$, with $\kappa\le \sqrt{2m^{-1}}$, the Cauchy problem \eqref{pb}
admits a unique locally in time bounded classical solution $\uu$, for  every $\f\in C_b(\Rd;\Rm)$. Moreover,
\begin{equation}\label{est_inf}
|\uu(t,x)|\le e^{-c_0 t}\|\f\|_\infty, \qquad\;\, (t,x)\in [0,\infty) \times \Rd.
\end{equation}
\end{theorem}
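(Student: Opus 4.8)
The plan is to separate the statement into the existence of a locally in time bounded classical solution of \eqref{pb} and the pointwise estimate \eqref{est_inf}, the latter also accounting for uniqueness; as already observed, once such a solution is in hand the estimate is an immediate consequence of Proposition \ref{prop-3}.

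First I would handle the estimate and uniqueness. Under Hypotheses \ref{hyp_cb}(i) and (ii) together with $\kappa\le\sqrt{2m^{-1}}$, the hypotheses of Proposition \ref{prop-3} are satisfied with the choice $p=2$: for this value of $p$, Hypothesis \ref{hyp_cb}(iii) reduces precisely to $\kappa\le\sqrt{2m^{-1}}$ and Hypothesis \ref{hyp_cb}(iv) is vacuous. Hence every locally in time bounded classical solution $\uu$ of \eqref{pb} satisfies $|\uu(t,x)|^2\le (S(t)|\f|^2)(x)$ for all $(t,x)\in(0,\infty)\times\Rd$, and combining this with the facts that $S(t)$ maps nonnegative functions to nonnegative ones and that $\|S(t)\|_{\mathcal L(C_b(\Rd))}\le e^{-c_0 t}$, together with $\||\f|^2\|_\infty=\|\f\|_\infty^2$, yields \eqref{est_inf}. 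Uniqueness then follows at once: the difference of two locally in time bounded classical solutions of \eqref{pb} with the same datum $\f$ is again a locally in time bounded classical solution, now with datum ${\bm 0}$, so \eqref{est_inf} forces the two solutions to coincide.

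It remains to produce one locally in time bounded classical solution. The cleanest route is to invoke \cite{AALT}, whose standing assumptions are implied by Hypotheses \ref{hyp_cb}(i) and (ii): there the semigroup $\T_\infty(t)$ associated with $\A$ in $C_b(\Rd;\Rm)$ is constructed, and $\uu:=\T_\infty(\cdot)\f$ is the required solution. If instead a self-contained argument is preferred, one can approximate on balls: for each $n$ solve the Cauchy-Dirichlet problem for $D_t\uu_n=\A\uu_n$ on $(0,\infty)\times B(0,n)$ with a cut-off of $\f$ as initial datum (legitimate since, by Hypothesis \ref{hyp_cb}(i), the coefficients of $\A$ are locally H\"older continuous and $Q$ is uniformly elliptic), use the maximum principle on the bounded cylinders to get $\|\uu_n\|_\infty\le\|\f\|_\infty$, apply interior parabolic Schauder estimates to obtain uniform bounds in local H\"older spaces, and extract by a diagonal argument a subsequence converging in $C^{1,2}_{\mathrm{loc}}((0,\infty)\times\Rd;\Rm)$ to a classical solution $\uu$ of $D_t\uu=\A\uu$ on $(0,\infty)\times\Rd$; a localized barrier argument near $t=0$ then gives $\uu\in C_b([0,T]\times\Rd;\Rm)$ for every $T>0$ with $\uu(0,\cdot)=\f$. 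The only genuinely delicate point of the whole proof is this passage to the limit together with the continuity up to $t=0$, which is however a by now standard procedure for elliptic systems with unbounded coefficients; everything beyond it is immediate from Proposition \ref{prop-3}.
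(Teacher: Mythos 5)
Your overall architecture is the same as the paper's: uniqueness is obtained from Proposition \ref{prop-3} with $p=2$ (for which Hypothesis \ref{hyp_cb}(iii) is exactly $\kappa\le\sqrt{2m^{-1}}$ and (iv) is void), and existence is obtained by the approximation on balls, which is precisely the paper's compactness argument, with the missing details delegated to \cite{AALT}. The genuine gap is in the way you deduce \eqref{est_inf}. From $|\uu(t,x)|^2\le (S(t)|\f|^2)(x)$ and $\|S(t)\|_{\mathcal{L}(C_b(\Rd))}\le e^{-c_0t}$ you only get $|\uu(t,x)|^2\le e^{-c_0t}\|\f\|_\infty^2$, i.e. $|\uu(t,x)|\le e^{-c_0t/2}\|\f\|_\infty$ after taking square roots: the exponent is halved, so this does not yield \eqref{est_inf} as stated. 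Running the same comparison with another admissible exponent does not help: Proposition \ref{prop-3} gives $|\uu|\le e^{-c_0t/p}\|\f\|_\infty$, which is worse for $p>2$, and $p\in(1,2)$ is obstructed by the additional requirements in Hypotheses \ref{hyp_cb}(iii)--(iv). The paper does not derive \eqref{est_inf} from Proposition \ref{prop-3} at all: the estimate is inherited, after passing to the limit, from the uniform decay bound satisfied by the approximating solutions $\uu_n$ of the Cauchy--Dirichlet problems on the balls $B(0,n)$, quoted from \cite[Theorem 2.8]{AALT}; that is, it comes out of the existence construction itself, through a maximum-principle argument on $|\uu_n|^2$ which uses $\langle V\xi,\xi\rangle\ge v|\xi|^2\ge c_0|\xi|^2$ and $\kappa\le\sqrt{2m^{-1}}$ to absorb the first-order coupling. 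You would have to supply (or cite) such an argument to get the stated decay rate.

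Two smaller remarks. First, in your self-contained existence sketch the bound $\|\uu_n\|_\infty\le\|\f\|_\infty$ is not a plain componentwise maximum principle: because the system is coupled, it again requires working with $|\uu_n|^2$ (or $w_\varepsilon$) under the same structural condition on $\kappa$, exactly as in Proposition \ref{prop-3} and Proposition \ref{gen-Linfty}. Second, your uniqueness argument is fine but does not need \eqref{est_inf}: applying the $p=2$ comparison of Proposition \ref{prop-3} to the difference of two locally in time bounded solutions already gives $|\uu_1-\uu_2|^2\le S(t)|{\bm 0}|^2=0$, which is how the paper argues.
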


\begin{proof}
The proof of this result is standard. The uniqueness of the locally in time bounded classical solution $\uu$ follows from Proposition \ref{prop-3}, with $p=2$.
The existence can be obtained by compactness, considering the sequence $(\uu_n)\subset C^{1+\alpha/2, 2+\alpha}_{\rm loc}((0,\infty)\times \Rd)\cap C_b([0,\infty)\times\Rd)$ of functions
such that $D_t\uu_n=\A\uu_n$ on $(0,\infty)\times B(0,n)$, $\uu_n$ vanishes on $(0,\infty)\times\partial B(0,n)$ and equals function $\f$ on $\{0\}\times B(0,n)$ for every $n\in\N$. Each function $\uu_n$ also satisfies the estimate $\|\uu_n(t,\cdot)\|_{\infty}\le e^{-c_0t}$ for every $t>0$. We refer the reader to \cite[Theorem 2.8]{AALT} for the missing details.
\end{proof}

Thanks to Theorem \ref{exi_cb} we can associate a semigroup ${\bm T}_{\infty}(t)$ to  $\A$ in $C_b(\Rd,\Rm)$, by setting ${\bm T}_\infty(\cdot)\f:=\uu$, where $\uu$ is the solution to the Cauchy problem
\eqref{pb} provided by Theorem \ref{exi_cb}. Clearly,
$\|{\bm T}_\infty(t)\|_{{\mathcal L}(C_b(\Rd;\Rm))}\le e^{-c_0t}$ for every $t>0$. This semigroup can be easily extended to $C_b(\Rd;\C^m)$ in a straightforward way.

In order to show that the semigroups $\T_p(t)$ ($p \ge p_0$) are consistent also with $\T_\infty(t)$ on $C_b(\Rd;\Rm)$ and, hence, on $C_b(\Rd;\C^m)$ we show that the domain $D(\A)$ of the weak generator of $\T_\infty(t)$ coincides with the maximal domain of $\A$ in $C_b(\Rd;\Rm)$.
The notion of weak generator ${\bm A}$ has been extended to vector-valued elliptic operators with unbounded coefficients in \cite{DelLor11OnA,AAL_Inv1} mimicking the classical definition of infinitesimal generator of a strongly continuous semigroup. Its domain is the set of all functions $\uu\in C_b(\Rd;\R^m)$ such that the function $t\mapsto t^{-1}(\T_{\infty}(t)\uu-\uu)$ is bounded in $(0,1]$ with values in $C_b(\Rd;\Rm)$ and it pointwise converges on $\Rd$ to a continuous function, which defines ${\boldsymbol A}\uu$.

\begin{proposition}\label{gen-Linfty}
Under Hypotheses $\ref{hyp_cb}(i)$, $(ii)$, with $\kappa\le \sqrt{2m^{-1}}$, the weak generator of the semigroup $\T(t)$ coincides with the realization of the operator $\A$ on $D_{\rm max}(\A)$, where
\begin{eqnarray*}
D_{\rm max}(\A)=\bigg\{\uu \in C_b(\Rd;\R^m)\cap\bigcap_{1\le p<\infty}W^{2,p}_{\rm loc}(\Rd,\R^m): \A\uu \in C_b(\Rd;\R^m)\bigg\}.
\end{eqnarray*}
Moreover, for every $\lambda>0$ and $\f\in C_b(\Rd;\Rm)$, the function $R(\lambda)\f$ defined by
\begin{equation}
(R(\lambda)\f)(x)=\int_0^{\infty}e^{-\lambda t}(\T_{\infty}(t)\f)(x)dt,\qquad\;\,x\in\Rd,
\label{R-lambda}
\end{equation}
belongs to $D_{\rm max}(\A)$ and solves the equation $\lambda\uu-\A\uu=\f$.
\end{proposition}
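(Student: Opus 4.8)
The plan is to prove that, for each $\lambda>0$, the operator $R(\lambda)$ defined in \eqref{R-lambda} is a bijection from $C_b(\Rd;\Rm)$ onto $D_{\rm max}(\A)$ whose inverse is $\lambda I-\A$, and then to identify $R(\lambda)$ with the resolvent of the weak generator ${\boldsymbol A}$ of $\T_\infty(t)$; both assertions of the statement then follow at once. First I would observe that, by the estimate \eqref{est_inf}, the integral defining $R(\lambda)\f$ converges absolutely and uniformly on $\Rd$, so that $R(\lambda)\f\in C_b(\Rd;\Rm)$ with $\|R(\lambda)\f\|_\infty\le(\lambda+c_0)^{-1}\|\f\|_\infty$.

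The core step is to show that $R(\lambda)\f\in D_{\rm max}(\A)$ and $\lambda R(\lambda)\f-\A R(\lambda)\f=\f$. Writing $\uu(t,\cdot)=\T_\infty(t)\f$, which by Theorem \ref{exi_cb} is the locally-in-time bounded classical solution of \eqref{pb}, I would first work with the truncated Laplace transform $R_\delta(\lambda)\f=\int_\delta^\infty e^{-\lambda t}\uu(t,\cdot)\,dt$, $\delta>0$ (the truncation is needed because near $t=0$ only $\uu(0,\cdot)=\f\in C_b$ is available, with no extra spatial regularity). Interior Schauder (or $L^p$) parabolic estimates on parabolic cylinders, combined with \eqref{est_inf}, give uniform-in-$t$ bounds for $\|\uu(t,\cdot)\|_{C^{2}(\overline B)}$ on $[\delta,\infty)$ for each ball $B$, with exponential decay as $t\to\infty$; this lets me differentiate under the integral sign and, integrating by parts in $t$ using $D_t\uu=\A\uu$, obtain $\A R_\delta(\lambda)\f=-e^{-\lambda\delta}\uu(\delta,\cdot)+\lambda R_\delta(\lambda)\f$ (the boundary term at $\infty$ vanishing by \eqref{est_inf}). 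Letting $\delta\to0^+$, one has $R_\delta(\lambda)\f\to R(\lambda)\f$ and $e^{-\lambda\delta}\uu(\delta,\cdot)\to\f$ uniformly on $\Rd$, hence $\A R_\delta(\lambda)\f\to\lambda R(\lambda)\f-\f$ in $L^p_{\rm loc}$ for every $p$; since $\A$, regarded from $W^{2,p}_{\rm loc}(\Rd;\Rm)$ to $L^p_{\rm loc}(\Rd;\Rm)$, is closed by interior elliptic $L^p$-estimates, this gives $R(\lambda)\f\in W^{2,p}_{\rm loc}(\Rd;\Rm)$ for every $p\in(1,\infty)$ with $\lambda R(\lambda)\f-\A R(\lambda)\f=\f$, and then $\A R(\lambda)\f=\lambda R(\lambda)\f-\f\in C_b(\Rd;\Rm)$, i.e. $R(\lambda)\f\in D_{\rm max}(\A)$. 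I expect this to be the main obstacle: it requires the uniform-in-time interior regularity of $\T_\infty(t)\f$ and a careful justification of the termwise differentiation and integration by parts in the Laplace transform.

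Next I would establish injectivity of $\lambda I-\A$ on $D_{\rm max}(\A)$. If $\ww\in D_{\rm max}(\A)$ satisfies $\lambda\ww=\A\ww$, then, since the coefficients of $\A$ are at least $C^\alpha_{\rm loc}$ (Hypothesis \ref{hyp_cb}(i)) and $\A\ww=\lambda\ww$ is continuous, elliptic Schauder regularity yields $\ww\in C^{2+\alpha}_{\rm loc}(\Rd;\Rm)$, so $(t,x)\mapsto e^{\lambda t}\ww(x)$ is a locally-in-time bounded classical solution of \eqref{pb} with datum $\ww$; by the uniqueness part of Theorem \ref{exi_cb} it coincides with $\T_\infty(t)\ww$, and then \eqref{est_inf} forces $e^{\lambda t}\|\ww\|_\infty\le e^{-c_0t}\|\ww\|_\infty$ for every $t>0$, whence $\ww={\bm 0}$. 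Together with the previous step, this shows that $\lambda I-\A\colon D_{\rm max}(\A)\to C_b(\Rd;\Rm)$ is bijective with inverse $R(\lambda)$.

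Finally, I would invoke the characterization of the weak generator through its resolvent, as developed for vector-valued operators with unbounded coefficients in \cite{DelLor11OnA,AAL_Inv1}: the Laplace transform $R(\lambda)$ is precisely the resolvent of ${\boldsymbol A}$, i.e. $R(\lambda)\g\in D({\boldsymbol A})$ with $(\lambda I-{\boldsymbol A})R(\lambda)\g=\g$ for all $\g\in C_b(\Rd;\Rm)$, and $R(\lambda)(\lambda\uu-{\boldsymbol A}\uu)=\uu$ for $\uu\in D({\boldsymbol A})$. Combining this with the bijectivity just established gives $R(\lambda)=(\lambda I-{\boldsymbol A})^{-1}=(\lambda I-\A|_{D_{\rm max}(\A)})^{-1}$, so the two operators share the domain $R(\lambda)(C_b(\Rd;\Rm))=D_{\rm max}(\A)$ and agree on it, i.e. ${\boldsymbol A}=\A$ on $D_{\rm max}(\A)$. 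The last assertion of the statement — that $R(\lambda)\f$ belongs to $D_{\rm max}(\A)$ and solves $\lambda\uu-\A\uu=\f$ — is exactly the content of the core step.
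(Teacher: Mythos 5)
Your proposal is correct, and its global architecture (show that the Laplace transform $R(\lambda)$ maps $C_b(\Rd;\Rm)$ into $D_{\rm max}(\A)$ and inverts $\lambda I-\A$ there, prove injectivity of $\lambda I-\A$ on $D_{\rm max}(\A)$, then identify $R(\lambda)$ with the resolvent of the weak generator) is the same as the paper's; the difference lies in how the two halves are carried out. For the regularity/resolvent-equation step the paper simply argues ``as in the proof of \cite[Proposition 2.2]{AAL_Inv1}'', whereas you spell out the argument (truncated Laplace transform, interior parabolic Schauder estimates uniform in $t\ge\delta$ thanks to \eqref{est_inf}, integration by parts in time, and closedness of $\A$ via interior elliptic $L^p$ estimates); this is essentially the delegated argument made explicit, and it is sound. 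For injectivity the routes genuinely differ: the paper works at the elliptic level, computing $2\lambda|\uu|^2-\mathcal{A}_v|\uu|^2=\psi$ for a solution $\uu\in D_{\rm max}(\A)$ of $\lambda\uu=\A\uu$, showing $\psi\le 0$ by Young's inequality and $\kappa\le\sqrt{2m^{-1}}$, and concluding $\uu=\bm 0$ by the maximum principle of \cite[Theorem 3.1.6]{newbook} (which is where the Lyapunov function of Hypothesis \ref{hyp_cb}(ii) enters); you instead upgrade $\ww$ to $C^{2+\alpha}_{\rm loc}$ by Schauder regularity (legitimate, since $\ww\in\bigcap_pW^{2,p}_{\rm loc}$ gives $\ww\in C^{1,\alpha}_{\rm loc}$, so the right-hand side and the lower-order coupling terms are locally H\"older) and feed the classical solution $(t,x)\mapsto e^{\lambda t}\ww(x)$ into the uniqueness statement of Theorem \ref{exi_cb} together with \eqref{est_inf}. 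Since Theorem \ref{exi_cb} rests on Proposition \ref{prop-3} with $p=2$, which encodes exactly the same pointwise dissipativity computation and the same condition $\kappa\le\sqrt{2m^{-1}}$, your route reuses already-proved parabolic results instead of redoing the Bernstein-type computation; what it costs is the extra Schauder bootstrap and reliance on the precise uniqueness class (locally in time bounded classical solutions), while the paper's elliptic argument is more self-contained and avoids any further regularity of $\ww$ beyond $W^{2,p}_{\rm loc}$.
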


\begin{proof}
Taking \eqref{est_inf} into account and arguing as in the proof of \cite[Proposition 2.2]{AAL_Inv1}
the inclusion $D(\boldsymbol{A})\subset D_{\rm max}(\A)$ can be easily proved. Thus, $D(\boldsymbol{A})= D_{\rm max}(\A)$ if and only if $\lambda I-\boldsymbol A$ is injective on $D_{\rm max}(\A)$. To prove the injectivity of this operator, we fix a function $\uu\in D_{\rm max}(\A)$ such that $\lambda\uu-\A\uu={\bm 0}$. A straightforward computation reveals that $2\lambda |\uu|^2-{\mathcal A}_v|\uu|^2=\psi$, where
\begin{eqnarray*}
\psi=-2\sum_{k=1}^m\q(u_k)+2\sum_{i=1}^d\langle B^iD_i\uu, \uu\rangle-2\langle V\uu,\uu\rangle+v|\uu|^2.
\end{eqnarray*}
Using Hypotheses \ref{hyp_cb}(i), (ii), the condition $\kappa\le \sqrt{2m^{-1}}$ and Young's inequality to estimate
$\sum_{i=1}^d\langle B^iD_i\uu, \uu\rangle$ $\le \sum_{k=1}^m\q(u_k)+\frac{\kappa m^2}{4}v|\uu|^2$, we conclude first that $\psi\le 0$ on $\Rd$ and then,
applying the maximum principle in \cite[Theorem 3.1.6]{newbook}, that $\uu=\bm 0$.
\end{proof}

\begin{proposition}
\label{prop-5}
Suppose that the assumptions of Theorem $\ref{mainth}$ are satisfied for every $p\ge p_0$ and some $p_0>1$. Further assume that the functions $q_{ij}$ belongs to $C^{1+\alpha}_{\rm loc}(\Rd)$ $(i,j=1,\ldots,d)$ and
$\langle Q(x)\xi,\xi\rangle\ge\mu_0|\xi|^2$ for every $x,\xi\in\Rd$ and some positive constant $\mu_0>0$. Finally, suppose that Hypotheses $\ref{hyp_cb}(ii)$-$(iv)$ are satisfied. Then, for every $\bm f\in C_b(\Rd;\C^m)\cap L^p(\Rd,\C^m)$, $p\ge p_0$ and $t>0$ it holds that ${\bm T}_p(t)\f={\bm T}_{\infty}(t)\f$.
\end{proposition}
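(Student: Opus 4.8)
The goal is to identify, for $\f\in C_b(\Rd;\C^m)\cap L^p(\Rd;\C^m)$, the orbit $\T_p(t)\f$ produced by the analytic semigroup of Theorem \ref{mainth} with the orbit $\T_\infty(t)\f$ produced in $C_b(\Rd;\C^m)$ by Theorem \ref{exi_cb}. The natural bridge is the resolvent. First I would fix $\lambda>0$ large and recall from Proposition \ref{gen-Linfty} that the function $R(\lambda)\f$ defined by the Laplace-transform formula \eqref{R-lambda} belongs to $D_{\rm max}(\A)$ and satisfies $\lambda R(\lambda)\f-\A R(\lambda)\f=\f$. The plan is to show that $R(\lambda)\f$ coincides with $(\lambda I-{\bm A}_p)^{-1}\f$, the resolvent of the $L^p$-generator; once this is done, the equality of the two semigroups follows from the standard fact that a strongly continuous semigroup is determined by the resolvent of its generator via the representation formula (e.g.\ \cite[Corollary III.5.5]{engnagel}), applied to $\T_p(t)$, together with the integral formula \eqref{R-lambda} for $\T_\infty(t)$, after noting that both sides depend continuously on $\f$ in $L^p$.

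The key step is therefore to verify that $R(\lambda)\f\in D_p$ and that it is the $L^p$-solution of $\lambda\uu-\A\uu=\f$. I would proceed by density: take $\f\in C_c^\infty(\Rd;\C^m)$ first. By Proposition \ref{prop-3} (valid since Hypotheses \ref{hyp_cb} hold) one has $|\T_\infty(t)\f|^p\le (S(t)|\f|^p)$ pointwise, and since $S(t)$ is sub-Markovian, $\|\T_\infty(t)\f\|_p^p\le\int_{\Rd}(S(t)|\f|^p)\,dx\le\|\,|\f|^p\|_1=\|\f\|_p^p$; hence $t\mapsto\T_\infty(t)\f$ is an $L^p$-valued function bounded by $\|\f\|_p$, so the integral in \eqref{R-lambda} converges in $L^p$ and $R(\lambda)\f\in L^p(\Rd;\C^m)$ with $\|R(\lambda)\f\|_p\le\lambda^{-1}\|\f\|_p$. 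Now $R(\lambda)\f\in D_{\rm max}(\A)\subset\bigcap_{q<\infty}W^{2,q}_{\rm loc}(\Rd;\C^m)$ and $\A R(\lambda)\f=\lambda R(\lambda)\f-\f\in L^p(\Rd;\C^m)$; by the interpolative estimate \eqref{interpolazione} together with the bound $\|\uu\|_p\le c_0^{-1}\|v\uu\|_p$ this forces $\A_0 R(\lambda)\f, VR(\lambda)\f\in L^p$, i.e.\ $R(\lambda)\f\in D_p$. Consequently $R(\lambda)\f=(\lambda I-{\bm A}_p)^{-1}\f$ because $\lambda I-{\bm A}_p$ is injective on $D_p$ (Theorem \ref{mainth}, since $\lambda>0$ and ${\bm A}_p$ is dissipative). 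The equality $\T_p(t)\f=\T_\infty(t)\f$ for $\f\in C_c^\infty(\Rd;\C^m)$ then follows, and a density argument in $L^p\cap C_b$ — using $\|\T_p(t)\|_{\mathcal L(L^p)}\le 1$ and $\|\T_\infty(t)\f\|_p\le\|\f\|_p$ — extends it to all $\f\in C_b(\Rd;\C^m)\cap L^p(\Rd;\C^m)$, with the convergence understood in $L^p$; pointwise agreement is then automatic from continuity of representatives. Finally, consistency across different $p\ge p_0$ is already contained in Theorem \ref{mainth}, so the identification with $\T_\infty(t)$ holds simultaneously for all $p\ge p_0$.

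The main obstacle I anticipate is the verification that $R(\lambda)\f$ genuinely lands in $L^p$ and that the Laplace integral \eqref{R-lambda} converges in the $L^p$-norm (not merely pointwise): this is exactly where Proposition \ref{prop-3} and the sub-Markovian property of $S(t)$ are essential, and it is the only place where one needs the full strength of Hypotheses \ref{hyp_cb} (in particular (iii) and (iv), which underlie Proposition \ref{prop-3}). A secondary technical point is ensuring that the density argument is carried out in the right topology — approximating $\f\in C_b\cap L^p$ by $C_c^\infty$ functions in $L^p$ only, since one cannot approximate in the sup norm — but this is harmless because both semigroups are contractive (or at least norm-bounded by $1$) on $L^p$.
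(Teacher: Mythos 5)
Your overall strategy (identify the resolvents and then use uniqueness of the Laplace transform) is sound and is close in spirit to the paper's, but the direction in which you carry out the identification contains a genuine gap. You want to conclude that $R(\lambda)\f\in D_p$ from the facts that $R(\lambda)\f\in L^p(\Rd;\C^m)\cap D_{\rm max}(\A)$ and $\A R(\lambda)\f=\lambda R(\lambda)\f-\f\in L^p(\Rd;\C^m)$, and you justify this by the interpolative estimate \eqref{interpolazione} and the bound $\|\uu\|_p\le c_0^{-1}\|v\uu\|_p$. This does not work: \eqref{interpolazione} controls the drift term by $\|\A_0\uu\|_p$ and $\|V\uu\|_p$, i.e.\ it gives the inclusion $D_p\subset D_{\rm max}({\bm A}_p)$, and it cannot be reversed to split $\A\uu\in L^p$ into $\A_0\uu\in L^p$ and $V\uu\in L^p$ separately. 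The converse inclusion $D_{\rm max}({\bm A}_p)\subset D_p$ is precisely the content of the (unnumbered) proposition in Section 3, whose proof is a substantial argument (injectivity of $\lambda I-\A$ on the maximal domain via the cut-off functions $\zeta_n$ and conditions \eqref{pdis}, \eqref{pdis-1}). Without invoking that proposition, your appeal to the dissipativity of ${\bm A}_p$ only gives injectivity of $\lambda I-\A$ on $D_p$, not on the larger set where $R(\lambda)\f$ is known to lie, so the identity $R(\lambda)\f=(\lambda I-{\bm A}_p)^{-1}\f$ does not yet follow. The paper sidesteps this by arguing in the opposite direction: for $\f$ continuous with compact support it takes $\uu=(\lambda I-{\bm A}_p)^{-1}\f$ (independent of $p\ge p_0$ by consistency), proves that $\uu$ is bounded by estimating $\|\uu\|_{L^q(B(0,r);\Rm)}\le\lambda^{-1}\|\f\|_{L^q(B(0,R);\Rm)}$ and letting $q\to\infty$, concludes $\uu\in D_{\rm max}(\A)$, and then uses the uniqueness in $C_b$ furnished by Proposition \ref{gen-Linfty}. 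Your proof becomes correct if you either switch to that direction, or explicitly cite the proposition $D_p=D_{\rm max}({\bm A}_p)$ in place of the interpolation argument.

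A secondary point: the inequality $\int_{\Rd}(S(t)|\f|^p)\,dx\le\|\f\|_p^p$ does not follow from sub-Markovianity alone, which controls $S(t)$ on the $L^\infty$ side; what you need is the dual property $\int_{\Rd}S(t)g\,dx\le\int_{\Rd}g\,dx$ for $g\ge 0$. This is true here, but because $\mathcal A_v$ is formally self-adjoint (symmetric kernel), or equivalently because $S(t)$ is consistent with the $L^1$-contraction semigroup $S_1(t)$ constructed in the proof of Theorem \ref{thm_hyp}; this should be stated, since it is also the only place where your Laplace integral \eqref{R-lambda} is shown to converge in $L^p$. With these two repairs the remaining steps (Laplace-transform uniqueness, density of compactly supported data using the $L^p$ bounds on both semigroups) are fine and essentially parallel to the paper's conclusion of the proof.
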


\begin{proof}
As we have already stressed, we can limit ourselves to proving that $\T_p(t)\f=\T_q(t)\f$ for every $\f\in C_b(\Rd;\Rm)\cap L^p(\Rd;\Rm)$.
We fix $\lambda>0$, $\f\in C_b(\Rd;\Rm)$, with compact support, and consider the resolvent equation $\lambda\uu-\A\uu=\f$.
By Theorem \ref{mainth} and Proposition \ref{gen-Linfty} the previous equation admits a unique solution $\uu_r\in D({\bm A}_r)$, for every $r\in [p_0,\infty)$, and a unique solution ${\bf u}_{\infty}\in D_{\max}(\A)$.
Since $\uu_r$ is the Laplace transform of the semigroup $\T_r(t)$, the consistency of the semigroups ${\bm T}_p(t)$ and ${\bm T}_q(t)$
implies that $\uu_p=\uu_q$ for $p,q\in [p_0,\infty)$. Hence, we can simply write $\uu$ instead of $\uu_p$, and $\uu\in W^{2,q}_{\rm loc}(\Rd;\Rm)$ for every $q\in [p_0,\infty)$, so that it is continuous over $\Rd$. To prove that it is also bounded on $\Rd$, we fix $r>0$, arbitrarily, and $R>0$ such that ${\rm supp}(\f)\subset B(0,R)$. Since the operator ${\bm A}_q$ is dissipative,
we can write
$\|\uu\|_{L^q(B(0,r);\Rm)}\le\|\uu\|_q\le\lambda^{-1}\|\f\|_{L^q(B(0,R);\Rm)}$.
Letting $q$ tend to $\infty$ and using the arbitrariness of $r>0$, we conclude that $\uu$ is bounded over $\Rd$, so that it belongs to
$C_b(\Rd;\Rm)$. Since $\A u=\lambda\uu-\f$, the function $\A u$ is bounded and continuous over $\Rd$. Thus, $\uu\in D_{\max}(\A)$ and the equality $\uu=\uu_{\infty}$ follows.

Next, we fix a function $\bm\psi\in C^{\infty}_c(\Rd;\Rm)$, write $\langle \uu,\bm\psi\rangle=
\langle\uu_{\infty},\bm\psi\rangle$ and integrate both sides of this equality over $\Rd$. Taking \eqref{R-lambda} into account,
we deduce that
\begin{align*}
\int_{\Rd}\uu\bm\psi dx=&\int_0^{\infty}e^{-\lambda t}dt\int_{\Rd}\bm\psi \T_p(t)\f dx\\
=&\int_0^{\infty}e^{-\lambda t}dt\int_{\Rd}\bm\psi\T_{\infty}(t)\f dx =\int_{\Rd}\uu_{\infty}\bm\psi dx.
\end{align*}
From the uniqueness of the Laplace transform, we conclude that
\begin{eqnarray*}
\int_{\Rd}\bm\psi\T_p(t)\f dx=\int_{\Rd}\bm\psi\T_{\infty}(t)\f dx,
\end{eqnarray*}
from which the equality $\T_p(t)\f=\T_{\infty}(t)\f$ follows for every $t>0$. To remove the condition on the support of $\f$,  we observe that
every function $\f\in C_b(\Rd;\Rm)\cap L^p(\Rd;\Rm)$ is the limit in $L^p(\Rd;\Rm)$ of a sequence $(\f_n)\subset C^{\infty}_c(\Rd;\Rm)$ which is bounded in $C_b(\Rd;\Rm)$ and converge to $\f$ pointwise in $\Rd$. Taking the limit as $n$ tends to $\infty$ in the equality
$\T_p(t)\f_n=\T_{\infty}(t)\f_n$, we complete the proof.
\end{proof}

\begin{theorem}
\label{thm_hyp}
Let the assumptions of Proposition $\ref{prop-5}$ be satisfied.
Then, the semigroup $\T(t)$ maps $L^p(\Rd;\C^m)$ into $L^q(\Rd;\C^m)$ for every $t>0$ and $p_0\le p\le q\le\infty$, and, for every $T>0$, there exists a positive constant $C=C(T,p,q)$ such that
\begin{equation}\label{hyp_est}
\|\T(t)\f\|_{L^q(\Rd;\C^m)}\le Ct^{-\frac{d}{2}\left (\frac{1}{p}-\frac{1}{q}\right )}\|\f\|_{L^p(\Rd;\C^m)}, \qquad\;\, \f \in L^p(\Rd;\C^m),\;\,t\in (0,T].
\end{equation}
\end{theorem}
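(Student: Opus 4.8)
The plan is to dominate $|\T(t)\f|$ pointwise by a power of the scalar semigroup $S(t)$ applied to $|\f|^p$, to establish that $S(t)$ is ultracontractive with the Euclidean rate $t^{-d/2}$, and then to transfer this to the vector-valued semigroup by an elementary H\"older-type manipulation together with a density argument. Throughout, by splitting into real and imaginary parts it suffices to work with real-valued data. To begin, fix $\f\in C_b(\Rd;\Rm)\cap L^p(\Rd;\Rm)$: by Proposition \ref{prop-5} one has $\T_p(t)\f=\T_\infty(t)\f$, and Proposition \ref{prop-3} gives $|\T(t)\f(x)|^p\le(S(t)|\f|^p)(x)$ for all $(t,x)\in(0,\infty)\times\Rd$, i.e.\ $|\T(t)\f|\le(S(t)|\f|^p)^{1/p}$ a.e.\ on $\Rd$.

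\emph{The main step} is to prove that for every $r\in[1,\infty]$ and $t>0$
\[
\|S(t)g\|_{L^r(\Rd)}\le C_r\,t^{-\frac d2\left(1-\frac1r\right)}\|g\|_{L^1(\Rd)},\qquad\;\,g\in L^1(\Rd).
\]
The semigroup $S(t)$ is represented by a positive kernel with $\|p(t,x,\cdot)\|_{L^1(\Rd)}\le1$ (recalled in the proof of Proposition \ref{prop-3}; see \cite[Theorem 1.2.5]{newbook}), hence it is sub-Markovian and a contraction on every $L^r(\Rd)$; moreover, since $Q$ is symmetric and $v$ is real, $S(t)$ is self-adjoint on $L^2(\Rd)$ and its generator satisfies $\langle-\mathcal{A}_v u,u\rangle=\int_{\Rd}\langle Q\nabla u,\nabla u\rangle\,dx+\int_{\Rd}v|u|^2\,dx\ge\mu_0\int_{\Rd}|\nabla u|^2\,dx$ by the lower ellipticity bound. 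Combining this with the classical Nash inequality $\|u\|_2^{2+4/d}\le C_N\|\nabla u\|_2^2\|u\|_1^{4/d}$ and running Nash's argument yields $\|S(t)\|_{L^1\to L^2}\le Ct^{-d/4}$; self-adjointness and the semigroup law $S(t)=S(t/2)S(t/2)$ upgrade this to $\|S(t)\|_{L^1\to L^\infty}\le Ct^{-d/2}$, and Riesz--Thorin interpolation with the $L^1$-contractivity gives the displayed estimate for all $r\in[1,\infty]$. I expect this step to be the main obstacle: because $Q$ is bounded only from below, Gaussian upper bounds for the kernel $p(t,x,y)$ are unavailable, but this is immaterial, since the on-diagonal estimate $\|p(t,\cdot,\cdot)\|_\infty\le Ct^{-d/2}$ rests solely on the Nash (equivalently Sobolev) inequality for the form of $\mathcal{A}_v$, which the bound $\langle-\mathcal{A}_v u,u\rangle\ge\mu_0\|\nabla u\|_2^2$ already provides. (Alternatively, the ultracontractivity of $S(t)$ may be quoted from the scalar theory of operators with unbounded coefficients, cf.\ \cite{mpps,newbook}.)

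\emph{Finally}, I would combine the two ingredients. For $\f\in C_b(\Rd;\Rm)\cap L^p(\Rd;\Rm)$ and $p\le q\le\infty$, set $g=|\f|^p\in L^1(\Rd)$, so that $\|g\|_1=\|\f\|_p^p$; using the pointwise comparison, the estimate above, and the identity $\|h^{1/p}\|_q=\|h\|_{q/p}^{1/p}$ for $h\ge0$,
\begin{align*}
\|\T(t)\f\|_q & \le\big\|(S(t)g)^{1/p}\big\|_q=\|S(t)g\|_{q/p}^{1/p}\\
& \le\Big(C_{q/p}\,t^{-\frac d2(1-p/q)}\|g\|_1\Big)^{1/p}=C\,t^{-\frac d2\left(\frac1p-\frac1q\right)}\|\f\|_p.
\end{align*}
This is \eqref{hyp_est} on the subspace $C_b(\Rd;\Rm)\cap L^p(\Rd;\Rm)$, which is dense in $L^p(\Rd;\Rm)$ because $p\ge p_0>1$. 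Since $\T(t)=\T_p(t)$ is bounded on $L^p(\Rd;\Rm)$, a routine approximation then completes the proof: if $\f_n\to\f$ in $L^p$ with each $\f_n$ in that subspace, $\T(t)\f_n\to\T(t)\f$ in $L^p$ while, by the estimate applied to $\f_n-\f_m$, the sequence $(\T(t)\f_n)$ is Cauchy in $L^q$; its $L^q$-limit agrees a.e.\ with $\T(t)\f$, hence $\T(t)\f\in L^q(\Rd;\Rm)$ and \eqref{hyp_est} passes to the limit. Reinstating complex-valued data costs only an absolute constant, and the bound, being valid for every $t>0$, holds in particular for $t\in(0,T]$.
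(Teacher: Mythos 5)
Your proposal is correct and follows essentially the same route as the paper: ultracontractivity of the scalar semigroup associated with $\mathcal{A}_v$ via Nash's inequality, self-adjointness, duality and the semigroup law, combined with the pointwise domination $|\T(t)\f|^p\le S(t)|\f|^p$ from Propositions \ref{prop-3} and \ref{prop-5}, and a Riesz--Thorin interpolation. The only (harmless) difference is that you interpolate the scalar semigroup $S(t)$ between $L^1\to L^1$ and $L^1\to L^\infty$ and transfer the whole family of bounds through the identity $\|h^{1/p}\|_q=\|h\|_{q/p}^{1/p}$, whereas the paper first obtains $\T(t):L^p\to L^\infty$ and then interpolates the vector-valued semigroup itself.
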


\begin{proof}
Also in this case we provide the assertion for functions with values in $\R^m$.
For every $p\in [p_0,\infty)$, we consider the contraction semigroup $S_p(t)$ generated by
the operator $\mathcal  A_v$, with domain $D_p(\mathcal A_v):=\{u\in L^p(\Rd)\,\mid\, {\mathcal A}_0u, vu\in L^p(\Rd)\}$,
in $L^p(\Rd)$ (see \cite[Theorem 2.4]{mpps}). The restriction of the semigroup $S_2(t)$ to $L^2(\Rd)\cap L^1(\Rd)$ can be extended to a contraction $C_0$-semigroup $S_1(t)$ on $L^1(\R^d)$ which is consistent with $S_p(t)$ for each $p\in (1,\infty)$.
Indeed, fix $f\in L^2(\Rd)\cap L^1(\Rd)$ and $r>0$. Then, $f\in L^p(\Rd)$ for every $1<p<2$ and, since the semigroups
$S_p(t)$ are consistent, we get
\begin{eqnarray*}
\|S_2(t)f\|_{L^1(B(0,r))}=\lim_{p\to 1^+}\|S_p(t)f\|_{L^p(B(0,r))} \leq \limsup_{p\to 1}\|f\|_{L^p(\Rd)} \leq\|f\|_{L^1(\Rd)}.
\end{eqnarray*}
By letting $r$ tend to $\infty$, we conclude that
the restriction of $S_2(t)$ to $L^2(\Rd)\cap L^1(\Rd)$ extends, by density, to a contraction semigroup $S_1(t)$ on $L^1(\Rd)$.
Moreover, if $f\in C_c^\infty(\Rd)$, then
\begin{eqnarray*}
S_1(t)f-f =S_p(t)f-f=\int_0^t S_p(s)\mathcal A_v f ds,\qquad\;\,t>0.
\end{eqnarray*}
Hence for every $p>1$ it follows that
$\|S_1(t)f-f\|_{L^p(B(0,r))} \leq t\|\mathcal A_vf\|_p$.
Letting first $p$ tend to $1$ and then $r$ tend to $\infty$, we deduce that
$S_1(t)f$ converges to $f$ in $L^1(\Rd)$ as $t$ tends to $0$.
By density it follows that $S_1(t)$ is a strongly continuous semigroup.

On the other hand, $S_2(t)$ is the semigroup associated with the quadratic form
\begin{eqnarray*}
a(f)=\int_{\Rd}\q(f)dx +\int_{\Rd} v|f|^2dx
\end{eqnarray*}
with domain $D(a)=\{f\in L^2(\Rd)\cap W_{\rm loc}^{1,2}(\Rd): |Q^{1/2}\nabla f|, v^{1/2}f\in L^2(\Rd)\}$.
Note that $a(f)\geq \min\{c_0,\mu_0\}\|f\|_{1,2}$ for every $f\in D(a)$.
Since by Nash's inequality
$\|f\|_2^{2+4/d} \leq C_1\|f\|_{1,2}^2\|f\|_1^{4/d}$
for some positive constant $C_1$ and every $f\in W^{1,2}(\Rd)\cap L^1(\Rd)$,
we easily obtain that $\|f\|_2^{2+4/d}\leq K a(f)\|f\|_1^{4/d}$
for some constant $K>0$ , and therefore $S_1(t)$ is bounded from $L^1(\Rd)$ into $L^2(\Rd)$  (see \cite[Theorem 2.4.6]{davies}).
By observing that $\mathcal A_v$ is self-adjoint, the usual duality argument proves that
$S_1(t)$ is bounded from $L^2(\Rd)$ into $L^\infty(\Rd)$ and, by applying the semigroup law, it follows that
$S_1(t)$ is bounded from $L^1(\Rd)$ into $L^\infty(\Rd)$.

Using this property together with the estimate in the statement of Proposition \ref{prop-3} it is immediate to check that $\T_p(t)$ maps $L^p(\Rd;\Rm)$ into
$L^{\infty}(\Rd;\Rm)$ for every $p\ge p_0$ and \eqref{hyp_est} follows in this case.

Finally,
applying Riesz-Thorin interpolation theorem, we conclude the proof.
\end{proof}

\begin{remark}
\label{rmk-final}
{\rm It is worth noticing that, if the assumptions of Proposition $\ref{prop-5}$ are satisfied, with \eqref{drift-cont} replaced by \eqref{new-k}, to be satisfied by every $\kappa>0$, and $\theta<1$, then the semigroup $\T_p(t)$ is defined for every $p>1$ (since, by Remark \ref{forallp}, the assumptions of Theorem \ref{mainth} are satisfied by every $p>1$) and estimate \eqref{hyp_est} holds true for every $p,q\in (1,\infty)$, with $p<q$.}
\end{remark}

\begin{example}
{\rm Let $\A$ be the operator defined in Example \ref{example-1} with a potential matrix $V$ whose entries belong to $C^\alpha_{\rm loc}(\Rd)$ and satisfy \eqref{exp-V}. Assuming further that $A_0\le \sqrt{2m^{-1}}$ if $p \ge 2$ and $pA_0^2m-4(p-1)^2\le 0$ if $p \in (1,2)$, all the assumptions in Hypotheses \ref{hyp_cb} are satisfied. Hypothesis \ref{hyp_cb}(ii) is satisfied, for instance, by the function $\varphi:\Rd\to\R$, defined by $\varphi(x)=1+|x|^2$ for every $x \in \Rd$.
Indeed, since
\begin{eqnarray*}
(\mathcal{A}_v \varphi)(x)= 2\alpha |x|^2(1+|x|^2)^{\frac{\alpha}{2}-1}+ 2d(1+|x|^2)^{\frac{\alpha}{2}}-(1+|x|^2)e^{(1+|x|^2)^\beta}
\end{eqnarray*}
diverges to $-\infty$ as $|x|\to\infty$, clearly we can find a positive constant $\lambda$ such that $\mathcal{A}_v \varphi\le \lambda \varphi$. Further,
\begin{align*}
(\mathcal{A}_v v)(x)= v(x)\big [&2\beta(\alpha+2\beta-2)|x|^2(1+|x|^2)^{\frac{\alpha}{2}+\beta-2}+4\beta^2|x|^2(1+|x|^2)^{\frac{\alpha}{2}+2\beta-2}\\
&+2\beta d(1+|x|^2)^{\frac{\alpha}{2}+\beta-1}-e^{(1+|x|^2)^\beta}\big ]=:v(x)\psi(x)
\end{align*}
for every $x \in \Rd$. Since $\psi(x)$ diverges to $-\infty$ as $|x|\to\infty$,  Hypothesis \ref{hyp_cb}(iv) is satisfied too. Thus, Proposition \ref{prop-5} can be applied to deduce that $\T_p(t)\f=\T_{\infty}(t)\f$ for every $\f\in L^p(\Rd;\C^m)\cap C_b(\Rd;\C^m)$.

Suppose that $(2\gamma+\beta)|\lambda| c_0<1$ (see Example \ref{example-1}) and $\gamma<\alpha/4$. Then the semigroup $\T_p(t)$ exists for every $p\in (1,\infty)$ and all the semigroups are consistent.

Finally, Remark \ref{rmk-final} infers that $\T(t)$ maps $L^p(\Rd;\C^m)$ into $L^q(\Rd;\C^m)$ for every $t>0$ and $1< p\le q\le\infty$ and estimate \eqref{hyp_est} holds true.}
\end{example}

\appendix

\section{}

\small

\subsection{Derivation of condition \eqref{pdis}}
\label{app-a.1}

We need to compute the supremum of the function $f_2$ in \eqref{funct-f2} in the set $\Omega=\{(\varepsilon_1,\varepsilon_2,\varepsilon_3,\varepsilon_4)\in (0,\infty)^4: g_2(\varepsilon_1,\varepsilon_2,\varepsilon_3,\varepsilon_4)>0\}$, where $g_2$ is defined by \eqref{funct-g2}.
To simplify the notation, we write $f_2(\varepsilon_1,\varepsilon_2,\varepsilon_3,\varepsilon_4)=A-B\varepsilon_1^{-1}-C\varepsilon_2^{-1}-D\varepsilon_3^{-1}-D\varepsilon_4^{-1}$ and $g_2(\varepsilon_1,\varepsilon_2,\varepsilon_3,\varepsilon_4)=E-\varepsilon_1-F\varepsilon_2-G\varepsilon_3-H\varepsilon_4$, where
\begin{align*}
\begin{array}{lllll}
&A=1-\frac{\theta}{p}-\frac{p-1}{p}\kappa m\gamma,\qquad\;\, &B=\frac{\gamma^2}{4},\qquad\;\, &C=\gamma^2\frac{|p-2|}{4},\qquad\;\,&D=\frac{\kappa m|p-2|}{4p}, \\[1mm]
&E=p-1,\qquad\;\,& F=2-p,\qquad\;\,&G=\kappa\frac{2-p}{p},\qquad\;\,&H=\frac{2-p}{p}\kappa m.
\end{array}
\end{align*}
From the constrain $g_2(\varepsilon_1,\varepsilon_2,\varepsilon_3,\varepsilon_4)>0$, we deduce that $\varepsilon_1<E-F\varepsilon_2-G\varepsilon_3-H\varepsilon_4$. Therefore, taking also into account that $\varepsilon_1>0$, we deduce that
\begin{eqnarray*}
f_2(\varepsilon_1,\varepsilon_2,\varepsilon_3,\varepsilon_4)<\widetilde f_2(\varepsilon_2,\varepsilon_3,\varepsilon_4):=A-\frac{B}{E-F\varepsilon_2-G\varepsilon_3-H\varepsilon_4}-\frac{C}{\varepsilon_2}-\frac{D}{\varepsilon_3}-\frac{D}{\varepsilon_4}
\end{eqnarray*}
for every $(\varepsilon_1,\varepsilon_2,\varepsilon_3,\varepsilon_4)\in\Omega$.
So, we consider the function $\widetilde f_2$ on the set $\widetilde\Omega=\{(\varepsilon_2,\varepsilon_3,\varepsilon_4)\in (0,\infty)^3: F\varepsilon_2+G\varepsilon_3+H\varepsilon_4<E\}$. Since $\widetilde f_2(\varepsilon_2,\varepsilon_3,\varepsilon_4)$ diverges to $-\infty$ as $(\varepsilon_2,\varepsilon_3,\varepsilon_4)$ approaches
the points of the boundary of $\widetilde\Omega$, this function has a maximum in $\widetilde\Omega$. Imposing that its gradient vanishes, we obtain the system
\begin{eqnarray*}
\left\{
\begin{array}{l}
\varepsilon_2=(E-F\varepsilon_2-G\varepsilon_3-H\varepsilon_4)\sqrt{\frac{C}{BF}},\\[2mm]
\varepsilon_3=(E-F\varepsilon_2-G\varepsilon_3-H\varepsilon_4)\sqrt{\frac{D}{BG}},\\[2mm]
\varepsilon_4=(E-F\varepsilon_2-G\varepsilon_3-H\varepsilon_4)\sqrt{\frac{D}{BH}}
\end{array}
\right.
\Longleftrightarrow\left\{
\begin{array}{l}
\varepsilon_2=(E-F\varepsilon_2-G\varepsilon_3-H\varepsilon_4)\sqrt{\frac{C}{BF}},\\[2mm]
\varepsilon_3=\sqrt{\frac{DF}{CG}}\varepsilon_2,\\[2mm]
\varepsilon_4=\sqrt{\frac{DF}{CH}}\varepsilon_2.
\end{array}
\right.
\end{eqnarray*}
We conclude that the unique stationary point of $\widetilde f_2$ is the point $(\overline{\varepsilon}_2,\overline{\varepsilon}_3,\overline{\varepsilon}_4)$, where
\begin{align*}
&\overline{\varepsilon}_2=\frac{E\sqrt{\frac{C}{F}}}{\sqrt{B}+\sqrt{CF}+\sqrt{DG}+\sqrt{DH}},
\qquad\;\,\overline{\varepsilon}_3=\frac{E\sqrt{\frac{D}{G}}}{\sqrt{B}+\sqrt{CF}+\sqrt{DG}+\sqrt{DH}},\\
&\overline{\varepsilon}_4=\frac{E\sqrt{\frac{D}{H}}}{\sqrt{B}+\sqrt{CF}+\sqrt{DG}+\sqrt{DH}},
\end{align*}
i.e.,
\begin{align}
\widetilde f_2(\overline{\varepsilon}_2,\overline{\varepsilon}_3,\overline{\varepsilon}_4)=&A-\frac{1}{E}(\sqrt{B}+\sqrt{CF}+\sqrt{DG}+\sqrt{DH})^2,
\label{corona}
\end{align}
which is the left-hand side of \eqref{pdis}.
Since $f_2$ is continuous at the point $(\overline \varepsilon_1,\overline{\varepsilon}_2,\overline{\varepsilon}_3,\overline{\varepsilon}_4)$, where $\overline \varepsilon_1=E-F\overline{\varepsilon}_2-G\overline{\varepsilon}_3-H\overline{\varepsilon}_4$, and
$\widetilde f_2(\overline{\varepsilon}_2,\overline{\varepsilon}_3,\overline{\varepsilon}_4)=\lim_{\varepsilon_1\to \overline{\varepsilon}_1}f_2(\varepsilon_1,\overline{\varepsilon}_2,\overline{\varepsilon}_3,\overline{\varepsilon}_4)$,
the second line of \eqref{corona} is also the supremum in $\Omega$ of function $f_2$.

\subsection{Derivation of condition \eqref{pdis-1}}
\label{app-a.2}

We still need to consider function $f_2$ but in the new set $\Omega=\{(\varepsilon_1,\varepsilon_2,\varepsilon_3,\varepsilon_4)\in (0,\infty)^4: E-E\varepsilon_1-F\varepsilon_3>0,\ E-E\varepsilon_2-G\varepsilon_4>0\}$, where now
$E=p$, $F=\kappa (p-2)$ and $G=\kappa m$.
Since $0<\varepsilon_1<1-E^{-1}F\varepsilon_3$ and $0<\varepsilon_2<1-E^{-1}G\varepsilon_4$, it follows that
\begin{eqnarray*}
f_2(\varepsilon_1,\varepsilon_2,\varepsilon_3,\varepsilon_4)<A-\frac{BE}{E-F\varepsilon_3}-\frac{CE}{E-G\varepsilon_4}-\frac{D}{\varepsilon_3}-\frac{D}{\varepsilon_4}=:\widehat f_2(\varepsilon_3,\varepsilon_4)
\end{eqnarray*}
for every $(\varepsilon_1,\varepsilon_2,\varepsilon_3,\varepsilon_4)\in\Omega$. We consider function $\widehat f_2$ in $\Omega'=(0,F^{-1}E)\times (0,G^{-1}E)$, which diverges to $-\infty$ as $(\varepsilon_3,\varepsilon_4)$ approaches the boundary of $\Omega'$. Hence, it has a maximum in $\Omega'$, which is attained at the unique stationary point $(\overline{\varepsilon}_3,\overline{\varepsilon}_4)$, where
\begin{eqnarray*}
\overline{\varepsilon}_3=\sqrt{\frac{D}{F}}\frac{E}{\sqrt{BE}+\sqrt{DF}},\qquad\;\,\overline{\varepsilon}_4=\sqrt{\frac{D}{G}}\frac{E}{\sqrt{CE}+\sqrt{DG}},
\end{eqnarray*}
and
\begin{align}
\widehat f_2(\overline \varepsilon_3,\overline \varepsilon_4)=&A-B-2\sqrt{\frac{BDF}{E}}-\frac{DF}{E}-C-2\sqrt{\frac{CDG}{E}}-\frac{DG}{E},
\label{corona-2}
\end{align}
which is the left-hand side of \eqref{pdis-1}.
Since $f_2$ is continuous at the point $(\overline \varepsilon_1,\overline \varepsilon_2,\overline \varepsilon_3,\overline \varepsilon_4)$, where $\overline \varepsilon_1=1-E^{-1}F\overline \varepsilon_3$, $\overline \varepsilon_2=1-E^{-1}G\overline \varepsilon_4$ and
$\widehat f_2(\overline{\varepsilon}_3,\overline{\varepsilon}_4)=\lim_{(\varepsilon_1,\varepsilon_2)\to (\overline \varepsilon_1,\overline \varepsilon_2)}f_2(\varepsilon_1,\varepsilon_2,\overline{\varepsilon}_3,\overline{\varepsilon}_4)$, we conclude that
the second line of \eqref{corona-2} is also the supremum in $\Omega$ of function $f_2$.

\subsection{On the inequality \eqref{last}}
\label{app-a.3}
In this subsection, we prove in details that we can choose the parameters $\varepsilon_i$ ($i=0,\ldots,3$) in such a way that the first three terms in the right-hand side of \eqref{last} are all nonpositive.
Equivalently, we just need to prove that these parameters can be fixed to guarantee that $f_1(\varepsilon_1,\varepsilon_3)>0$, $1-\varepsilon_1\kappa (p-2)p^{-1}\ge 0$ and $p-\kappa m\varepsilon_3>0$.
The better choice of $\varepsilon_1$ is $\varepsilon_1=p[\kappa(p-2)]^{-1}$. Replacing this value in $f_1$, we get a new function $\widetilde f_1$, which is continuous and increasing in the domain $(0,p(\kappa m)^{-1})$.
Therefore its supremum in the interval $(0,p(\kappa m)^{-1})$ is the limit at $p(\kappa m)^{-1}$, which is
$1-\frac{\theta}{p}-\frac{\kappa^2m(p-2)}{4p^2}(m+p-2)$, and it is positive due to condition \eqref{pdis-1}.

\subsection{On the inequality \eqref{last-1}}
\label{app-a.4}
Here, we prove in details that the free parameters in \eqref{last-1} can be chosen to guarantee that the first two terms in the right-hand side of \eqref{last-1} are all nonpositive.
Of course, we just need to show that we can choose the positive parameters $\varepsilon_1$ and $\varepsilon_3$ in such a way that $f_2(\varepsilon_1,\varepsilon_3)$ and $g_1   (\varepsilon_1,\varepsilon_3)$ are both nonnegative.
(Here, $f_2$ is the same function defined in \eqref{funct-f2}, where now $\gamma=0$, and $g_1$ is defined by \eqref{funct-g1}.)
For this purpose, we compute the supremum of the function $f_2$ on the set $\Omega=\{(\varepsilon_1,\varepsilon_3)\in (0,\infty)^2: g_1(\varepsilon_1,\varepsilon_3)>0\}$. We set $E=m$, $G=p(p-1)[(2-p)\kappa]^{-1}$, so that $g_1(\varepsilon_1,\varepsilon_3)>0$  is equivalent to $\varepsilon_1+F\varepsilon_3<G$. Therefore, $0<\varepsilon_1<G-F\varepsilon_3$ and
\begin{eqnarray*}
f_2(\varepsilon_1,\varepsilon_3)\le A'-\frac{B'}{\varepsilon_3}-\frac{B'}{C'-m\varepsilon_3}=:\overline f_2(\varepsilon_3)
\end{eqnarray*}
for every $(\varepsilon_1,\varepsilon_3)\in\Omega$, where
\begin{eqnarray*}
A'=1-\frac{\theta}{p},\qquad\;\,B'=\frac{\kappa m(2-p)}{4p},\qquad\;\, C'-\frac{p(p-1)}{(2-p)\kappa}.
\end{eqnarray*}
We consider function $\overline f_2$ in the interval $(0,m^{-1}C)$. Since it diverges to $-\infty$ as $\varepsilon_3$ tends to $0$ and $m^{-1}C$, function $\overline f_2$ has a maximum. Computing the first-order derivative
of $\overline f_2$, we easily realize that such a maximum is attained at $\overline \varepsilon_3=c(m+\sqrt{m})^{-1}$ and its value is
\begin{eqnarray*}
A'-\frac{B'}{C'}(\sqrt{m}+1)^2=1-\frac{\theta}{p}-\frac{\kappa^2(2-p)^2}{4p^2(p-1)}(\sqrt{m}+m)^2,
\end{eqnarray*}
which is positive due to condition \eqref{pdis}.

If we set $\overline\varepsilon_1=G-F\overline\varepsilon_3$ and observe that $f_2$ is continuous at $(\overline\varepsilon_1,\overline\varepsilon_3)$, then we conclude that $\lim_{\varepsilon_1\to\overline \varepsilon_1}f_2(\varepsilon_1,\overline \varepsilon_3)=\overline f_2(\overline\varepsilon_3)$.
Moreover, $g_1(\overline \varepsilon_1,\overline \varepsilon_3)=0$. These two remarks show that we can choose $\varepsilon_1, \varepsilon_3\in (0,\infty)$ sufficiently small such that $f_2(\varepsilon_1,\varepsilon_3)>0$
and $g_1(\varepsilon_1,\varepsilon_3)>0$.

\normalsize

\end{document}